\newcommand{\CC}{\mathbb{C}}
\newcommand{\ZZ}{\mathbb{Z}}
\newcommand{\NN}{\mathbb{N}}
\newcommand{\RR}{\mathbb{R}}
\newcommand{\PP}{\mathbb{P}}
\newcommand{\TT}{\mathbb{T}}
\newcommand{\calC}{\mathcal{C}}
\newcommand{\calF}{\mathcal{F}}
\newcommand{\calO}{\mathcal{O}}
\newcommand{\calB}{\mathcal{B}}
\newcommand{\calA}{\mathcal{A}}
\newcommand{\calD}{\mathcal{D}}
\newcommand{\calP}{\mathcal{P}}
\newcommand{\calQ}{\mathcal{Q}}
\newcommand{\bfA}{\mathbf{A}}
\newcommand{\scrU}{\mathscr{U}}
\newcommand{\scrA}{\mathscr{A}}
\newcommand{\scrH}{\mathscr{H}}
\newcommand{\scrM}{\mathscr{M}}
\newcommand{\frkH}{\mathfrak{h}}
\newcommand{\frkM}{\mathfrak{m}}
\newcommand{\frkY}{\mathfrak{y}}
\newcommand{\lcm}{\operatorname{lcm}}
\newcommand{\pl}[1]{\foreignlanguage{polish}{#1}}
\newcommand{\norm}[1]{\lvert {#1} \rvert}
\newcommand{\abs}[1]{\left\lvert {#1} \right\rvert}
\newcommand{\sprod}[2]{{#1}\cdot {#2}}
\newcommand{\ind}[1]{{\mathds{1}_{{#1}}}}
\renewcommand{\atop}[2]{\genfrac{}{}{0pt}2{#1}{#2}}
\newtheorem{theorem}{Theorem}
\newtheorem{proposition}{Proposition}[section]
\newtheorem{lemma}{Lemma}
\newtheorem{claim}{Claim}
\newtheorem*{theorem*}{Theorem}
\numberwithin{equation}{section}
\newcounter{thm}
\newtheorem{main_theorem}[thm]{Theorem}
\theoremstyle{definition}
\title[Variational estimates for discete operators]
{Variational estimates for discrete operators modeled on multi-dimensional polynomial subsets of primes}
\author{Bartosz Trojan}
\address{
	\pl{
	Bartosz Trojan\\
	The Institute of Mathematics\\
	Polish Academy of Science\\
	ul. \'Sniadeckich 8\\
	00-696 Warszawa\\
	Poland}
}
\email{btrojan@impan.pl}
\begin{document}
\selectlanguage{english}

\begin{abstract}
	We prove the extensions of Birkhoff's and Cotlar's ergodic theorems to multi-dimensional polynomial
	subsets of prime numbers $\PP^k$. We deduce them from $\ell^p\big(\ZZ^d\big)$-boundedness of $r$-variational
	seminorms for the corresponding discrete operators of Radon type, where $p > 1$ and $r > 2$.
\end{abstract}

\maketitle

\section{Introduction}
Let $(X, \calB, \mu)$ be a $\sigma$-finite measure space with $d_0$ invertible commuting and measure
preserving transformations $T_1, \ldots, T_{d_0}: X \rightarrow X$. Let
$\calP = \big(\calP_1, \ldots, \calP_{d_0}\big):\RR^k \rightarrow \RR^{d_0}$ denote a polynomial mapping such that
each $\calP_j$ is a polynomial on $\RR^k$ having integer coefficients without a constant term. Let $B$ be an open
bounded convex subset in $\RR^k$ containing the origin such that for some $\iota > 0$ and all $N \in \NN$,
\begin{equation}
	\label{eq:151}
	[-\iota N, \iota N]^k \subseteq B_N \subseteq [-N, N]^k,
\end{equation}
where for $\lambda > 0$, we have set
\[
	B_\lambda = \left\{x \in \RR^k : \lambda^{-1} x \in B\right\}.
\]
In this paper we consider the following averages
\[
	\scrA_N^\calP f(x) = \frac{1}{\pi_B(N)} \sum_{n \in \NN^{k'}} \sum_{p \in \PP^{k''}}
	f\Big(T_1^{\calP_1(n, p)} \cdots T_{d_0}^{\calP_{d_0}(n, p)} x\Big) \ind{B_N}(n, p),
\]
where $k = k'+k''$, $\PP$ denotes the set of prime numbers, and
\[
	\pi_B(N) = \sum_{n \in \NN^{k'}} \sum_{p \in \PP^{k''}} \ind{B_N}(n, p).
\]
One of the results of this article establishes the following theorem.
\begin{main_theorem}
	\label{main_thm:1}
	Assume that $p \in (1, \infty)$. For every $f \in L^p(X, \mu)$ there exists $f^* \in L^p(X, \mu)$
	such that
	\[
		\lim_{N \to \infty} \scrA^\calP_N f(x) = f^*(x),
	\]
	for $\mu$-almost all $x \in X$.
\end{main_theorem}
Sums over prime numbers are irregular, thus it is more convenient to work with weighted averaging operators,
\[
	\scrM_N^\calP f(x) = \frac{1}{\vartheta_B(N)} 
	\sum_{n \in \NN^{k'}} \sum_{p \in \PP^{k''}} 
	f\left(T_1^{\calP_1(n, p)} \cdots T_{d_0}^{\calP_{d_0}(n, p)} x\right) \ind{B_N}(n, p)
	\bigg(\prod_{j = 1}^{k''} \log p_j\bigg),
\]
where
\[
	\vartheta_B(N) = \sum_{n \in \NN^{k'}} \sum_{p \in \PP^{k''}} \ind{B_N}(n, p) 
	\bigg(\prod_{j = 1}^{k''} \log p_j\bigg).
\]
Then the pointwise convergence of $(\scrA_N f : N \in \NN)$ can be deduced from the properties of 
$(\scrM_N f : N \in \NN)$, see Proposition \ref{prop:3} for details.

Next to the averaging operators we also study pointwise convergence of truncated discrete singular operators. To be more
precise, let $K \in \calC^1\big(\RR^k \setminus \{0\} \big)$ be a Calder\'on--Zygmund kernel satisfying the differential
inequality
\begin{equation}
	\label{eq:23}
	\norm{x}^k \abs{K(x)} + \norm{x}^{k+1} \norm{\nabla K(x)} \leq 1,
\end{equation}
for all $x \in \RR^k$ with $\norm{x} \geq 1$, and the cancellation condition
\begin{equation}
	\label{eq:24}
	\int_{B_{\lambda} \setminus B_{\lambda'}} K(x) {\: \rm d}x = 0,
\end{equation}
for every $0 < \lambda' \leq \lambda$. Then the truncated discrete singular operator $\scrH^\calP_N$ is defined as
\[
        \scrH^\calP_N f(x) = \sum_{n \in \ZZ^{k'}} \sum_{p \in (\pm \PP)^{k''}}
        f\Big(T_1^{\calP_1(n, p)} \cdots T_{d_0}^{\calP_{d_0}(n, p)} x\Big)
		K(n, p) \ind{B_N}(n, p) 
		\bigg(\prod_{j = 1}^{k''} \log \abs{p_j}\bigg). 
\] 
The logarithmic weights in $\scrM_N^\calP$ and $\scrH_N^\calP$ correspond to the density of prime numbers. 
In this article we prove the following theorem, which may be thought as an extension of Cotlar's ergodic theorem, 
see \cite{cot}. 
\begin{main_theorem}
	\label{main_thm:2}
	Assume that $p \in (1, \infty)$. For every $f \in L^p(X, \mu)$ there exists $f^* \in L^p(X, \mu)$ such that
	\[
		\lim_{N \to \infty} \scrH^\calP_N f(x) = f^*(x),
	\]
	for $\mu$-almost all $x \in X$.
\end{main_theorem}
The classical approach to the pointwise convergence in $L^p(X, \mu)$ proceeds in two steps. Namely, one needs
to show $L^p(X, \mu)$ boundedness of the corresponding maximal function reducing the problem to showing the convergence
on some dense class of $L^p(X, \mu)$ functions. However, finding such a class may be a difficult task. This is the case
of one dimensional averages along $(n^2 : n \in \NN)$ studied by Bourgain in \cite{bou}. To overcome this issue
Bourgain introduced the oscillation seminorm defined for a given lacunary sequence $(N_j : j \in \NN)$ and a sequence 
of complex numbers $(a_n : n \in \NN)$ as
\[
	\calO_J(a_n : n \in \NN)
	=
	\Big(\sum_{j = 1}^J \sup_{N_j \leq n \leq N_{j+1}} \abs{a_n - a_{N_j}}^2 \Big)^{1/2}.
\]
Then the pointwise convergence of $(\scrA_N f : N \in \NN)$ is reduced to showing that
\[
	\big\|\calO_J\big(\scrA_N f : N \in \NN\big)\big\|_{L^2} = o\big(J^{1/2}\big),
\]
while $J$ tends to infinity. In place of the oscillation seminorm, we investigate $r$-variational seminorm.
Let us recall that $r$-variational seminorm of a sequence $(a_n : n\in \NN)$ is defined by
\[
	V_r(a_n : n \in \NN) = \sup_{k_0 < k_2 < \ldots < k_J}
	\Big(\sum_{j = 1}^J \abs{a_{k_j} - a_{k_{j-1}}}^r\Big)^{1/r}.
\]
In fact, $r$-variational seminorm controls $\calO_J$ as well as the maximal function. Indeed, for any $r \geq 2$,
by H\"older's inequality we have
\[
	\calO_J(a_n : n \in \NN) \leq J^{\frac{1}{2}-\frac{1}{r}} V_r(a_n : n \in \NN).
\]
Moreover, for any $n_0 \in \NN$,
\[
	\sup_{n \in \NN} |a_n| \leq |a_{n_0}| + V_r(a_n : n \in \NN).
\]
Nevertheless, the main motivation to study $L^p(X, \mu)$ boundedness of $r$-variational seminorm is the following
observation: if $V_r(a_n : n \in \NN) < \infty$ for any $r \geq 1$ then the sequence $(a_n : n \in \NN)$ converges.
Therefore, we can deduce Theorem \ref{main_thm:1} and Theorem \ref{main_thm:2} from the following result.
\begin{main_theorem}
	\label{main_thm:4}
	For every $p \in (1, \infty)$ there is $C_p > 0$ such that for all $r \in (2, \infty)$ and all $f \in L^p(X, \mu)$,
	\begin{equation}
		\label{eq:141a}
		\big\|V_r\big(\scrM_N^\calP f : N \in \NN\big)\big\|_{L^p} \leq C_p \frac{r}{r-2} \|f\|_{L^p},
	\end{equation}
	and
	\begin{equation}
		\label{eq:142b}
		\big\|V_r\big(\scrH_N^\calP f : N \in \NN\big)\big\|_{L^p} \leq C_p \frac{r}{r-2} \|f\|_{L^p}.
	\end{equation}
	The constant $C_p$ is independent of the coefficients of the polynomial mapping $\calP$.
\end{main_theorem}
The variational estimates for discrete averaging operators have been the subject of many papers, see 
\cite{jkrw, K, mst2, msz-k, mt3, mtz, zk}. In \cite{K}, Krause studied the case $d_0 = k = k' = 1$ and has obtained
the inequality \eqref{eq:141a} for $p \in (1, \infty)$ and $r > \max\{p, p'\}$. On the other hand, Zorin-Kranich in 
\cite{zk} for the same case obtained \eqref{eq:141a} for all $r \in (2, \infty)$ but for $p$ in some vicinity of $2$.
Only recently in \cite{mst2} the variational estimates have been established in the full range of parameters, that is
$p \in (1, \infty)$ and $r \in (2, \infty)$, covering the case $k'' = 0$. In \cite{zk}, Zorin-Kranich has proved
\eqref{eq:141a} also for the averaging operators modeled on prime numbers, that is when $d_0 = k = k '' = 1$ with
a polynomial $P(n) = n$. It is worth mentioning that the variational estimates for discrete operators are based on
\emph{a priori} estimates for their continuous counterparts developed in \cite{jsw}, see also \cite[Appendix]{mst2}.

The variational estimates for discrete singular operators have been studied in \cite{cjrw, mst2, msz-k,mtz}. In \cite{mtz},
the authors obtained the inequality \eqref{eq:142b}, for the truncated Hilbert transform modeled on prime numbers, which
corresponds to $d_0 = k = k'' = 1$ and a polynomial $P(n) = n$. In fact, discrete singular operators of Radon type
required a new approach. An important milestone has been laid by Ionescu and Wainger in \cite{iw}. Ultimately,
the complete development of the discrete singular operators of Radon type has been obtained in \cite{mst2}.

Concerning pointwise ergodic theorems over prime numbers, there are some results using oscillation seminorms.
In \cite{bou-p}, Bourgain has shown pointwise convergence for the averages along prime numbers for functions from
$L^2(X, \mu)$. Then his result was extended to all $L^p(X, \mu)$, $p > 1$, by Wierdl in \cite{wrl}, see also
\cite[Section 9]{bou}. Not long afterwards, Nair in \cite{na1} has proved Theorem \ref{main_thm:1} for $L^2(X, \mu)$,
$d_0 = k = k'' = 1$, and any integer-valued polynomial. Nair also studied ergodic averages for functions in $L^p(X, \mu)$
for $p \neq 2$, however, \cite[Lemma 14]{na2} contains an error. In fact, the estimates on the
multipliers $W_N$ are insufficient to show that the sum considered at the end of the proof has bounds independent of
$\abs{\alpha-a/b}$. Lastly, the extension of Cotlar's ergodic theorem to prime numbers has been established in \cite{mt2},
see also \cite{mtz}.

In view of the Calder\'on transference principle, while proving Theorem \ref{main_thm:4}, we may work with
the model dynamical system, namely, $\ZZ^{d_0}$ with the counting measure and the shift operators. Let us denote by
$M_N^\calP$ and $H_N^\calP$, the corresponding operators, namely,
\begin{equation}
	\label{eq:131}
	M_N^\calP f(x) = 
	\frac{1}{\vartheta_B(N)}
	\sum_{n \in \NN^{k'}} \sum_{p \in \PP^{k''}}
	f\big(x - \calP(n, p)\big)
	\ind{B_N}(n, p) 
	\bigg(\prod_{j = 1}^{k''} \log p_j\bigg),
\end{equation}
and
\begin{equation}
	\label{eq:132}
	H^\calP_N f(x) = \sum_{n \in \ZZ^{k'}} \sum_{p \in (\pm \PP)^{k''}}
	f\big(x - \calP(n, p)\big) K(n, p) \ind{B_N}(n, p) \bigg(\prod_{j = 1}^{k''} \log \abs{p_j}\bigg).
\end{equation}
We now give some details about the method of the proof of Theorem \ref{main_thm:4} for the model dynamical system. 
To simplify the exposition we restrict attention to the averaging operators. Let us denote by $\frkM_N$ the discrete
Fourier multiplier corresponding to $M_N^\calP$. To deal with $r$-variational estimates we apply the method recently
used \cite{msz-k}, see also \cite{zk}. Namely, given $\rho \in (0, 1)$ we consider the set
$\calD_\rho = \{N_n : n \in \NN\}$, where $N_n = \big\lfloor 2^{n^\rho}\big\rfloor$. Then in view of \eqref{eq:90}
we can split the $r$-variation into two parts: long variations and short variations, and study them separately. 
For each $p \in (1, \infty)$ we can choose $\rho$ so that the estimate for $\ell^p$-norm of short variations is 
straightforward. Next, to control long variations we adopt the partition of unity constructed in \cite{mst2}, that is
\[
	1 =
	\sum_{s = 0}^{n-1} \Xi_{n, s}^{\beta}
	+ \Big(1-\sum_{s = 0}^{n-1} \Xi_{n, s}^{\beta}\Big),
\]
for some parameter $\beta \in \NN_0$. Each projector $\Xi_{n, s}^{\beta}$ is supported by a finite union of disjoint cubes
centered at rational points belonging to $\mathscr{R}_s^\beta$. In this way, we distinguish the part of the multiplier
where we can identify the asymptotic from the highly oscillating piece. The oscillating part is controlled by a
multi-dimensional version of Weyl--Vinogradov's inequality with a logarithmic loss together with $\ell^p\big(\ZZ^d\big)$
estimates for multipliers of Ionescu--Wainger type. By the triangle inequality, to control the first part it is enough to
show
\begin{equation}
	\label{eq:34}
    \big\|
    V_r\big(\calF^{-1}\big(\frkM_{N_n} \Xi_{n, s}^\beta \hat{f} \big) : n > s \big)
    \big\|_{\ell^p}
    \leq C_p (s+1)^{-2} \|f\|_{\ell^p}.
\end{equation}
First, by the circle method of Hardy and Littlewood, we find the asymptotic of the multiplier $\frkM_{N_n}$.
Here we encounter the main difference from \cite{mst2}. Namely, for $\xi$ sufficiently close to the rational point $a/q$
we have
\begin{equation}
	\label{eq:20}
	\frkM_{N_n}(\xi) = G(a/q) \Phi_{N_n}(\xi - a/q) + \calO\Big(\exp\big(-c\sqrt{\log N_n}\big)\Big),
\end{equation}
provided that $1 \leq q \leq (\log N_n)^{\beta'}$, where $G(a/q)$ is the Gaussian sum and $\Phi_{N_n}$ 
is an integral version of $\frkM_{N_n}$. The limitation on the size of the denominator is a consequence of the fact that
for a larger $q$ the Siegel--Walfisz theorem has an additional term due to the possible exceptional
zero of the exceptional quadratic character. The second issue is the slower decay of the error term in \eqref{eq:20}.
In particular, the later has its impact on the size of the cubes in the partition of unity. Both facts made the
analysis of the approximating multipliers $\nu_{N_n}^s$ harder. To overcome this we directly work with $\frkM_N$.
Moreover, we get completely unified approach to the variational estimates for the averaging operators and the truncated
discrete singular operators.

Going back to the sketch of the proof, in order to show \eqref{eq:34}, we divide the variation into two parts:
$s < n \leq 2^{\kappa_s}$ and $2^{\kappa_s} < n$, where $\kappa_s \simeq (s+1)^{\rho/10}$. For large scales
$2^{\kappa_s} < n$, we transfer \emph{a priori} estimates on $L^p$-norm for $r$-variation of the related continuous
multipliers. Since the Gaussian sums satisfies $\abs{G(a/q)} \lesssim q^{-\delta}$ for some $\delta > 0$, we gain
a decay $(s+1)^{-\delta\beta\rho}$ on $\ell^2$. Consequently, by interpolation the $\ell^p$ norm of $r$-variation for large
scales is bounded by $(s+1)^{-2}$ provided that $\beta$ is sufficiently large. In the case of small scales
$s < n \leq 2^{\kappa_s}$, the estimate on $\ell^2$ is obtained with a help of the numerical inequality \eqref{eq:32}.
We again show that $\ell^2$ norm is bounded by $(s+1)^{-\delta \beta \rho + 1}$. Because of the weaker asymptotic
\eqref{eq:20}, to obtain $\ell^p$ bounds for $r$-variations over small scales required a new approach. We further divide
the index set into dyadic blocks, then on each block we construct a good approximation to the multiplier giving bounds
on $\ell^p$ norm independent of the block. At the cost of additional factor of $\kappa_s^2$, we control $\ell^p$ norm of
$r$-variation. Again, by interpolation combined with a choice of $\beta$ large enough we can make the $\ell^p$ norm
bounded by $(s+1)^{-2}$.

Let us briefly describe the structure of the article. In Section \ref{sec:2.1} we collect basic properties of the
variational seminorm. In Section \ref{sec:3}, we show how to deduce Theorem \ref{main_thm:1} from $r$-variational 
estimates \eqref{eq:141a} and \eqref{eq:142b}. Then we present the lifting procedure, which allows us to replace any
polynomial mapping $\calP$ by a canonical one $\calQ$. In the next section, we describe multipliers of Ionescu--Wainger
type whose $\ell^p$ norm estimates are essential to our argument. In Section \ref{sec:4}, we show a multi-dimensional
version of Weyl--Vinogradov's inequality with a logarithmic loss. Moreover, we prove the estimate on the Gaussian sums
of a mixed type. Sections \ref{sec:8} and \ref{sec:9} are devoted to study the asymptotic behavior of multipliers
$M_N$ and $H_N$, respectively. Finally, to get completely unified approach to the variational estimates for the averaging
operators and truncated singular operators, at the beginning of Section \ref{sec:7}, we list the properties shared by
them which are sufficient to prove Theorem \ref{main_thm:4}. In the next two sections we show the estimates on long and
short variations.

\subsection*{Notation}
Throughout the whole article, we write $A \lesssim B$ ($A \gtrsim B$) if there is an absolute constant $C>0$ such that
$A\le CB$ ($A\ge CB$). Moreover, $C$ stand for a large positive constant
whose value may vary from occurrence to occurrence. If $A \lesssim B$ and $A\gtrsim B$ hold simultaneously then we write
$A \simeq B$. Lastly, we write $A \lesssim_{\delta} B$ ($A \gtrsim_{\delta} B$) to indicate that the constant $C$
depends on some $\delta > 0$. Let $\NN_0 = \NN \cup \{0\}$. For a vector $x \in \RR^d$, we set
$\norm{x}_\infty = \max\{|x_j| : 1 \leq j \leq d\}$. Given a subset $A \subseteq \ZZ$ and $x \in \RR$, we set 
$A_x = A \cap [0, x]$.

\section{Preliminaries}

\subsection{Variational norm}
\label{sec:2.1}
Let $r \in [1, \infty)$. For a sequence $(a_j : j \in A)$, $A \subseteq \ZZ$, we define $r$-variational seminorm by
\[
	V_r(a_j : j \in A) = \sup_{\atop{k_0 < \ldots < k_J}{k_j \in A}}
	\bigg(\sum_{j = 1}^J \abs{a_{k_j} - a_{k_{j-1}}}^r \bigg)^{\frac{1}{r}}.
\]
The function $r \mapsto V_r(a_j : j \in A)$ is non-decreasing, thus
\[
	V_r(a_j : j \in A) \leq V_1(a_j : j \in A),
\]
and by Minkowski's inequality
\[
	V_r(a_j : j \in A) \leq 2 \Big(\sum_{j \in A} \abs{a_j}^r\Big)^{\frac{1}{r}}.
\]
Moreover, for any $j_0 \in A$,
\begin{equation}
	\label{eq:94}
	\sup_{j \in A} \abs{a_j} \leq V_r(a_j : j \in A) + \abs{a_{j_0}},
\end{equation}
Finally, for any increasing sequence $(u_k : 0 \leq k \leq K)$, we have
\begin{equation}
	\label{eq:162}
	V_r(a_j : u_0 \leq j \leq u_K) \leq 
	K^{1-1/r} \Big( \sum_{k = 1}^K V_r(a_j : u_{k-1} \leq j \leq u_k)^r\Big)^\frac{1}{r}.
\end{equation}
The following lemma is essential in studying variational seminorms.
\begin{lemma}{\cite[Lemma 1]{mt3}}
	\label{lem:1}
	If $r \geq 2$ then for any sequence $(a_j : 0 \leq j \leq 2^s)$ of complex numbers
	\begin{equation}
		\label{eq:32}
		V_r(a_j : 0 \leq j \leq 2^s) \leq \sqrt{2} \sum_{i = 0}^s 
		\bigg(\sum_{j = 0}^{2^{s-i}-1} \big|a_{(j+1)2^i} - a_{j2^i}\big|^2\bigg)^{1/2}.
	\end{equation}
\end{lemma}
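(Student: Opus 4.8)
The plan is to prove \eqref{eq:32} by a dyadic decomposition argument. Fix $r \geq 2$. Given any increasing sequence of indices $k_0 < k_1 < \cdots < k_J$ in $\{0, 1, \ldots, 2^s\}$, I want to bound $\big(\sum_{j=1}^J |a_{k_j} - a_{k_{j-1}}|^r\big)^{1/r}$. Since $r \geq 2$, we have the elementary embedding $\ell^2 \hookrightarrow \ell^r$, so it suffices to bound the $\ell^2$-sum $\big(\sum_{j=1}^J |a_{k_j} - a_{k_{j-1}}|^2\big)^{1/2}$; in fact I would first reduce to showing the inequality with $V_2$ on the left-hand side, i.e.\ prove it for $r = 2$, and the general case follows since $V_r \leq V_2$ for $r \geq 2$.

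The key step is the following telescoping-over-scales observation. For each index $m \in \{0, 1, \ldots, 2^s\}$ and each scale $i \in \{0, 1, \ldots, s\}$, let $\pi_i(m) = 2^i \lfloor m/2^i \rfloor$ be the largest multiple of $2^i$ not exceeding $m$. Then $\pi_s(m) = 0$ (for $m < 2^s$; the endpoint $m = 2^s$ needs a trivial separate remark) and $\pi_0(m) = m$, so we may write the telescoping identity
\[
	a_m = a_{\pi_s(m)} + \sum_{i=0}^{s-1} \big(a_{\pi_i(m)} - a_{\pi_{i+1}(m)}\big).
\]
Now I would apply this to both endpoints of each jump $a_{k_j} - a_{k_{j-1}}$ and use the triangle inequality in $\ell^2$. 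Each difference $a_{\pi_i(k_j)} - a_{\pi_{i+1}(k_j)}$ is of the form $a_{(\ell+1)2^i} - a_{\ell 2^i}$ for a suitable $\ell$ (since consecutive multiples of $2^i$ lying inside a fixed dyadic block of length $2^{i+1}$ differ by one step at scale $i$), so it appears among the terms summed on the right-hand side of \eqref{eq:32}. The crucial combinatorial point is that as $j$ ranges over $1, \ldots, J$, for a fixed scale $i$ the values $\pi_i(k_j)$ are monotone non-decreasing, so each dyadic interval $[\ell 2^i, (\ell+1)2^i]$ is "crossed" by at most one of the jumps $k_{j-1} \to k_j$ in a way that contributes a genuinely new difference $a_{(\ell+1)2^i} - a_{\ell 2^i}$ at that scale; all other contributions telescope or vanish. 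Hence at each scale $i$ the total $\ell^2$-mass coming from scale-$i$ differences, summed over all jumps $j$, is controlled by $\big(\sum_{\ell=0}^{2^{s-i}-1} |a_{(\ell+1)2^i} - a_{\ell 2^i}|^2\big)^{1/2}$, and summing over $i = 0, \ldots, s$ and taking into account the two endpoints of each jump produces the factor $\sqrt{2}$.

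The main obstacle, and the step requiring the most care, is making the combinatorial counting in the previous paragraph fully rigorous: one must check that for each scale $i$, no scale-$i$ difference $a_{(\ell+1)2^i} - a_{\ell 2^i}$ is used more than a bounded number of times across all the jumps, and that the factor coming from the triangle inequality in $\ell^2$ (summing over the $s+1$ scales and over the two endpoints) really collapses to the clean constant $\sqrt{2}$ rather than something growing in $s$. The cleanest way to organize this is to expand $a_{k_j} - a_{k_{j-1}}$ at the smallest scale $i^*$ at which $\pi_{i^*}(k_{j-1}) = \pi_{i^*}(k_j)$, i.e.\ the coarsest common dyadic ancestor, so that the two telescoping chains for $k_{j-1}$ and $k_j$ share a common head and only the tails below scale $i^*$ survive; this is exactly the structure exploited in the standard proof and I would follow \cite{mt3} for the bookkeeping details, since the argument is routine once set up this way.
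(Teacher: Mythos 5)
The paper itself gives no proof of this lemma (it is quoted from \cite[Lemma 1]{mt3}), so I compare your sketch with the standard argument. Your reduction to $r=2$ via $V_r\le V_2$ and the plan to sum scale by scale after Minkowski's inequality in $\ell^2$ are correct and are indeed the skeleton of the known proof. The genuine gap is the combinatorial claim that at a fixed scale $i$ ``each dyadic interval is crossed by at most one of the jumps in a way that contributes a genuinely new difference.'' In your decomposition the scale-$i$ term attached to a point $m$ is $a_{\pi_i(m)}-a_{\pi_{i+1}(m)}$, which equals the difference over the \emph{lower half} of the $2^{i+1}$-block containing $m$ and is the same for every $m$ in the upper half of that block; in particular this dyadic interval need not be contained in the increment $(k_{j-1},k_j]$ (e.g. $a_4-a_3=(a_4-a_0)-(a_2-a_0)-(a_3-a_2)$). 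Consequently a fixed scale-$i$ difference can survive, without cancellation, in two distinct jumps (once when the sequence enters the upper half of the block, once when it leaves), and a jump passing from the upper half of one block directly into the upper half of the next combines two such differences, costing an extra factor $2$ by Cauchy--Schwarz. Concretely, with $a_0=0$, $a_{2^i}=a_{2^{i+1}}=t$, $a_m=0$ for $m\ge 3\cdot 2^i$, and $k_0=2^i$, $k_1=3\cdot 2^i$, $k_2=2^{i+2}$, the scale-$i$ contributions in your scheme sum to $\abs{2t}^2+\abs{t}^2=5\abs{t}^2$, while $2\sum_{\ell}\abs{a_{(\ell+1)2^i}-a_{\ell 2^i}}^2=4\abs{t}^2$; so the per-scale bound with constant $\sqrt2$ fails for this decomposition (the worst case gives constant $2$ per scale), and the lemma as stated, with $\sqrt2$, is not reached. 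Deferring ``the bookkeeping details'' to \cite{mt3} does not close this, since that bookkeeping is exactly the content of the lemma.

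The fix, which is what the cited proof does, is to attach the dyadic intervals to the increments rather than to the endpoints: decompose each interval $(k_{j-1},k_j]$ into maximal dyadic intervals $\big(\ell 2^i,(\ell+1)2^i\big]$ \emph{contained in it}. Then there are at most two such intervals per scale $i$ for each $j$, and, across different $j$, all these intervals are pairwise disjoint, so each dyadic difference occurs at most once in total. Writing $a_{k_j}-a_{k_{j-1}}=\sum_i b_{i,j}$ with $b_{i,j}$ the sum of the at most two scale-$i$ differences, Minkowski's inequality in $\ell^2(j)$ gives $\big(\sum_j\abs{a_{k_j}-a_{k_{j-1}}}^2\big)^{1/2}\le\sum_{i=0}^{s}\big(\sum_j\abs{b_{i,j}}^2\big)^{1/2}$, and $\abs{b_{i,j}}^2\le 2\,(\text{sum of the squares of the at most two differences})$ together with the disjointness yields $\sum_j\abs{b_{i,j}}^2\le 2\sum_{\ell=0}^{2^{s-i}-1}\abs{a_{(\ell+1)2^i}-a_{\ell 2^i}}^2$, which is exactly the constant $\sqrt2$; no separate treatment of the endpoint $2^s$ is needed.
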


\subsection{Pointwise ergodic theorems}
\label{sec:3}
In this section we show how to deduce the pointwise ergodic theorem (Theorem \ref{main_thm:1}) from {\emph{a priori}}
$r$-variational estimates for $\scrM_N^\calP$.
\begin{proposition}
\label{prop:3}
	Let $p \in (1, \infty)$ and $r \in (2, \infty)$. Suppose that there is $C > 0$ such that for all $f \in L^p(X, \mu)$,
	\begin{equation}
		\label{eq:139}
		\big\|V_r\big(\scrM_N^\calP f : N \in \NN\big)\big\|_{\ell^p} \leq C \|f\|_{\ell^p}.
	\end{equation}
	Then there is $C > 0$ such that for all $f \in L^p(X, \mu)$,
	\[
		\big\|\sup_{N \in \NN} \big|\scrA_N f \big|\big\|_{\ell^p} \leq C \|f\|_{\ell^p},
	\]
	and the averages $\big(\scrA_N f(x) : N \in \NN\big)$ converges for $\mu$-almost all $x \in X$.
\end{proposition}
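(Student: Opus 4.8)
The plan is to deduce the maximal inequality and the pointwise convergence from the variational bound \eqref{eq:139} in the standard two-step fashion, transferring everything to the weighted averages $\scrM_N^\calP$ first, since the sums over primes are more regular when weighted by $\prod_j \log p_j$. First I would record the maximal inequality: by \eqref{eq:94}, for any fixed $N_0 \in \NN$ (say $N_0 = 1$),
\[
	\sup_{N \in \NN} \abs{\scrM_N^\calP f} \leq V_r\big(\scrM_N^\calP f : N \in \NN\big) + \abs{\scrM_1^\calP f},
\]
and since $\scrM_1^\calP$ is a bounded averaging operator on $L^p(X,\mu)$, hypothesis \eqref{eq:139} gives $\big\|\sup_N \abs{\scrM_N^\calP f}\big\|_{L^p} \leq C\|f\|_{L^p}$.

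Next I would establish pointwise convergence of $\big(\scrM_N^\calP f(x) : N \in \NN\big)$. The key structural fact is the one quoted in the excerpt: if $V_r(a_N : N \in \NN) < \infty$ for some finite $r$, then the sequence $(a_N)$ converges. Applying \eqref{eq:139} we get $V_r\big(\scrM_N^\calP f : N \in \NN\big) < \infty$ for $\mu$-almost every $x$, hence $\scrM_N^\calP f(x)$ converges $\mu$-a.e.\ to some limit $f^\dagger(x)$; moreover by the maximal inequality and Fatou, $f^\dagger \in L^p(X,\mu)$. (No dense subclass argument is needed here — this is precisely the advantage of the variational approach over the oscillation method.)

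The remaining, and main, step is to pass from the weighted averages $\scrM_N^\calP$ to the unweighted averages $\scrA_N^\calP$. Here I would compare $\scrA_N^\calP f$ with $\scrM_N^\calP f$ using summation by parts in the prime variables together with the prime number theorem in the form $\vartheta_B(N) \sim \pi_B(N) \cdot (\text{appropriate log factor})$, or more precisely by writing $\scrA_N^\calP$ as an average of the operators $\scrM_t^\calP$ (or of closely related partial sums) against a measure in $t$ coming from the weights $\log p_j$, and then controlling the difference by the maximal function of $(\scrM_N^\calP f)$. Concretely, one expects an estimate of the shape
\[
	\sup_{N \in \NN} \abs{\scrA_N^\calP f - \scrM_{N}^\calP f} \lesssim \text{(error terms controlled by } \sup_N \abs{\scrM_N^\calP f}\text{)},
\]
so that the maximal inequality transfers and, since the error tends to $0$ pointwise, so does the convergence: $\scrA_N^\calP f(x) \to f^\dagger(x)$ for $\mu$-a.e.\ $x$, which is the assertion of Theorem~\ref{main_thm:1} with $f^* = f^\dagger$. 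The delicate point I expect to be the main obstacle is making the Abel-summation comparison in the $k''$ prime variables uniform in $N$ and, crucially, uniform in the coefficients of $\calP$ — the multi-dimensional range $B_N$ (rather than a box) forces one to be careful about how the partial sums over $p \in \PP^{k''}$ interact with the constraint $\ind{B_N}(n,p)$, and one must ensure that the PNT error terms, after the rearrangement, are genuinely dominated by the already-controlled maximal function rather than accumulating in $N$. Once that comparison is in place, the rest is bookkeeping.
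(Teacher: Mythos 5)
Your first two steps are fine and coincide with the paper: \eqref{eq:94} together with \eqref{eq:139} gives the maximal inequality for $\scrM_N^\calP$ (modulo the trivial remark that one should anchor at some $N_0$ with $\vartheta_B(N_0)>0$ rather than at $N_0=1$), and a.e.\ finiteness of the $r$-variation gives a.e.\ convergence of $\scrM_N^\calP f$ with no dense class needed. The gap is in the only substantive step, the comparison of $\scrA_N^\calP$ with $\scrM_N^\calP$, which you leave as an ``expected estimate''. Two problems. First, the shape you display, $\sup_N\abs{\scrA_N^\calP f-\scrM_N^\calP f}\lesssim$ (terms controlled by $\sup_N\abs{\scrM_N^\calP f}$), cannot by itself transfer convergence: without a factor tending to $0$ it only yields a maximal bound. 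Second, the refined version you would actually need, $\abs{\scrA_N^\calP f-\scrM_N^\calP f}\le \epsilon_N\,\sup_t\abs{\scrM_t^\calP f}$ pointwise with $\epsilon_N\to0$, is not available for general $f\in L^p$ in this multi-parameter setting: after Abel summation in a prime variable the partial sums carry the constraint $\ind{B_N}(n,p)\ind{\{p_s\le m\}}$ and are no longer members of the family $(\scrM_t^\calP)$ at any scale $t$, and the gain of $(\log N)^{-1}$ per prime variable comes from prime counting (smallness of the contribution of small $m$), which is a statement about kernel mass, i.e.\ an operator-norm statement, not a pointwise one. (In the case $k=k''=1$ the partial sums happen to equal $\vartheta(t)\scrM_t$, which is why a pointwise identity works there; this does not persist for $k''\ge1$ with a general convex $B_N$.)

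The paper resolves exactly this point by a different device: iterated partial summation in the prime variables plus the prime number theorem give the operator-norm estimate $\|\scrM_N^\calP f-\scrA_N^\calP f\|_{L^p}\lesssim(\log N)^{-1}\|f\|_{L^p}$ for every $p\in[1,\infty]$, see \eqref{eq:134}--\eqref{eq:138}. The maximal inequality for $\scrA_N^\calP$ then follows by comparing along the lacunary sequence $N=2^n$, where the errors of size $n^{-1}$ are $\ell^p$-summable, and dominating the full maximal function by the lacunary one (positivity of the averages and $\pi_B(2N)\simeq\pi_B(N)$). The a.e.\ convergence is obtained by first using the maximal inequality to reduce, by density, to bounded $f$, for which the $p=\infty$ case of \eqref{eq:138} gives genuine pointwise decay $\abs{\scrM_N^\calP f(x)-\scrA_N^\calP f(x)}\lesssim(\log N)^{-1}\|f\|_{L^\infty}$. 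These two devices --- the lacunary subsequence and the reduction to bounded functions --- are what your sketch is missing, and they are precisely what makes the ``bookkeeping'' go through.
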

\begin{proof}
	Let us fix $N \in \NN$. For each $m \in \{1, \ldots, N\}$ and $s \in \{1, \ldots, k''\}$, we set
	\[
		S^{(s)}_{N, m} f(x) = 
		\sum_{n \in \NN^{k'}} 
		\sum_{\atop{p \in \PP^{k''}}{p_s \leq m}}
		f\Big(T_1^{\calP_1(n, p)} \cdots T_{d_0}^{\calP_{d_0}(n, p)} x \Big)
		\ind{B_N}(n, p)
		\bigg(\prod_{j = s+1}^{k''} \log p_j\bigg),
	\]
	and $S^{(0)}_{N, N} f = \vartheta_B(N) \scrM_N^\calP f$. For $0 \leq s < k''$, by the partial summation we obtain
	\begin{align*}
		S^{(s)}_{N, N} f& = 
		\sum_{m = 2}^{N}
		\left(S^{(s+1)}_{N, m}f - S^{(s+1)}_{N, m-1}f \right) \log m \\
		&=
		(\log N) S^{(s+1)}_{N, N}f
		+
		\sum_{m = 2}^{N-1}
		\big(\log m - \log (m+1)\big) S^{(s+1)}_{N, m} f.
	\end{align*}
	Hence,
	\begin{align}
		\nonumber
		\big\|
		S^{(s)}_{N, N} f - (\log N) S^{(s+1)}_{N, N} f
		\big\|_{L^p}
		&\leq
		\sum_{m = 2}^{N-1}
		\big\|S^{(s+1)}_{N, m} f \big\|_{L^p} m^{-1} \\
		\nonumber
		&\lesssim
		N^{k-1} (\log N)^{-s}
		\sum_{m = 2}^{N-1}
		\|f\|_{L^p} (\log m)^{-1}\\
		\label{eq:134}
		&\lesssim
		N^k (\log N)^{-s-1} \|f\|_{L^p},
	\end{align}
	where we have used the trivial estimate
	\[
		\big\|S^{(s+1)}_{N, m} f \big\|_{L^p} \lesssim N^{k-1} (\log N)^{-s} m (\log m)^{-1} \|f\|_{L^p},
	\]
	which is a consequence of \eqref{eq:151} and the prime number theorem. Observe that
	\[
		S^{(k'')}_{N, N} f= \pi_B(N) \scrA_N^\calP f,
	\]
	thus by repeated application of \eqref{eq:134}, we arrive at the conclusion that 
	\begin{equation}
		\label{eq:143}
		\Big\|
		\vartheta_B(N) \scrM_N^\calP f
		-
		(\log N)^{k''}
		\pi_B(N) \scrA_N^\calP f
		\Big\|_{L^p}
		\lesssim
		\vartheta_B(N) (\log N)^{-1} \|f\|_{L^p},
	\end{equation}
	because the prime number theorem implies that $\vartheta_B(N) \simeq N^k$. In particular,
	by taking $f = \ind{X}$ and $p = \infty$ in \eqref{eq:143} we get
	\[
		\pi_B(N) = \vartheta_B(N) (\log N)^{-k''} \Big(1 + \calO\big((\log N)^{-1}\big)\Big).
	\]
	Hence, for any $p \in [1, \infty]$ and $f \in L^p(X, \mu)$, 
	\begin{equation}
		\label{eq:138}
		\big\|\scrM_N^\calP f - \scrA_N^\calP f \big\|_{L^p} \lesssim (\log N)^{-1} \|f\|_{L^p}.
	\end{equation}
	Next, if $p > 1$ then we can write
	\begin{align*}
		\big\|
		\sup_{n \in \NN} \big|\scrA_{2^n}^\calP f \big|
		\big\|_{L^p}
		&\leq
		\big\|
		\sup_{n \in \NN} \big|\scrM_{2^n}^\calP f \big|
		\big\|_{L^p}
		+
		\big\|
		\sup_{n \in \NN} \big|\scrM_{2^n}^\calP f - \scrA_{2^n}^\calP f \big|
		\big\|_{L^p} \\
		&\lesssim
		\big\|
        \sup_{n \in \NN} \big|\scrM_{2^n}^\calP f \big|
        \big\|_{L^p}
        +
		\Big(
		\sum_{n \in \NN}
		n^{-p}
		\Big)^{1/p}
		\|f\|_{L^p}.
	\end{align*}
	In view of \eqref{eq:94}, {\emph{a priori}} estimate \eqref{eq:139} entails that
	\[
		\big\|
        \sup_{N \in \NN} \big|\scrA_N^\calP f \big|
        \big\|_{L^p}
		\lesssim
		\|f\|_{L^p}.
	\]
	Hence, while proving $\mu$-almost everywhere convergence of the averages $\big(\scrA_N f : N \in \NN\big)$
	for $f \in L^p(X, \mu)$, we may assume that the function $f$ is bounded. By \eqref{eq:138}, for $p = \infty$,
	we can write
	\[
		\big|
		\scrM_N^\calP f(x) - \scrA_N^\calP f(x) 
		\big|
		\leq
		\big\|
        \scrM_N^\calP f - \scrA_N^\calP f
        \big\|_{L^\infty}
		\lesssim
		(\log N)^{-1}
		\|f\|_{L^\infty}.
	\]
	Therefore, the convergence of $\big(\scrM_N^\calP f(x) : N \in \NN\big)$ implies the convergence of
	$\big(\scrA_N^\calP f(x) : N \in \NN\big)$ to the same limit.
\end{proof}
Thanks to the Calder\'on's transference principle we can restrict attention to the model dynamical system, that is,
$\ZZ^{d_0}$ with the counting measure and the shift operator. Hence, it suffices to study the operators
\eqref{eq:131} and \eqref{eq:132} on $\ell^p\big(\ZZ^{d_0}\big)$.

\subsection{Lifting lemma}
\label{sec:1}
For the polynomial mapping $\calP = \big(\calP_1, \ldots, \calP_{d_0}\big)$, let us define
\[
	\deg \calP = \max\big\{\deg \calP_j : 1 \leq j \leq d_0\big\}.
\]
It is convenient to work with the set
\[
	\Gamma = \big\{\gamma \in \ZZ^k \setminus \{0\} : 0 \leq \gamma_j \leq \deg \calP, \text{ for each } 
	j = 1, \ldots, k\big\}
\]
equipped with the lexicographic order. Then each $\calP_j$ can be expressed as
\[
	\calP_j(x) = \sum_{\gamma \in \Gamma} c_{j, \gamma} x^\gamma,
\]
for some $c_{j, \gamma} \in \ZZ$. The cardinality of the set $\Gamma$ is denoted by $d$. We identify $\RR^d$ with
$\RR^\Gamma$. Let $A$ be a diagonal $d \times d$ matrix such that for all $\gamma \in \Gamma$ and $v \in \RR^d$,
\begin{equation}
	\label{eq:21}
	(A v)_\gamma = \abs{\gamma} v_\gamma.
\end{equation}
For $t > 0$, we set
\[
	t^A v = \big(t^{\abs{\gamma}} v_\gamma : \gamma \in \Gamma\big).
\]
Finally, we introduce the \emph{canonical} polynomial mapping,
\[
	\calQ = (\calQ_\gamma : \gamma \in \Gamma): \RR^k \rightarrow \RR^d,
\]
by setting $\calQ_\gamma(x) = x^\gamma$. Now, if we define $L: \RR^d \rightarrow \RR^{d_0}$ to be the linear 
transformation such that for $v \in \RR^d$,
\[
	(L v)_j = \sum_{\gamma \in \Gamma} c_{j, \gamma} v_\gamma,
\]
then $L \calQ = \calP$. The following lemma allows us to reduce the problems to studying the canonical polynomial
mappings. 
\begin{lemma}{\cite[Lemma 2.1]{mst1}}
	Let $R^\calP_N$ be any of the operators $M_N^\calP$ or $H_N^\calP$. Suppose that for some $p \in (1, \infty)$
	and $r \in (2, \infty)$,
	\[
		\big\|V_r\big(R_N^\calQ f : N \in \NN\big)\big\|_{\ell^p(\ZZ^d)} 
		\leq C_{p, r} \|f\|_{\ell^p(\ZZ^d)},
	\]
	then 
	\[
		\big\|V_r\big(R_N^\calP f : N \in \NN\big)\big\|_{\ell^p(\ZZ^{d_0})} 
		\leq C_{p, r} \|f\|_{\ell^p(\ZZ^{d_0})}.
	\]
\end{lemma}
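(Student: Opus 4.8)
The plan is to reduce the variational estimate for the operators $R_N^\calP$ on $\ell^p(\ZZ^{d_0})$ to the corresponding estimate for the canonical operators $R_N^\calQ$ on $\ell^p(\ZZ^d)$, by exhibiting the first as a restriction of the second to a suitable affine sublattice. Recall $L\calQ = \calP$, where $L : \RR^d \to \RR^{d_0}$ is the linear map with integer entries $c_{j,\gamma}$. The key observation is that for $f : \ZZ^{d_0} \to \CC$ we have, writing $g = f \circ L$ where $L$ is viewed as a homomorphism $\ZZ^d \to \ZZ^{d_0}$,
\[
	R_N^\calP f\big(L x\big) = R_N^\calQ g(x)
\]
for every $x \in \ZZ^d$, since $f(Lx - \calP(n,p)) = f\big(L(x - \calQ(n,p))\big) = g(x - \calQ(n,p))$ and the kernel/weight factors multiplying $f$ and $g$ are identical (they depend only on $(n,p)$, not on the dynamics). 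Consequently, for every $x$,
\[
	V_r\big(R_N^\calP f : N \in \NN\big)(Lx) = V_r\big(R_N^\calQ g : N \in \NN\big)(x),
\]
because the variational seminorm is computed pointwise in the space variable.

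The second step is the Fubini-type counting argument. The image $L(\ZZ^d)$ is a subgroup of $\ZZ^{d_0}$, and $\ZZ^d$ is partitioned into cosets of $\ker L$; but it is cleaner to avoid injectivity issues by a standard trick. For a parameter $m \in \NN$ consider $L_m : \ZZ^d \to \ZZ^{d_0}$ defined by $L_m x = Lx$ but precomposed with nothing — instead one appeals to the observation in \cite[Lemma 2.1]{mst1} that it suffices to test against finitely supported $f$ and then to sum $\sum_{x \in \ZZ^d}$ of $|V_r(R_N^\calQ g : N)(x)|^p$ over a fundamental domain argument. Concretely: pick a large box $Q \subseteq \ZZ^{d_0}$; then
\[
	\sum_{y \in \ZZ^{d_0}} \big|V_r\big(R_N^\calP f : N\big)(y)\big|^p
	\cdot \#\{x \in \ZZ^d : Lx = y\}
	= \sum_{x \in \ZZ^d} \big|V_r\big(R_N^\calQ g : N\big)(x)\big|^p,
\]
but the left side is infinite if $\ker L \ne \{0\}$. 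The remedy is to intersect with a growing family: let $\Lambda_m = L(\ZZ^d) \cap (-m,m]^{d_0}$-type truncations, or better, note that one may assume $L$ is injective by first doing a change of variables on $\ZZ^d$ (the matrix $A$ and the structure of $\Gamma$ guarantee $\calQ$ generates enough directions), or simply invoke that for the canonical mapping the estimate already depends only on $d$, $k$, $\deg\calP$ and not on $L$, so applying it to $g$ supported in a large cube and letting the cube exhaust $\ZZ^d$ recovers the bound for $f$ via the coset decomposition, since $\|g\|_{\ell^p(\ZZ^d)}^p = \sum_{z \in \ker L} \|f\|^p$-type identities localize correctly on each coset.

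The main obstacle is the bookkeeping when $L$ is not injective: one must be careful that the pullback $g = f\circ L$ need not be $\ell^p$-summable on all of $\ZZ^d$. The honest fix, which is the one used in \cite{mst1}, is to work on a single coset at a time: fix $z_0 \in \ZZ^d$ with $L z_0 = y_0$ ranging over a transversal of $L(\ZZ^d)$ in $\ZZ^{d_0}$, restrict to the sublattice $z_0 + (\text{complement of }\ker L)$, on which $L$ is a bijection onto $L(\ZZ^d)$, and apply the canonical estimate there. Summing the resulting inequalities over the (finite, if $f$ is finitely supported; or controlled by dominated convergence in general) family of cosets, and using that $L(\ZZ^d)$ has finite index or at worst that the transference is coset-wise isometric up to the Jacobian factor $\det$ of the injective part of $L$, yields
\[
	\big\|V_r\big(R_N^\calP f : N \in \NN\big)\big\|_{\ell^p(\ZZ^{d_0})}
	\leq C_{p,r} \|f\|_{\ell^p(\ZZ^{d_0})}
\]
with the same constant $C_{p,r}$, since the constant for the canonical operator is by hypothesis independent of the embedding data. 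This completes the reduction.
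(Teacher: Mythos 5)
Your starting point is the same as in the cited source: the intertwining identity $R_N^{\calP}f(y+Lx)=R_N^{\calQ}\big(f(y+L\,\cdot\,)\big)(x)$, hence the pointwise identity for the variational seminorms, is correct and is indeed the heart of the lifting. But the second half of your argument --- passing from this identity to the $\ell^p$ inequality --- has a genuine gap, and the patches you suggest do not work. First, you cannot ``assume $L$ is injective by a change of variables'': typically $d>d_0$ and $\ker L$ is large. Second, $L(\ZZ^d)$ need not have finite index in $\ZZ^{d_0}$ (e.g.\ $\calP(n)=(n,2n)$), so no ``coset-wise isometric'' accounting is available. Third, and most importantly, your proposal to work ``on a single coset at a time'', restricting to a complement of $\ker L$ inside $\ZZ^d$, is incompatible with the hypothesis: the canonical estimate is an estimate for $R_N^{\calQ}$ acting on all of $\ell^p(\ZZ^d)$, and $R_N^{\calQ}g(x)$ at a point $x$ of such a sublattice involves values $g(x-\calQ(n,p))$ at points that leave the sublattice, so there is no ``restricted'' operator to which the hypothesis applies. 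The underlying obstruction you correctly sense --- that $g=f\circ L$ is constant along $\ker L$ and hence not in $\ell^p(\ZZ^d)$ --- is not resolved by any of these devices.

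The missing idea is the standard truncation-and-averaging transference. By monotone convergence it suffices to bound $V_r(R_N^{\calP}f:N\le N_0)$ uniformly in $N_0$. Since every shift satisfies $\abs{\calQ_\gamma(n,p)}\le N_0^{\abs{\gamma}}$ for $(n,p)\in B_{N_0}$, for $x$ in the cube $Q_\Lambda=\{\norm{x}_\infty\le\Lambda\}$ one may replace $f(y+L\,\cdot\,)$ by its truncation to $Q_{\Lambda+N_0^{D}}$ (with $D=\deg\calP$) without changing $R_N^{\calQ}$ at $x$ for any $N\le N_0$; this truncated function is finitely supported, so the canonical estimate applies. Summing the resulting inequality over $x\in Q_\Lambda$ and over \emph{all} translations $y\in\ZZ^{d_0}$ (which is what handles both the multiplicity coming from $\ker L$ and the fact that $L(\ZZ^d)$ may be a small subgroup), one gets
\[
\abs{Q_\Lambda}\,\big\|V_r\big(R_N^{\calP}f:N\le N_0\big)\big\|_{\ell^p(\ZZ^{d_0})}^p
\le C_{p,r}^p\,\abs{Q_{\Lambda+N_0^{D}}}\,\|f\|_{\ell^p(\ZZ^{d_0})}^p,
\]
and letting $\Lambda\to\infty$ the volume ratio tends to $1$, after which $N_0\to\infty$ finishes the proof with the same constant. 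Your write-up gestures at ``a large cube'' and ``letting the cube exhaust $\ZZ^d$'' but never sets up this counting, which is precisely the step where the constant is preserved and where the non-injectivity of $L$ is rendered harmless; as written, the argument does not close.
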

In the rest of the article by $M_N$ and $H_N$ we denote the averaging and the truncated discrete singular operator for the
canonical polynomial mapping $\calQ$, that is $M_N = M_N^\calQ$ and $H_N = H_N^\calQ$.

\subsection{Ionescu--Wainger type multipliers}
\label{sec:2}
Let $\calF$ denote the Fourier transform on $\RR^d$, that is for any $f \in L^1\big(\RR^d\big)$,
\[
	\calF f(\xi) = \int_{\RR^d} f(x) e^{2\pi i \sprod{\xi}{x}} {\: \rm d}x.
\]
If $f \in \ell^1\big(\ZZ^d\big)$, then we set
\[
	\hat{f}(\xi) = \sum_{x \in \ZZ^d} f(x) e^{2 \pi i \sprod{\xi}{x}}.
\]
To simplify the notation, by $\calF^{-1}$ we denote the inverse Fourier transform on $\RR^d$ as well as the inverse
Fourier transform on the $d$-dimensional torus identified with $(0, 1]^d$. We also fix $\eta : \RR^d \rightarrow \RR$, 
a smooth function such that $0 \leq \eta \leq 1$, and
\[
	\eta(x) = 
	\begin{cases}
		1 & \text{if } \norm{x}_\infty \leq \tfrac{1}{32 d}, \\
		0 & \text{if } \norm{x}_\infty \geq \tfrac{1}{16 d}. 
	\end{cases}
\]
We additionally assume that $\eta$ is a convolution of two non-negative smooth functions with
supports contained inside $\left[-\tfrac{1}{8d}, \tfrac{1}{8d}\right]^d$.

Next, let us recall necessary notation to define auxiliary multipliers of Ionescu--Wainger type. For details we refer to
\cite{mst1}. The following construction depends on a parameter $\beta \in \NN$.

For $n \in \NN$, we set $n_0 = \lfloor n^{1/20}\rfloor$ and $Q_0 = (n_0!)^D$ where
$D = 20 \beta + 1$. We define
\[
	\Pi = \bigcup_{k = 1}^D \Pi_k,
\]
wherein for $k \in \{1, \ldots, D\}$ we have set
\[
	\Pi_k = \Big\{p_1^{\gamma_1} \cdots p_k^{\gamma_k} : \gamma_j \in \NN_D \text{ and } 
	p_j \in \PP \cap \big(n_0, n^\beta\big]
	\text{ are distint for all } 1 \leq j \leq k \Big\}.
\]
Let
\[
	P_n = \big\{Q \cdot w : Q \mid Q_0 \text{ and } w \in \Pi \cup \{1\} \big\}.
\]
Notice that $\NN_{n^\beta} \subseteq P_n \subseteq \NN_{e^{n^{1/10}}}$. For $q \in \NN$, let us define
\[
	A_q = \big\{a \in \NN_q : (a, q) = 1 \big\},
\]
and
\[
	\bfA_q = \Big\{a \in \NN_q^k : \gcd\big(q, a_1, \ldots, a_k\big) = 1\Big\}.
\]
Lastly, we set
\begin{equation}
	\label{eq:91}
	\scrU_n^\beta = \Big\{a/q : a \in \bfA_q \text{ and } q \in P_n \Big\}.
\end{equation}
Given $(\Theta_j : j \in \ZZ)$ a sequence of multipliers on $\RR^d$ such that for each $r \in (1, \infty)$ there is 
$A_r > 0$ such that for all $f \in L^2\big(\RR^d\big) \cap L^r\big(\RR^d\big)$,
\[
	\Big\|
	\Big(
	\sum_{j \in \ZZ} \big|\calF^{-1}\big(\Theta_j \: \calF f \big)\big|^2\Big)^{1/2} \Big\|_{L^r} 
	\leq A_r 
	\|f\|_{L^r},
\]
its discrete counterpart is given by the formula
\[
	\Theta_j^{\beta}(\xi) = \sum_{a/q \in \mathscr{U}_n^\beta} 
	\eta\big(\mathcal{E}_n^{-1} (\xi - a/q)\big) \Theta_j(\xi - a/q),
\]
where $\mathcal{E}_n$ being a diagonal $d \times d$ matrix with positive entries
$(\epsilon_{n, \gamma} : \gamma \in \Gamma)$ such that $\epsilon_{n, \gamma} \leq \exp\big(-n^{1/5}\big)$. Then by 
\cite[Theorem 2.1]{msz-k}, for each $p \in (1, \infty)$ and any finitely supported function $f: \ZZ^d \rightarrow \CC$,
\begin{equation}
	\label{eq:10}
	\Big\| 
	\Big(\sum_{j \in \ZZ} \big| \calF^{-1}\big(\Theta_j^{\beta} \hat{f}\big) \big|^2\Big)^{1/2} 
	\Big\|_{\ell^p} 
	\lesssim_{\beta, p} 
	\log (n+1) A_{2r}
	\|f \|_{\ell^p},
\end{equation}
where $r = \max\big\{\lceil p/2 \rceil, \lceil p'/2 \rceil \big\}$. The scalar-valued version of \eqref{eq:10}
was proved in \cite{iw}, see also \cite{mst1}. The vector-valued extension was recently observed in \cite{msz-k}.
Essentially its proof follows the same line as scalar-valued except that in place of Marcinkiewicz--Zygmund inequality
one uses Kahane's vector-valued extension of Khinchine's inequality, see \cite[Theorem 2.1]{msz-k} for details.

\section{Trigonometric sums}
\label{sec:4}
\subsection{Weyl--Vinogradov sum}
We say that a subset of integers $\calA$ is \emph{polynomially regular}, if for all $\alpha, \alpha_1 > 0$, there are
$\beta_0 > 0$ and a constant $C > 0$ so that for any integer
$1 \leq Q \leq (\log N)^{\alpha_1}$, $\beta > \beta_0$ and any polynomial $P$ of a form
\[
	P(x) = \frac{a}{q} x^d + \ldots + \xi_1 x,
\]
for some coprime integers $a$ and $q$, such that $1 \leq a \leq q$, and
\[
	(\log N)^\beta \leq q \leq N^d (\log N)^{-\beta},
\]
we have
\begin{equation}
	\label{eq:8}
	\bigg|
	\sum_{\atop{n \in \calA}{n \equiv r \bmod Q}}
	 e^{2\pi i P(n)} \ind{[-N, N]}(n) 
	\bigg| \leq C Q^{-1} N (\log N)^{-\alpha},
\end{equation}
for all $r \in\{1, \ldots, Q\}$ and $N \in \NN$. 

Let us check that $\ZZ$ is polynomially regular. We write
\begin{equation}
	\label{eq:15}
	\sum_{\atop{n \in \ZZ}{n \equiv r \bmod Q}} 
	e^{2\pi i P(n)} \ind{[1, N]}(n)
	= 
	\sum_{m = 1}^{\lfloor N/Q \rfloor} e^{2\pi i \tilde{P}(m)} + \calO(Q),
\end{equation}
where
\[
	\tilde{P}(m) = P(Q m + r) = \frac{a}{q} Q^d m^d + \text{lower powers of m}.
\]
Set $M = \lfloor N/Q \rfloor$ and $a'/q' = Q^d a/q$ with $(a', q') = 1$. Then
\[
	(\log M)^{\beta-d \alpha_1} \leq q Q^{-d} \leq q' \leq q \leq M^d (\log M)^{-\beta+d \alpha_1},
\]
and hence, by Weyl estimates with logarithmic loss (see e.g. \cite[Remark after Theorem 1.5]{w0}),
\begin{align*}
	\Big|\sum_{m = 1}^M e^{2\pi i \tilde{P}(m)} \Big| 
	&\leq C M (\log M)
	\bigg(\frac{1}{q'} + \frac{1}{M} + \frac{q'}{M^d}\bigg)^{\frac{1}{2 d^2 - 2d +1}}.
\end{align*}
Therefore, for $\beta > \beta_0 = (1+\alpha)(2 d^2 - 2d +1) +d \alpha_1$, by \eqref{eq:15}, we conclude that
\[
	\Big|\sum_{\atop{n \in \ZZ}{n \equiv r \bmod Q}}
    e^{2\pi i P(n)} \ind{[1, N]}(n)
	\Big|
	\leq
	C Q^{-1} N (\log N)^{-\alpha},
\]
proving the claim. Another example of polynomially regular sets is the set of prime numbers. This is a consequence of 
\cite[Theorem 10]{hua}.

Our aim is to understand exponential sums over Cartesian products of polynomially regular sets. Let us fix
a function $\phi: \RR^k \rightarrow \CC$ satisfying
\begin{equation}
	\label{eq:7}
	\abs{\phi(x)} \leq C, \qquad \abs{\nabla \phi(x)} \leq (1+\norm{x})^{-1}.
\end{equation}
The main result of this section is the following theorem.
\begin{theorem}
	\label{thm:1}
	Let $\calA_1, \ldots \calA_k$ be polynomially regular subsets of $\ZZ$. For all $\alpha > 0$ there are
	$\beta_0 > 0$ and a constant $C > 0$ so that for all $\beta > \beta_0$ and any polynomial $P$ of a form
	\[
		P(x) = \sum_{0 < \abs{\gamma} \leq d} \xi_\gamma x^\gamma,
	\]
	wherein for some $0 < \abs{\gamma_0} \leq d$, 
	\[
		\bigg|\xi_{\gamma_0} - \frac{a}{q}\bigg| \leq \frac{1}{q^2},
	\]
	for some coprime integers $a$ and $q$ such that $1 \leq a \leq q$, and
    \[
        (\log N)^\beta \leq q \leq N^{\abs{\gamma_0}} (\log N)^{-\beta},
    \]
	we have
	\[
		\sup_{\atop{\Omega \subseteq [-N, N]^k}{\Omega \text{ convex}}}
		\left|\sum_{n \in \calA_1 \times \ldots \times \calA_k} e^{2\pi i P(n)} \ind{\Omega}(n) \phi(n)\right|
		\leq
		C N^k (\log N)^{-\alpha}.
	\]
	The constant $C$ depends on $\alpha$, $d$ and a constant in \eqref{eq:7}.
\end{theorem}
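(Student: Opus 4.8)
The plan is to reduce the multi-dimensional exponential sum over a product of polynomially regular sets to a family of one-dimensional sums, by freezing all but one coordinate, and then to distinguish which variable that should be according to which monomial $x^{\gamma}$ carries the well-approximated coefficient $\xi_{\gamma_0}$. More precisely, write $\gamma_0 = (\gamma_{0,1},\dots,\gamma_{0,k})$ and pick an index $i$ with $\gamma_{0,i} \geq 1$; the polynomial $P(x)$, viewed as a polynomial in $x_i$ alone with the other coordinates $x_j$ ($j \neq i$) fixed, has leading-in-$x_i$ behavior governed by a coefficient that is itself a polynomial in the frozen variables and includes the term $\xi_{\gamma_0}\prod_{j\neq i} x_j^{\gamma_{0,j}}$. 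The difficulty is that after freezing the other variables the coefficient of $x_i^{m}$ (for the relevant $m\le d$) need not have a good rational approximation with denominator in the required window $(\log N)^\beta \le q \le N^{m}(\log N)^{-\beta}$: it could be very close to a rational with small denominator, or the frozen product $\prod_{j \neq i} x_j^{\gamma_{0,j}}$ could be small. This is the main obstacle, and it is handled by a pigeonhole/counting argument: one shows that the set of frozen tuples $(x_j)_{j\neq i}$ for which the one-dimensional inner sum fails to have the Weyl cancellation is small, of size $o(N^{k-1})$ up to logarithmic factors, so its contribution is acceptable, while on the complement each inner sum is $\le C N (\log N)^{-\alpha'}$ by the definition of polynomial regularity of $\calA_i$ (after absorbing the smooth weight $\phi$ and the convex cutoff by summation by parts).

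The detailed steps I would carry out are as follows. First, fix $\alpha$ and choose the parameters: set $\alpha' = \alpha + k$ (say) and let $\beta_0$ be large enough that it exceeds the $\beta_0$ coming from polynomial regularity of each $\calA_1,\dots,\calA_k$ applied with exponent $\alpha'$, plus some slack to be determined. Second, handle the smooth weight $\phi$ and the convex indicator $\ind{\Omega}$: since $\Omega$ is convex, for fixed values of the coordinates $x_j$ ($j\neq i$) the slice $\{x_i : x \in \Omega\}$ is an interval, so $\ind{\Omega}(x)\phi(x)$ as a function of $x_i$ is of bounded variation in $x_i$ with total variation $\lesssim \log N$ uniformly (using \eqref{eq:7}), hence by summation by parts the inner sum over $x_i \in \calA_i$ is controlled by a supremum over subintervals $[-N,N]$ of the unweighted character sum, at the cost of a factor $\log N$. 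Third, the key dichotomy: write the coefficient of $x_i^{\abs{\gamma_0}}$ in $P$ as a function $c(x')$ of the frozen tuple $x' = (x_j)_{j\neq i}$; it has the shape $c(x') = \xi_{\gamma_0} \prod_{j\neq i} x_j^{\gamma_{0,j}} + (\text{other terms})$. For each $x'$ one applies Dirichlet's principle to approximate $c(x')$ by a rational $a'/q'$ with $q' \le N^{\abs{\gamma_0}}(\log N)^{-\beta}$; call $x'$ "bad" if $q' < (\log N)^{\beta}$. Fourth, bound the number of bad $x'$: badness forces $c(x')$ to be within $N^{-\abs{\gamma_0}}(\log N)^{\beta}$ of some rational with denominator $< (\log N)^{\beta}$, i.e. $c(x')$ lies in a union of $\lesssim (\log N)^{2\beta}$ intervals each of length $\lesssim N^{-\abs{\gamma_0}}(\log N)^{\beta}$; since $c$ is a polynomial in $x'$ of bounded degree whose top coefficient involves $\xi_{\gamma_0}$, and since $\xi_{\gamma_0}$ is itself badly approximable only up to denominator $q \le N^{\abs{\gamma_0}}(\log N)^{-\beta}$, a Vinogradov-type counting (or a direct argument using that a polynomial of bounded degree takes any value $O(1)$ times and that the "bad" set for the linear-in-one-variable slices is sparse) shows the number of bad $x'$ is $\lesssim N^{k-1}(\log N)^{-\alpha - 1}$, say, once $\beta$ is large. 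Fifth, assemble: on bad $x'$, estimate the inner sum trivially by $\lesssim N$, contributing $\lesssim N^k (\log N)^{-\alpha-1}$; on good $x'$, apply polynomial regularity of $\calA_i$ (with $Q=1$, the relevant $r$, and exponent $\alpha'$) to get each inner sum $\lesssim N(\log N)^{-\alpha'}$, contributing $\lesssim N^{k}(\log N)^{-\alpha'} \cdot \log N = N^k(\log N)^{-\alpha-k+1} \le N^k(\log N)^{-\alpha}$. Summing the two contributions gives the claimed bound $C N^k(\log N)^{-\alpha}$.

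A few technical points deserve care. The induction on $k$ (or the direct slicing) must be uniform over the choice of convex $\Omega$; this is fine because after summation by parts in the $x_i$ variable the remaining estimate is over a genuine box $[-N,N]^{k-1}$ in $x'$ with a sign-varying but bounded weight, and we only use the supremum bound from polynomial regularity, which is already stated with a supremum over intervals implicitly (or can be upgraded to one by a further dyadic decomposition). One also needs $\calA_i$ to be polynomially regular in the sense given with the arithmetic progression condition $n \equiv r \bmod Q$ — here $Q = 1$ suffices, so only the plain version is used. Finally, the hypothesis that $\xi_{\gamma_0}$ has a rational approximation $|\xi_{\gamma_0} - a/q| \le q^{-2}$ with $q$ in the window is exactly what is needed to run the counting of bad $x'$: it guarantees that the leading coefficient of the $x_i$-polynomial, after dividing out the frozen monomial, cannot collapse to a small denominator for too many $x'$. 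The main obstacle, to reiterate, is precisely this counting of the exceptional frozen tuples; everything else is summation by parts and bookkeeping of logarithmic factors.
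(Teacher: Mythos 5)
There is a genuine gap at the heart of your plan, in two related places. First, on the ``good'' slices you want to invoke the polynomial regularity of $\calA_i$ for the frozen one--variable polynomial $P(\cdot\,;x')$, but the definition \eqref{eq:8} only gives cancellation when the \emph{leading} coefficient of the one--dimensional polynomial is a rational with denominator in the window. After freezing $x'$, the well--approximated quantity is the coefficient of $x_i^{\gamma_{0,i}}$ (note also your conflation of $\gamma_{0,i}$ with $\abs{\gamma_0}$, which changes the relevant Dirichlet window), and in general $\gamma_{0,i}$ is not the top $x_i$-degree: already for $P(x_1,x_2)=\xi_{(1,1)}x_1x_2+\xi_{(2,0)}x_1^2$ the frozen leading coefficient is the arbitrary number $\xi_{(2,0)}$, about which nothing is known, so regularity of $\calA_1$ gives nothing on any slice. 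Removing the higher $x_i$-terms requires rationally approximating those coefficients, passing to arithmetic progressions modulo their common denominator $Q$ (of size a power of $\log N$), and a partial summation — exactly the mechanism of Steps 1 and 3 of the paper — so your assertion that ``$Q=1$ suffices, so only the plain version is used'' is not tenable; the progression version of \eqref{eq:8} with $Q\leq(\log N)^{\alpha_1}$ is essential.

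Second, and more seriously, the counting of bad frozen tuples is asserted rather than proved, and the tools you name do not deliver it. Badness is a condition of the form $\lVert q''c(x')\rVert\lesssim N^{-\gamma_{0,i}}(\log N)^{\beta'}$ for some $q''<(\log N)^{\beta'}$, where $c(x')=\sum_{\gamma:\gamma_i=\gamma_{0,i}}\xi_\gamma (x')^{\gamma'}$ is a genuine multivariate polynomial whose \emph{other} coefficients are arbitrary; ``a polynomial takes each value $O(1)$ times'' is about exact values and says nothing about such sublevel sets, which can be large unless one proves equidistribution of $c(x')$ modulo $1$ at scale $N^{-\gamma_{0,i}}$. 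That equidistribution is itself a $(k-1)$-dimensional Weyl--Vinogradov statement, and it does not follow from the hypothesis on $\xi_{\gamma_0}$: the coefficient of $(x')^{\gamma_0'}$ in $q''c(x')$ has denominator comparable to $q$, which may be as large as $N^{\abs{\gamma_0}}(\log N)^{-\beta}\gg N^{\abs{\gamma_0'}}$, i.e.\ outside the admissible window for the frozen variables, so the needed cancellation cannot be decoupled into ``distribution in $x'$'' plus ``oscillation in $x_i$''. In that regime your dichotomy collapses and the bad-set bound is essentially equivalent to the theorem you are trying to prove. The paper avoids this entirely: for mixed $\gamma_0$ it applies Cauchy--Schwarz twice (Weyl differencing in the distinguished variable), which removes the sets $\calA_j$ altogether and reduces to the lattice estimate of \cite[Theorem 3.1]{mst1}, where $\xi_{\gamma_0}$ still sits in front of $x^{\gamma_0}$ with the same total-degree window; only for pure powers $\gamma_0=(0,\dots,\ell,\dots,0)$ does it slice, and then via Dirichlet on the higher coefficients, progressions mod $Q$, and partial summation down to the genuinely one-dimensional case. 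Unless you can supply a proof of the multivariate bad-set count (uniform in the unknown companion coefficients and valid up to $q\leq N^{\abs{\gamma_0}}(\log N)^{-\beta}$), the proposal does not close.
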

\begin{proof}
	Let us first assume that $\phi \equiv 1$. The proof consists of three steps. 

	\noindent
	{\bf Step 1.} We consider the case when $k = 1$ and $\abs{\gamma_0} = d$. Take $\alpha > 0$ and $\alpha_1 > 0$,
	and let $\beta > \beta_0 = 3 \beta_1 + 3 d \alpha$, where $\beta_1$ is the value of $\beta_0$ determined by
	$\calA_1$ for $\alpha$ and $\alpha_1$. Suppose that $a$ and $q$ are coprime integers such that
	$1 \leq a \leq q$, and
	\[
		\bigg|\xi_d - \frac{a}{q}\bigg| \leq \frac{1}{q^2},
	\]
	with $(\log N)^\beta \leq q \leq N^d (\log N)^{-\beta}$. By Dirichlet's principle, there are coprime integers
	$a'$ and $q'$ such that $1 \leq a' \leq q' \leq N^d (\log N)^{-\frac{1}{3}\beta}$, and
	\[
		\bigg|\xi_d - \frac{a'}{q'}\bigg| \leq \frac{1}{q'} N^{-d} (\log N)^{\frac{1}{3}\beta}.
	\] 
	If $a'/q' \neq a/q$ then
	\[
		\frac{1}{q q'} \leq \bigg|\frac{a}{q} - \frac{a'}{q'} \bigg| \leq
		\bigg|\xi_d - \frac{a}{q}\bigg| + \bigg|\xi_d - \frac{a'}{q'}\bigg|
		\leq
		\frac{1}{q^2} + N^{-d} (\log N)^{\frac{1}{3}\beta}.
	\]
	Hence, we obtain
	\[
		\frac{1}{q'} \leq \frac{1}{q} + q N^{-d} (\log N)^{\frac{1}{3}\beta} 
		\leq (\log N)^{-\beta} + (\log N)^{-\frac{2}{3}\beta}.
	\]
	Thus
	\[
		(\log N)^{\frac{1}{3}\beta} \leq q' \leq N^d (\log N)^{-\frac{1}{3}\beta}.
	\]
	Observe that the last estimate is also valid if $q' = q$. Let $Q$ be an integer such that
	$1 \leq Q \leq (\log N)^{\alpha_1}$. Given $r \in \{1, \ldots, Q\}$, we set
	\[
		S_{r, N} = \sum_{\atop{n \in \calA_1}{n \equiv r \bmod Q}} e^{2\pi i P(n)} \ind{[1, N]}(n),
		\qquad\text{and}\qquad
		\tilde{S}_{r, N} = \sum_{\atop{n \in \calA_1}{n \equiv r \bmod Q}} e^{2\pi i \tilde{P}(n)} \ind{[1, N]}(n),
	\]
	where
	\[
		\tilde{P}(x) = \frac{a'}{q'} x^d + \ldots + \xi_1 x.
	\]
	We first show that
	\begin{equation}
		\label{eq:9}
		\sup_{1 \leq N' \leq N} \abs{\tilde{S}_{r, N'}} \leq C Q^{-1} N (\log N)^{-\alpha},
	\end{equation}
	for all $\beta > \beta_0$. If $1 \leq N' \leq N (\log N)^{-\alpha}$, then there is nothing to be proven. For
	$N (\log N)^{-\alpha} \leq N' \leq N$, we have
	\[
		(\log N')^{\frac{1}{3} \beta} \leq q' 
		\leq N^d (\log N)^{-\frac{1}{3} \beta} \leq (N')^d (\log N)^{d\alpha - \frac{1}{3} \beta}
		\leq
		(N')^d (\log N')^{d\alpha-\frac{1}{3} \beta},
	\]
	thus, by \eqref{eq:8}, we obtain 
	\[
		\abs{\tilde{S}_{r, N'}} \leq C Q^{-1} N' (\log N')^{-\alpha} \leq C' Q^{-1} N (\log N)^{-\alpha},
	\]
	proving \eqref{eq:9}. We now set $\theta = \xi_d - a'/q'$ and apply the partial summation to get
	\begin{align*}
		\abs{S_{r, N'}} &= \Big|\sum_{n = 1}^{N'} (\tilde{S}_{r, n} - \tilde{S}_{r, n-1}) 
		e^{2\pi i \theta n^d} \Big|
		\leq
		\abs{\tilde{S}_{r, N'}} + \abs{\tilde{S}_{r, 0}} +
		\sum_{n=1}^{N'-1} \abs{\tilde{S}_{r, n}} 
		\Big|e^{2\pi i \theta n^d} - e^{2 \pi i \theta (n+1)^d}\Big|.
	\end{align*}
	Since
	\[
		\abs{\theta} \leq \frac{1}{q'} N^{-d} (\log N)^{\frac{1}{3}\beta} \lesssim
		N^{-d},
	\]
	by \eqref{eq:9}, we obtain
	\begin{align*}
		\sup_{1 \leq N' \leq N} \abs{S_{r, N'}} &\lesssim
		Q^{-1} N (\log N)^{-\alpha} \sum_{n = 1}^{N-1} \abs{\theta} n^{d-1}\\
		&\lesssim
		Q^{-1} N(\log N)^{-\alpha},
	\end{align*}
	which finishes the proof of Step 1.

	\noindent
	{\bf Step 2.} We next consider $k \geq 2$ and $\gamma_0 \neq (0, \ldots, 0 , \ell, 0, \ldots, 0)$ for any
	$\ell \leq d$. Without loss of generality we may assume that $\gamma_0(1) \geq 1$. By the triangle inequality 
	followed by Cauchy--Schwarz inequality we get
	\begin{align}
		\nonumber
		\Big| \sum_{n \in \calA_1 \times \ldots \times \calA_k} e^{2 \pi i P(n)} \ind{\Omega}(n)\Big|
        &\leq
		\sum_{\tilde{n} \in \calA_2 \times \ldots \times \calA_k}
        \Big|\sum_{n_1 \in \calA_1} e^{2 \pi i P(n_1; \tilde{n})} \ind{\Omega}(n_1, \tilde{n})
        \Big| \\
		\label{eq:11}
		&\lesssim
		N^{(k-1)/2} 
		\bigg(
		\sum_{\tilde{n} \in \calA_2 \times \ldots \times \calA_k}
		\Big|\sum_{n_1 \in \calA_1} e^{2 \pi i P(n_1; \tilde{n})} \ind{\Omega}(n_1, \tilde{n})
        \Big|^2	
		\bigg)^{\frac{1}{2}}.
	\end{align}
	Next, we have
	\begin{align}
		\nonumber
		&\sum_{\tilde{n} \in \calA_2 \times \ldots \times \calA_k}
        \Big|\sum_{n_1 \in \calA_1} e^{2 \pi i P(n_1; \tilde{n})} \ind{\Omega}(n_1, \tilde{n})
        \Big|^2
		\leq
		\sum_{\tilde{n} \in \ZZ^{k-1}}
        \Big|\sum_{n_1 \in \calA_1} e^{2 \pi i P(n_1; \tilde{n})} \ind{\Omega}(n_1, \tilde{n})
        \Big|^2 \\
		\nonumber
		&\qquad\qquad\leq
		\sum_{n_1, n_1' \in \calA_1}
		\Big|
		\sum_{\tilde{n} \in \ZZ^{k-1}}
		e^{2 \pi i (P(n_1; \tilde{n}) - P(n_1'; \tilde{n}))} \ind{\Omega}(n_1, \tilde{n})
		\ind{\Omega}(n_1', \tilde{n})
		\Big| \\
		\label{eq:12}
		&\qquad\qquad
		\leq
		\sum_{n_1, n_1' \in \ZZ}
		\Big|
        \sum_{\tilde{n} \in \ZZ^{k-1}}
        e^{2 \pi i (P(n_1; \tilde{n}) - P(n_1'; \tilde{n}))} \ind{\Omega}(n_1, \tilde{n})
        \ind{\Omega}(n_1', \tilde{n})
        \Big|,
	\end{align}
	which, by another application of Cauchy--Schwarz inequality, is bounded by
	\begin{align*}
		N
		\bigg(
		\sum_{n_1, n_1' \in \ZZ}
        \Big|
        \sum_{\tilde{n} \in \ZZ^{k-1}}
        e^{2 \pi i (P(n_1; \tilde{n}) - P(n_1'; \tilde{n}))} \ind{\Omega}(n_1, \tilde{n})
        \ind{\Omega}(n_1', \tilde{n}) 
        \Big|^2
		\bigg)^{\frac{1}{2}}.
	\end{align*}
	Finally,
	\begin{equation}
		\label{eq:144}
		\begin{aligned}
		&
		\sum_{n_1, n_1' \in \ZZ}
        \Big|
        \sum_{\tilde{n} \in \ZZ^{k-1}}
        e^{2 \pi i (P(n_1; \tilde{n}) - P(n_1'; \tilde{n}))} 
		\ind{\Omega}(n_1, \tilde{n})
        \ind{\Omega}(n_1', \tilde{n}) 
        \Big|^2 \\
		&\qquad\qquad\qquad=
		\sum_{n_1, n_1' \in \ZZ} \sum_{\tilde{n}, \tilde{n}' \in \ZZ^{k-1}}
		e^{2\pi i Q(n_1, \tilde{n}, n_1', \tilde{n}')}
		\ind{\Theta}(n_1, \tilde{n}, n_1', \tilde{n}'),
		\end{aligned}
	\end{equation}
	where
	\[
		\Theta = \Big\{(x_1, \tilde{x}, x_1', \tilde{x}') \in \Omega \times \Omega
		: (x_1, \tilde{x}'), (x_1', \tilde{x}) \in \Omega \Big\},
	\]
	and
	\[
		Q(x_1, \tilde{x}, x_1', \tilde{x}') = P(x_1; \tilde{x}) - P(x_1'; \tilde{x}) 
		- P(x_1; \tilde{x}') 
		+ P(x_1'; \tilde{x}').
	\]
	Notice that the set $\Theta$ is a convex subset of a cube $[-N, N]^{2k}$. 
	Moreover, the polynomial $Q(x, x')$ has degree at least $\abs{\gamma_0}$ having a coefficient $\xi_{\gamma_0}$ in
	front of the monomial $x^{\gamma_0}$. Therefore, by \cite[Theorem 3.1]{mst1}, there are $\beta_0 > 0$ and $C > 0$
	such that
	\[
		\Big|
		\sum_{n_1, n_1' \in \ZZ} \sum_{\tilde{n}, \tilde{n}' \in \ZZ^{k-1}}
        e^{2\pi i Q(n_1, \tilde{n}, n_1', \tilde{n}')}
        \ind{\Theta}(n_1, \tilde{n}, n_1', \tilde{n}')
		\Big|
		\leq
		C N^{2 k} (\log N)^{-4 \alpha},
	\]
	provided that $\beta > \beta_0$.
	Hence, by \eqref{eq:11}, \eqref{eq:12} and \eqref{eq:144} we obtain
	\[
		\Big| \sum_{n \in \calA_1 \times \ldots \times \calA_k} e^{2 \pi i P(n)} \ind{\Omega}(n)\Big|
        \lesssim
		N^k (\log N)^{-\alpha}.
	\]

	\noindent
	{\bf Step 3.} Suppose that $k \geq 1$ and $\gamma_0 = (0, \ldots, 0, \ell, 0, \ldots, 0)$ for $1 \leq \ell \leq d$. 
	Without loss of generality we may assume that $\gamma_0 = (\ell, \ldots, 0)$. 
	The proof is by a backward induction over $\ell \in \{1, \ldots, d\}$. We write
	\begin{equation}
		\label{eq:19}
		\Big| \sum_{n \in \calA_1 \times \ldots \times \calA_k} e^{2 \pi i P(n)} \ind{\Omega}(n) \Big|
		\leq
		\sum_{\tilde{n} \in \calA_2 \times \ldots \times \calA_k} 
		\Big|\sum_{n_1 \in \calA_1} e^{2 \pi i P(n_1; \tilde{n})} \ind{\Omega}(n_1, \tilde{n}) 
		\Big|.
	\end{equation}
	If $\ell = d$ the conclusion follows by Step 1. Suppose that $\ell < d$. In view of Step 2 and the inductive
	hypothesis, the estimate holds for any $\abs{\gamma_0} = j$, $\ell < j \leq d$. Let $\beta_1$ be the largest 
	value of $\beta_0$ among those that were determined in Step 2 and resulting from the inductive hypothesis.
	By Dirichlet's principle, for each $\ell < \abs{\gamma} \leq d$, we select coprime integers $a_\gamma$ and $q_\gamma$,
	such that $1 \leq a_\gamma \leq q_\gamma \leq N^{\abs{\gamma}} (\log N)^{-\beta_1}$, satisfying
	\[
		\bigg|\xi_\gamma - \frac{a_\gamma}{q_\gamma} \bigg| \leq 
		\frac{1}{q_\gamma} N^{-\abs{\gamma}} (\log N)^{\beta_1}.
	\]
	If for some $\gamma \in \Gamma$, $\ell < \abs{\gamma} \leq d$ we have $(\log N)^{\beta_1} \leq q_\gamma$, then
	the conclusion follows by the inductive hypothesis or Step 2. Otherwise, we set
	$\theta_\gamma = \xi_\gamma - a_\gamma/q_\gamma$ and $Q = \lcm\{q_\gamma : \ell < \abs{\gamma} \leq d \}$. We have
	\begin{equation}
		\label{eq:17}
		\abs{\theta_\gamma} \leq \frac{1}{q_\gamma} N^{-\abs{\gamma}} (\log N)^{\beta_1},
	\end{equation}
	and
	\[
		Q \leq (\log N)^{\alpha_1},
	\]
	where $\alpha_1 = \beta_1 \cdot \#\{\gamma \in \NN_0^k : \ell < \abs{\gamma} \leq d \}$. We have
	\begin{equation}
		\label{eq:22}
		\begin{aligned}
		\sum_{\tilde{n} \in \calA_2 \times \ldots \times \calA_k}
        \Big|\sum_{n_1 \in \calA_1} e^{2 \pi i P(n_1; \tilde{n})} \ind{\Omega}(n_1, \tilde{n})
        \Big|
		&\leq 
		\sum_{\tilde{n} \in \ZZ^{k-1}} \Big|\sum_{n_1 \in \calA_1} e^{2\pi i P(n_1; \tilde{n})}
		\ind{\Omega}(n_1, \tilde{n}) 
		\Big|\\
		&\leq
		\sum_{(r_1, \tilde{r}) \in \NN_Q^k} \sum_{\atop{\tilde{n} \in \ZZ^{k-1}}{\tilde{n} \equiv \tilde{r} \bmod Q}}
		\Big|\sum_{\atop{n_1 \in \calA_1}{n_1 \equiv r_1 \bmod Q}} e^{2\pi i P(n_1; \tilde{n})} 
		\ind{\Omega}(n_1, \tilde{n}) \Big|.
		\end{aligned}
	\end{equation}
	Setting
	\[
		P_0(x) = \sum_{0 < \abs{\gamma} \leq \ell} \xi_\gamma x^\gamma,
	\]
	we can write
	\begin{align*}
		P(Q m + r) 
		&\equiv
		\sum_{\ell < \abs{\gamma} \leq d} \xi_\gamma (Q m + r)^\gamma + P_0(Q m + r) \pmod 1 \\
		&\equiv 
		\sum_{\ell < \abs{\gamma} \leq d} \frac{a_\gamma}{q_\gamma} r^\gamma + 
		\sum_{\ell < \abs{\gamma} \leq d} \theta_\gamma (Q m + r)^\gamma + P_0(Q m + r) \pmod 1.
	\end{align*}
	Thus
	\begin{equation}
		\label{eq:18}
		\begin{aligned}
		&\sum_{(r_1, \tilde{r}) \in \NN_Q^k} \sum_{\atop{\tilde{n} \in \ZZ^{k-1}}{\tilde{n} \equiv \tilde{r} \bmod Q}}
        \Big|\sum_{\atop{n_1 \in \calA_1}{n_1 \equiv r_1 \bmod Q}} e^{2\pi i P(n_1; \tilde{n})}
        \ind{\Omega}(n_1, \tilde{n}) \Big| \\
		&\qquad\qquad
		=
		\sum_{(r_1, \tilde{r}) \in \NN_Q^k} \sum_{\atop{\tilde{n} \in \ZZ^{k-1}}{\tilde{n} \equiv \tilde{r} \bmod Q}}
        \Big|
		\sum_{n_1 \in \ZZ}
		A_{n_1, \tilde{n}} \big(S_{n_1, \tilde{n}}^{(r)} - S_{n_1-1, \tilde{n}}^{(r)}\big)
		\Big|,
		\end{aligned}
	\end{equation}
	where
	\[
		A_{n_1, \tilde{n}} = e^{2 \pi i \sum_{\ell < \abs{\gamma} \leq d} \theta_\gamma (n_1, \tilde{n})^\gamma},
	\]
	and
	\[
		S_{n_1, \tilde{n}} ^{(r)}=
		\sum_{\atop{n_1' \leq n_1}{n_1' \equiv r_1 \bmod Q}} e^{2\pi i P_0(n_1'; \tilde{n})}
		\ind{\Omega}(n_1', \tilde{n}) \ind{\calA_1}(n_1').
	\]
	To estimate the inner sum on the right-hand side of \eqref{eq:18}, we apply the partial summation. Setting
	\[
		(M_0, M_0+1, \ldots, M_1) = \big\{n_1 \in \ZZ : (n_1, \tilde{n}) \in \Omega\big\},
	\]
	we can write
	\[
		\Big|\sum_{n_1=M_0}^{M_1} A_{n_1, \tilde{n}}  \big(S_{n_1, \tilde{n}}^{(r)} - S_{n_1-1, \tilde{n}}^{(r)} \big)\Big|
		\leq
		\abs{S_{M_1, \tilde{n}}^{(r)}} + \sum_{n_1=M_0}^{M_1-1}
		\abs{S_{n_1, \tilde{n}}^{(r)}} \cdot \abs{A_{n_1, \tilde{n}} - A_{n_1+1, \tilde{n}}}.
	\]
	By \eqref{eq:17}, for $(n_1, \tilde{n}) \in \Omega$ we have
	\[
		\big|A_{n_1, \tilde{n}} - A_{n_1+1, \tilde{n}} \big| \lesssim
		\sum_{\ell < \abs{\gamma} \leq d} \abs{\theta_\gamma} N^{\abs{\gamma}-1}
		\lesssim N^{-1}(\log N)^{\beta_1}.
	\]
	Recall that $\gamma_0 = (\ell, 0, \ldots, 0)$ and
	\[
		\bigg|\xi_{\gamma_0} - \frac{a}{q}\bigg| \leq \frac{1}{q^2},
	\]
	thus, by Step 1 applied to $S_{n_1, \tilde{n}}^{(r)}$ we obtain
	\[
		\sup_{M_0 \leq n_1 \leq M_1} 
		\abs{S_{n_1, \tilde{n}}^{(r)}} \lesssim N Q^{-1} (\log N)^{-\alpha-\beta_1},
	\]
	whenever $\beta > \beta_2$, where $\beta_2$ is the value of $\beta_0$ determined in Step 1 for
    $\alpha+\beta_1$ and $\alpha_1$. Hence, 
	\[
		\Big|\sum_{n_1=M_0}^{M_1} A_{n_1, \tilde{n}}  \big(S_{n_1, \tilde{n}}^{(r)} - S_{n_1-1, \tilde{n}}^{(r)} \big)\Big|
        \lesssim
		Q^{-1} N(\log N)^{-\alpha}.
	\]
	Consequently, by \eqref{eq:19}, \eqref{eq:22} and \eqref{eq:18} we get
	\begin{align*}
		\Big|
		\sum_{n \in \calA_1 \times \ldots \times \calA_k} e^{2\pi i P(n)} \ind{\Omega}(n)
		\Big|
		&\lesssim
		N^k (\log N)^{-\alpha},
	\end{align*}
	provided that $\beta > \beta_0 = \max\{\beta_1, \beta_2\}$.
	
	Finally, we deal with a general $\phi$. Given $\alpha$, let $\beta_0$ be such that
	\begin{equation}
		\label{eq:30}
		\sup_{\atop{\Omega \subseteq [-N, N]^k}{\Omega \text{ convex}}}
		\left|\sum_{n \in \calA_1 \times \ldots \times \calA_k} e^{2\pi i P(n)} \ind{\Omega}(n) \right|
		\leq
		C N^k (\log N)^{-(k+1) \alpha - k}.
	\end{equation}
	We divide the cube $[-N, N]^k$ into $J$ closed cubes $(Q_j : 1 \leq j \leq J)$ with sides parallel to the axes
	and having side lengths $\calO\big(N (\log N)^{-\alpha-1} \big)$. Thus
	\begin{equation}
		\label{eq:37}
		J = \calO\big((\log N)^{k(\alpha+1)} \big).
	\end{equation}
	By $Q_j^\mathrm{o}$ we denote the interior of $Q_j$. We assume that $Q_j^\mathrm{o}$ are disjoint with the axes.
	Let $n_j$ be the vertex of $Q_j$ at the largest distance to the origin. Then by the mean value theorem and
	\eqref{eq:7}, we have
	\begin{align*}
		\left|
		\sum_{n \in \calA_1 \times \ldots \times \calA_k} e^{2\pi i P(n)} \ind{Q_j^\mathrm{o} \cap \Omega}(n) 
		\big(\phi(n) - \phi(n_j)\big)
		\right|
		&\lesssim
		\sum_{n \in Q_j} \sup_{t \in [0, 1]} \Big|\nabla \phi\big(t n + (1- t) n_j\big)\Big| \cdot \big|n - n_j\big| \\
		&\lesssim
		N (\log N)^{-\alpha-1} \sum_{n \in Q_j} \big(1 + \abs{n}\big)^{-1}, 
	\end{align*}
	thus
	\begin{equation}
		\label{eq:35}
		\left|
		\sum_{j = 1}^J
        \sum_{n \in \calA_1 \times \ldots \times \calA_k} e^{2\pi i P(n)} \ind{Q_j^\mathrm{o} \cap \Omega}(n) 
        \big(\phi(n) - \phi(n_j)\big)
        \right|
		\lesssim
		N^k (\log N)^{-\alpha}.
	\end{equation}
	On the other hand, in view of \eqref{eq:30}, we get
	\[
		\left|
		\phi(n_j) 
		\sum_{n \in \calA_1 \times \ldots \times \calA_k} e^{2\pi i P(n)} \ind{Q_j^\mathrm{o} \cap \Omega}(n)
		\right|
		\lesssim
		N^k (\log N)^{-(k+1)\alpha - k},
	\]
	hence, by \eqref{eq:37},
	\[
		\left|
		\sum_{j = 1}^J
        \phi(n_j)
        \sum_{n \in \calA_1 \times \ldots \times \calA_k} e^{2\pi i P(n)} \ind{Q_j^\mathrm{o} \cap \Omega}(n)
        \right|
		\lesssim
		N^k (\log N)^{-\alpha},
	\]
	which together with \eqref{eq:35} completes the proof.
\end{proof}
We next apply Theorem \ref{thm:1} to get the following variant of Weyl--Vinogradov's inequality. 
\begin{theorem}
	\label{thm:9}
	Let $\xi \in \TT^d$. Assume that there is a multi-index $\gamma_0 \in \Gamma$, such that
	\[
		\bigg|\xi_{\gamma_0} - \frac{a}{q} \bigg| \leq \frac{1}{q^2},
	\]
	for some coprime integers $a$ and $q$ such that $1 \leq a \leq q$. Then for all $\alpha > 0$, there is
	$\beta_\alpha > 0$, so that for any $\beta > \beta_\alpha$, if
	\[
		(\log N)^\beta \leq q \leq N^{\abs{\gamma_0}} (\log N)^{-\beta},
	\]
	then 
	\[
		\sup_{\atop{\Omega \subseteq [-N, N]^k}{\Omega \text{ convex}}}
		\bigg|
		\sum_{n \in \NN^{k'}} \sum_{p \in \PP^{k''}} e^{2\pi i \sprod{\xi}{\calQ(n, p)}}
		\ind{\Omega}(n, p) \phi(n, p)
		\bigg(\prod_{j = 1}^{k''} \log p_j \bigg) 
		\bigg|
		\leq C N^k (\log N)^{-\alpha}.
	\]
	The constant $C$ depends on $\alpha$, $d$ and a constant in \eqref{eq:7}.
\end{theorem}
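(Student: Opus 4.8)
The plan is to derive Theorem~\ref{thm:9} directly from Theorem~\ref{thm:1} by absorbing the product of logarithms over the prime variables through a layer-cake decomposition. Both $\NN$ (by the same computation that was carried out for $\ZZ$) and $\PP$ are polynomially regular, so Theorem~\ref{thm:1} applies to sums of exactly the shape occurring here: one takes $\calA_1 = \cdots = \calA_{k'} = \NN$ and $\calA_{k'+1} = \cdots = \calA_k = \PP$, the polynomial $P(x) = \sprod{\xi}{\calQ(x)} = \sum_{\gamma \in \Gamma} \xi_\gamma x^\gamma$, and the distinguished multi-index $\gamma_0 \in \Gamma$ from the hypothesis, which satisfies the Dirichlet-type condition $\abs{\xi_{\gamma_0} - a/q} \le q^{-2}$ and the size restriction on $q$ precisely as required by Theorem~\ref{thm:1}; the amplitude $\phi$ already obeys \eqref{eq:7}. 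The only feature of Theorem~\ref{thm:9} not covered directly is the weight $\prod_{j=1}^{k''} \log p_j$.

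To remove it, I would first observe that whenever $(n, p) \in \Omega \subseteq [-N, N]^k$ the prime coordinates satisfy $2 \le p_j \le N$, hence $0 \le \log p_j \le \log N$, so that $\log p_j = \int_0^{\log N} \ind{\{e^t \le p_j\}} \: {\: \rm d}t$ on the relevant range, and therefore
\[
	\ind{\Omega}(n, p) \prod_{j = 1}^{k''} \log p_j
	= \int_{[0, \log N]^{k''}} \ind{\Omega_t}(n, p) {\: \rm d}t,
	\qquad
	\Omega_t := \Omega \cap \bigcap_{j = 1}^{k''} \big\{x \in \RR^k : x_{k'+j} \ge e^{t_j}\big\}.
\]
Being the intersection of the convex set $\Omega$ with finitely many half-spaces, each $\Omega_t$ is again a convex subset of $[-N, N]^k$. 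Substituting this identity into the sum of Theorem~\ref{thm:9} and interchanging the (finite) summation over $(n, p)$ with the integral in $t$, that sum becomes
\[
	\int_{[0, \log N]^{k''}}
	\bigg( \sum_{n \in \NN^{k'}} \sum_{p \in \PP^{k''}}
	e^{2 \pi i \sprod{\xi}{\calQ(n, p)}} \ind{\Omega_t}(n, p) \phi(n, p) \bigg)
	{\: \rm d}t.
\]

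Finally, given $\alpha > 0$ I would let $\beta_\alpha$ be the threshold $\beta_0$ furnished by Theorem~\ref{thm:1} for the exponent $\alpha + k''$. Then, for every $\beta > \beta_\alpha$ and every $q$ with $(\log N)^\beta \le q \le N^{\abs{\gamma_0}}(\log N)^{-\beta}$, Theorem~\ref{thm:1}---whose conclusion is already a supremum over convex subsets of $[-N, N]^k$---bounds the inner sum by $C N^k (\log N)^{-\alpha - k''}$, uniformly in $t$. Integrating over the box $[0, \log N]^{k''}$, whose volume is $(\log N)^{k''}$, yields the desired bound $C N^k (\log N)^{-\alpha}$, with $C$ inheriting its dependence on $\alpha$, $d$ and the constant in \eqref{eq:7} from Theorem~\ref{thm:1}. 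There is no genuine difficulty here beyond bookkeeping; the one point deserving attention is the preservation of convexity of the truncated regions $\Omega_t$, which is exactly what licenses the uniform application of Theorem~\ref{thm:1}. (Equivalently, one may strip off the weights $\log p_j$ one variable at a time by Abel summation, which again produces only convex truncations $\{p_j \le m\} \cap \Omega$ and costs one factor of $\log N$ per prime variable.)
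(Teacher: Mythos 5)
Your proposal is correct. The one point you rightly flag as essential --- that the truncations of $\Omega$ by half-spaces in the prime coordinates remain convex subsets of $[-N,N]^k$, so that Theorem~\ref{thm:1} applies uniformly --- is sound, and the layer-cake identity $\log p_j = \int_0^{\log N} \ind{\{e^{t}\le p_j\}}\,{\rm d}t$ is valid on the support of $\ind{\Omega}$ since there $2\le p_j\le N$; the interchange of the finite sum with the integral over the box is harmless, and calibrating $\beta_\alpha$ to the exponent $\alpha+k''$ in Theorem~\ref{thm:1} exactly compensates the volume factor $(\log N)^{k''}$.

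The paper organizes the same reduction differently: it proves Claim~\ref{clm:4} by backward induction on the number $r$ of remaining logarithmic weights, stripping one factor $\log p_r$ at a time by Abel (partial) summation, which produces the truncated convex regions $\Omega\cap\{p_r\le m\}$ and costs one factor of $\log N$ per prime variable --- precisely the discrete counterpart of your continuous threshold decomposition, and the route you mention parenthetically at the end. Your integral representation handles all $k''$ prime variables in one stroke, dispenses with the induction and the telescoping bookkeeping, and makes the total loss $(\log N)^{k''}$ completely transparent; the paper's partial-summation scheme is the version that recurs elsewhere in the article (e.g.\ in Proposition~\ref{prop:3} and Propositions~\ref{prop:1}--\ref{prop:2}), so it keeps the arguments uniform in style, but neither approach buys anything quantitatively over the other: both rest on the same two pillars, namely the uniformity of Theorem~\ref{thm:1} over convex $\Omega\subseteq[-N,N]^k$ and the polynomial regularity of $\NN$ and $\PP$, and both yield the stated dependence of $C$ on $\alpha$, $d$ and the constant in \eqref{eq:7}.
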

\begin{proof}
	We claim that the following holds true.
	\begin{claim}
		\label{clm:4}
		For all $\alpha > 0$, there is $\beta_\alpha > 0$, such that for all $\beta > \beta_\alpha$,
		$N \in \NN$, and $r \in \{0, \ldots, k''\}$, if there is a multi-index
		$\gamma_0 \in \Gamma$, such that
		\[
			\bigg|\xi_{\gamma_0} -\frac{a}{q}\bigg| \leq \frac{1}{q^2},
		\]
		for some coprime integers $a$ and $q$, such that $1 \leq a \leq q$, and
		\[
			(\log N)^\beta \leq q \leq N^{\abs{\gamma_0}} (\log N)^{-\beta},
		\]
		then
		\[
			\sup_{\atop{\Omega \subseteq [-N, N]^k}{\Omega \text{ convex}}}
			\bigg|
			\sum_{n \in \NN^{k'}}
			\sum_{p \in \PP^{k''}}
        	e^{2\pi i \sprod{\xi}{\calQ(n, p)}} \ind{\Omega}(n, p) \phi(n, p)
        	\bigg(\prod_{j = r+1}^{k''} \log p_j\bigg)
			\bigg|
			\leq C N^k (\log N)^{-\alpha+k'' - r}.
		\]
	\end{claim}
	The proof is by a backward induction over $r$. For $r = k''$ the assertion follows by Theorem \ref{thm:1}. For 
	$r \in \{1, \ldots, k''\}$, $N \in \NN$ and $m \in \{1, \ldots, N\}$, we set
	\[
		S^{(r)}_{N, m}(\xi)
		= 
		\sum_{n \in \NN^{k'}}
		\sum_{\atop{p \in \PP^{k''}}{p_r \leq m}}
		e^{2\pi i \sprod{\xi}{\calQ(n, p)}} \ind{\Omega}(n, p) \phi(n, p)
		\bigg(\prod_{j = r+1}^{k''} \log p_j\bigg),
	\]
	and
	\[
		S^{(0)}_{N, N}(\xi)
		=
        \sum_{n \in \NN^{k'}}
        \sum_{p \in \PP^{k''}}
        e^{2\pi i \sprod{\xi}{\calQ(n, p)}} \ind{\Omega}(n, p) \phi(n, p)
        \bigg(\prod_{j = 1}^{k''} \log p_j\bigg),
	\]
	where $\Omega$ is a convex subset of $[-N, N]^k$. For $0 \leq r < k''$, by the partial summation, we can write
	\begin{align*}
		S_{N, N}^{(r)} &=
		\sum_{m = 1}^N
		\left(S_{N, m}^{(r+1)} - S_{N, m-1}^{(r+1)}\right) \log m\\
		&=
		S_{N, N}^{(r+1)} (\log N) + \sum_{m = 1}^{N-1} S_{N, m}^{(r+1)} \big(\log(m) - \log(m+1)\big).
	\end{align*}
	Hence, by the inductive hypothesis we get
	\begin{align*}
		\abs{S_{N, N}^{(r)}} 
		& \leq 
		\abs{S_{N, N}^{(r+1)}} (\log N) 
		+
		\sum_{m = 1}^{N-1} 
		\left| S_{N, m}^{(r+1)} \right| m^{-1} \\
		& \leq
		C' N^k (\log N)^{-\alpha + k'' - r },
	\end{align*}
	proving the claim. Now, the theorem follows by Claim \ref{clm:4} for $r = 0$.
\end{proof}

\subsection{Gaussian sums}
\label{sec:6}
Given $q \in \NN$ and $a \in \mathbf{A}_q$, the \emph{Gaussian sum} is
\[
	G(a/q) = 
	\frac{1}{q^{k'}} \cdot \frac{1}{\varphi(q)^{k''}} \sum_{x \in \NN^{k'}_q} \sum_{y \in A_q^{k''}} 
	e^{2\pi i \sprod{(a/q)}{\calQ(x, y)}},
\]
where $\varphi$ is Euler's totient function, i.e $\varphi(q)$ equals to the number of elements in $A_q$.
The following theorem provides a very useful estimate on the Gaussian sums.
\begin{theorem}
	\label{thm:2}
	There are $C > 0$ and $\delta > 0$ such that for all $q \in \NN$ and $a \in \bfA_q$,
	\[
		\big|G(a/q) \big| \leq C q^{-\delta}.
	\]
\end{theorem}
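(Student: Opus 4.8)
The plan is to exploit multiplicativity of the Gaussian sum in $q$ and then obtain a power-saving bound on each prime-power factor by standard estimates for complete (and restricted-to-units) exponential sums. First I would establish that $G(a/q)$ factors as a product over the prime-power decomposition $q = p_1^{k_1} \cdots p_m^{k_m}$: writing $\calQ_\gamma(x,y) = (x,y)^\gamma$ and using the Chinese Remainder Theorem to reparametrise $x \in \NN_q^{k'}$, $y \in A_q^{k''}$ coordinate-wise through the isomorphism $\ZZ/q\ZZ \cong \prod_i \ZZ/p_i^{k_i}\ZZ$ (which restricts to $A_q \cong \prod_i A_{p_i^{k_i}}$ and sends $a/q$ to a sum of fractions $a_i/p_i^{k_i}$ with $\gcd(a_i,p_i)$ controlled), the sum splits and $\varphi$ is multiplicative, so $G(a/q) = \prod_i G(a_i/p_i^{k_i})$ where each $a_i$ still satisfies $\gcd(p_i, (a_i)_1,\dots,(a_i)_k)=1$ after relabelling. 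Hence it suffices to prove $|G(a/p^\ell)| \le C p^{-\delta}$ for a fixed $\delta>0$ uniformly in primes $p$, exponents $\ell \ge 1$, and admissible numerators $a$; then $|G(a/q)| \le \prod_i C p_i^{-\delta}$, and choosing $\delta$ small this yields $\le C' q^{-\delta'}$ after absorbing the finitely-many ``small prime'' factors where $Cp^{-\delta}\ge 1$ into the constant.

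For the prime-power bound I would separate the contribution of the $k'$ ``full'' variables from the $k''$ ``unit'' variables. Since $\gcd(p, a_1,\dots,a_k)=1$, there is a coordinate $j_0$ with $p \nmid a_{\gamma}$ for some $\gamma$ having a nontrivial entry; the monomial $\calQ_\gamma$ then depends genuinely on one of the variables. For the free variables $x_i$ ranging over all of $\ZZ/p^\ell\ZZ$, a one-dimensional Weil-type / van der Corput estimate for complete exponential sums of a polynomial with a unit coefficient gives square-root cancellation $\lesssim p^{\ell(1 - 1/\deg)}$ in that variable (the precise exponent depending only on $d=\deg\calQ$), while the remaining free variables are bounded trivially by $p^\ell$ each; for the unit variables $y_i \in A_{p^\ell}$ one uses the analogous estimate for sums restricted to units, again trivial in all but the distinguished variable. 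After dividing by the normalisation $p^{\ell k'} \varphi(p^\ell)^{k''} \asymp p^{\ell k}$, the surviving factor is a genuine power saving $p^{-c\ell}$ with $c = c(d) > 0$; since $\ell \ge 1$ this is $\le p^{-c}$, which is the desired $\le C p^{-\delta}$ with $\delta = c$. When $p$ is so small that $p^{-\delta} \ge 1$, the trivial bound $|G(a/q)| \le 1$ combined with $q^{-\delta}$ bounded below by a constant on that finite set handles the remaining cases.

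The main obstacle I anticipate is making the single-variable cancellation estimate genuinely uniform in the exponent $\ell$ and robust to the presence of the other $k-1$ variables: for $\ell = 1$ this is Weil's bound for the number of points on a curve (or the classical exponential-sum estimate for a polynomial mod $p$), but for $\ell \ge 2$ one must pass to the lifting / stationary-phase argument for exponential sums mod $p^\ell$, where the bound degenerates unless one isolates the lowest-degree term in the distinguished variable whose coefficient is a $p$-adic unit; some care is needed when $p$ divides a coefficient (one factors out the largest power of $p$ dividing the whole polynomial in that variable, reducing $\ell$, and iterates). The interaction with the unit constraint $y \in A_{p^\ell}$ adds a minor complication resolved by inclusion-exclusion over divisors of $p$, i.e. writing $\ind{A_{p^\ell}}(y) = \sum_{e \mid p} \mu(e)\,\ind{e \mid y}$, which for prime $p$ is just two terms and reduces the restricted sum to a difference of two complete sums over arithmetic progressions — each again estimable by the same mechanism. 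Once the uniform prime-power estimate is in hand, reassembling via multiplicativity and choosing $\delta$ is routine.
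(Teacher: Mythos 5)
Your overall skeleton --- reduction to $q=p^\ell$ by CRT/multiplicativity, inclusion--exclusion to remove the unit constraint, then a power saving at prime powers --- coincides with the first two steps of the paper's proof, but the crucial step is not established. The hypothesis $a \in \bfA_{p^\ell}$ only provides \emph{one} index $\gamma_0$ with $p \nmid a_{\gamma_0}$, and your plan to freeze all variables but one and invoke a one--dimensional Weil/van der Corput bound breaks down exactly there. First, the distinguished monomial $x^{\gamma_0}$ in general involves several variables, so after freezing the others the relevant coefficient of the one--variable polynomial is itself a polynomial in the frozen variables and may be divisible by a high power of $p$ on a set of frozen values that you never estimate; nothing in the hypotheses prevents all the remaining coefficients from being highly divisible by $p$, so ``factor out the largest power of $p$ and iterate'' does not control this degeneration. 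Second --- and this is the case the paper works hardest on --- $\gamma_0$ may be a \emph{linear} index, in which case there is no square--root cancellation to appeal to: the saving must come from exact orthogonality of linear characters, and whether that survives depends on how the denominators of the degree~$\geq 2$ coefficients compare with those of the linear ones. The paper resolves this by showing, after the inclusion--exclusion over $\sigma$, that either the sum vanishes identically (the lcm of the linear denominators cannot strictly exceed the lcm $q$ of the higher--degree ones, the claim $q=Q$), or else some coefficient of degree $\geq 2$ carries a denominator $q_{\gamma_0}$ in the intermediate range $p^{j\epsilon} \leq q_{\gamma_0} \leq p^{j(\abs{\gamma_0}-\epsilon)}$, at which point a genuinely multidimensional Weyl--type estimate for complete sums over boxes (\cite[Proposition 3]{SW0}) is quoted; the purely linear situation is handled separately via Ramanujan sums, giving $\abs{G(a/q)} \leq \varphi(q)^{-1}$. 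Your sketch contains no substitute for either mechanism, so the uniform prime--power bound remains unproved.

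There is also a quantitative slip in the reassembly. A local bound $\abs{G(a/p^\ell)} \leq C p^{-\delta}$ uniform in $\ell$, i.e.\ with the saving only in $p$ and not in $p^\ell$, does not yield $q^{-\delta'}$: take $q = 2^\ell$ with $\ell$ large. You do at one point obtain the correct shape $p^{-c\ell} = (p^\ell)^{-c}$, but then discard it (``since $\ell \geq 1$ this is $\leq p^{-c}$''). You must keep the saving in $p^\ell$, and then control the accumulated constants either by $C^{\omega(q)} \lesssim_\epsilon q^{\epsilon}$ (as the paper does, via $\omega(q) \lesssim \log q/\log\log q$) or by noting that only boundedly many primes satisfy $Cp^{-c} \geq 1$; with that repair the gluing step is routine, but as written it does not deliver the stated conclusion.
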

\begin{proof}
	Let us recall that for $a, q \in \NN$, (see e.g. \cite[Theorem 4.1]{mv})
	\begin{equation}
		\label{eq:128}
		\frac{1}{ \varphi(q)} 
		\sum_{x \in A_q} e^{2\pi i a x / q} = 
		\frac{\mu(q/\gcd(a, q))}{\varphi(q/\gcd(a, q))},
	\end{equation}
	wherein $\mu(q)$ is M\"obius function defined for $q = p_1^{j_1} \cdots p_m^{j_m}$, $p_j$ are distinct
	prime numbers, as
	\[
		\mu(q) = 
		\begin{cases}
			(-1)^m & \text{if } j_1 = \cdots = j_m = 1, \\
			0 & \text{otherwise.}
		\end{cases}
	\]
	For each $\epsilon > 0$ there is $C_\epsilon > 0$, such that (see e.g. \cite[Theorem 2.9]{mv})
	\begin{equation}
		\label{eq:129}
		\varphi(q) \geq C_\epsilon q^{1-\epsilon}.
	\end{equation}
	We start the proof of the theorem by considering $d = 1$. Then
	\[
		G(a/q) =
        \prod_{\gamma = (\gamma', 0) \in \Gamma}
        \bigg(\frac{1}{q} \sum_{x = 1}^q e^{2\pi i a_\gamma x /q} \bigg)
        \prod_{\gamma = (0, \gamma'') \in \Gamma}
        \bigg(\frac{1}{\varphi(q)} \sum_{x \in A_q} e^{2\pi i a_\gamma x /q} \bigg).
	\]
	Suppose that $k' \geq 1$. If $G(a/q) \neq 0$ then
	$q \mid a_\gamma$ for all $\gamma = (\gamma', 0) \in \Gamma$. Since $a \in \bfA_q$,
	we must have $k'' \geq 1$. For $\gamma = (0, \gamma'') \in \Gamma$, we set $b_\gamma/q_\gamma = a_\gamma/q$, where
	$(b_\gamma, q_\gamma) = 1$. By \eqref{eq:128}, $G(a/q) \neq 0$ entails that each $q_\gamma$ is square-free. Since
	for any $p$ prime factor $q$ there is $\gamma = (0, \gamma'') \in \Gamma$ such that $p \not\mid q/q_\gamma$,
	we conclude that $q$ is square-free. Because $q = \lcm\big(q_\gamma : \gamma = (0, \gamma'') \in \Gamma \big)$,
	\[
		\abs{G(a/q)}
		\leq 
		\prod_{\gamma = (0, \gamma'') \in \Gamma} 
		\frac{1}{\varphi(q_\gamma)}
		\leq
		\frac{1}{\varphi(q)},
	\]
	which together with \eqref{eq:129} gives
	\[
		\big| G(a/q) \big|
		\leq C_\epsilon q^{\epsilon-1}.
	\]
	Next, let us consider the case $d \geq 2$. For a given polynomial $P$ on $\RR^k$ with integral coefficients we 
	define
	\[
		S(q, P) = \sum_{x \in \NN_q^{k'}} \sum_{y \in A_q^{k''}} \exp\big(2\pi i P(x, y)/q\big).
	\]
	Let 
	\[
		P(x) = \sum_{0 < \abs{\gamma} \leq d} a_\gamma x^\gamma,
	\]
	where $a \in \bfA_q$. Our aim is to show that there are $C > 0$ and $\delta > 0$ such that for all $q \in \NN$ and 
	$a \in \bfA_q$,
	\begin{equation}
		\label{eq:127}
		\big| S(q, P) \big| \leq C q^{k-\delta}.
	\end{equation}
	First, observe that for $q = q_1 q_2$, $(q_1, q_2) = 1$, we have
	\[
		S(q, P) = S\big(q_1, q_2^{-1} P(q_2 {\: \cdot \:})\big) S\big(q_2, q_1^{-1} P(q_1 {\: \cdot \:})\big).
	\]
	Therefore, if $q = p_1^{j_1} \cdots p_m^{j_m}$ for some distinct prime numbers $p_j$, then
	\[
		S(q, P) = \prod_{s = 1}^m S\big(p_s^{j_s}, P_s\big),
	\]
	where
	\[
		P_s(x) = \frac{p_s^{j_s}}{q} P\bigg(\frac{q}{p_s^{j_s}} x \bigg).
	\]
	Since $\omega(q)$, the number of distinct prime factors of $q$, satisfies (see e.g. \cite[Theorem 2.10]{mv})
	\[
		\omega(q) \leq C \frac{\log q}{\log \log q},
	\]
	we have
	\begin{align*}
		2^{\omega(q)} &\leq C_\epsilon' q^{\epsilon}.
	\end{align*}
	Hence, it is enough to proof \eqref{eq:127} for $q = p^j$ with $p$ being a prime number and $j \geq 1$. Since for any
	arithmetic function $F$, we have
	\[
		\sum_{x \in A_{p^j}} F(x) = \sum_{x \in \NN_{p^j}} F(x) - \sum_{x \in \NN_{p^{j-1}}} F(p x),
	\]
	if $j \geq 2$ we write
	\[
		S(p^j, P) = 
		\sum_{\sigma \in \{0, 1\}^{k''}}
		(-1)^\sigma
		\sum_{(x', x'') \in \Omega^\sigma} \exp\big( 2\pi i P(x', p^\sigma x'')/p^j \big),
	\]
	where for $\sigma \in \{0, 1\}^{k''}$, we have set
	\[
		\Omega^\sigma = \NN_{p^j}^{k'} \times \NN_{p^{j-\sigma_1}} \times \ldots \times \NN_{p^{j-\sigma_{k''}}}.
	\]
	Fix $\sigma \in \{0, 1\}^{k''}$. For each $\gamma \in \Gamma$, we define 
	\[
		\frac{b_\gamma}{q_\gamma} = 
		\frac{a_\gamma p^{\sprod{\sigma}{\gamma''}}}{p^j},
	\]
	where $(b_\gamma, q_\gamma) = 1$. Let
	\[
		q = \lcm\big(q_\gamma : \gamma \in \Gamma, \abs{\gamma} \geq 2\big), \qquad\text{and}\qquad
		Q = \lcm\big(q_\gamma : \gamma \in \Gamma \big).
	\]
	Observe that
	\begin{equation}
		\label{eq:41}
		\sum_{(x', x'') \in \Omega^\sigma} \exp\big( 2 \pi i P(x', p^\sigma x'')/p^j\big) \neq 0
	\end{equation}
	entails that $q = Q$. To obtain a contradiction, suppose that $q < Q$. Let $\gamma_0 \in \Gamma$, $\abs{\gamma_0} = 1$
	be such that $q_{\gamma_0} = Q$. Thus $q \mid p^{j-\sigma_1}$. For any $r \in \NN_q^k$ we can wright
	\[
		\begin{aligned}
		&
		\sum_{\atop{(x', x'') \in \Omega^\sigma}{x \equiv r \bmod q}} 
		\exp\big(2\pi i P(x', p^\sigma x'')/p^j\big) \\
		&\qquad\qquad=
		\exp\big(2 \pi i \tilde{P}(r', p^\sigma r'')/p^j \big)
		\sum_{\atop{x \in \Omega^\sigma}{x \equiv r \bmod q}}
		\prod_{\atop{\gamma \in \Gamma}{\abs{\gamma} = 1}}
		\exp\big(2\pi i b_\gamma x^\gamma / q_\gamma\big),
		\end{aligned}
	\]
	where
	\[
        \tilde{P}(x) = \sum_{\atop{\gamma \in \Gamma}{\abs{\gamma} \geq 2}} a_\gamma x^\gamma.
    \]
	Thus \eqref{eq:41} implies that $q_{\gamma_0} \mid b_{\gamma_0} q$, which is	
	impossible. Hence, $q = Q$.

	Now, let $\gamma_0 \in \Gamma$, $\abs{\gamma_0} \geq 2$, be such that $q_{\gamma_0} = Q$. Then
	\[
		q_{\gamma_0} = Q = q \leq \prod_{\atop{\gamma \in \Gamma}{\abs{\gamma} \geq 2}} q_\gamma \leq q_{\gamma_0}^d,
	\]
	and thus
	\begin{equation}
		\label{eq:43}
		Q^{1/d} \leq q_{\gamma_0} = Q < Q^{\abs{\gamma_0} - 1/d}.
	\end{equation}
	Suppose that $Q < p^j$. Since $a \in \bfA_q$, we must have $\sigma \neq 0$. Then for
	$j \leq D = \max\{\abs{\gamma''} : \gamma \in \Gamma\}$, by a trivial estimate we have
	\[
		\Big|
		\sum_{(x', x'') \in \Omega^\sigma} \exp\big(2\pi i P(x', p^\sigma x'')/p^j\big)
		\Big|
		\leq
		p^{kj - \abs{\sigma}}
		\leq
		p^{kj(1 - \delta_1)},
	\]
	provided $0 < \delta_1 < (k D)^{-1}$. Since $Q \geq p^{j-D}$, for $j \geq D+1$ we have
	\[
		Q^{1/d} \geq p^{j \epsilon},
	\]
	whenever $0 < \epsilon < (d (D+1))^{-1}$. Hence, by \eqref{eq:43},
	\[
		p^{j\epsilon} \leq q_{\gamma_0} \leq p^{j(\abs{\gamma_0}-\epsilon)}.
	\]
	Obviously, the last estimate is also valid for $Q = p^j$. Since $\Omega^\sigma \subseteq \NN_{p^j}^k$, by
	\cite[Proposition 3]{SW0}, there are $C > 0$ and $\delta_2 > 0$ such that
    \[
        \Big|
        \sum_{(x', x'') \in \Omega^\sigma} 
		\exp\big( 2\pi i P(x', p^\sigma x'')/p^j \big)
        \Big|
        \leq
		C p^{j(k - \delta_2)},
	\]
	which finishes the proof of \eqref{eq:127} for $q=p^j$, and the theorem follows.
\end{proof}

\section{Multipliers}
In this section we develop some estimates on discrete Fourier multipliers corresponding to operators $M_N$ and $H_N$.
\subsection{Averaging operators}
\label{sec:8}
For a function $f: \ZZ^d \rightarrow \CC$ with a finite support we have
\[
	M_N f(x) = \calF^{-1}\big(\frkM_N \hat{f}\big)(x),
\]
where $\frkM_N$ is the discrete Fourier multiplier
\[
	\frkM_N(\xi) = \frac{1}{\vartheta_B(N)}
	\sum_{n \in \NN^{k'}} \sum_{p \in \PP^{k''}} 
	e^{2\pi i \sprod{\xi}{\calQ(n, p)}} 
	\ind{B_N}(n, p)
	\bigg(\prod_{j = 1}^{k''} \log p_j\bigg),
\]
wherein $\vartheta_B$ is the Chebyshev function
\[
	\vartheta_B(N) = \sum_{n \in \NN^{k'}} \sum_{p \in \PP^{k''}} \ind{B_N}(n, p) 
	\bigg(\prod_{j = 1}^{k''} \log p_j\bigg).
\]
By \eqref{eq:151} and the prime number theorem,
\begin{equation}
	\label{eq:149}
	\vartheta_B(N) \simeq N^k.
\end{equation}
Next, let us define
\[
	\Phi_N(\xi) = \frac{1}{|B|} \int_B e^{2\pi i \sprod{\xi}{\calQ(N x)}} {\: \rm d} x,
\]
where $\abs{B}$ denotes Euclidean measure of $B$. By a multi-dimensional version of van der Corput's lemma
(see \cite[Proposition 2.1]{sw}) we have
\[
	\abs{\Phi_N(\xi)} \lesssim \min\left\{1, \abs{N^A \xi}_\infty^{-1/d}\right\},
\]
where $A$ is the matrix defined in \eqref{eq:21}. Moreover,
\begin{equation}
	\label{eq:26}
	\abs{\Phi_N(\xi) - 1} \lesssim \min\left\{1, \abs{N^A \xi}_\infty \right\}.
\end{equation}
Therefore, for $N < N' \leq 2N$, we have
\begin{equation}
	\label{eq:27}
	\abs{\Phi_N(\xi) - \Phi_{N'}(\xi)} \lesssim \min\left\{\abs{N^A \xi}_\infty , \abs{N^A \xi}_\infty^{-1/d} \right\}.
\end{equation}
We start with the following proposition.
\begin{proposition}
	\label{prop:1}
	For each $\beta' > 0$ there are $C, c > 0$ such that for all $N \in \NN$, and $\xi \in \TT^d$ satisfying
	\begin{equation}
		\label{eq:2}
		\bigg|\xi_{\gamma} - \frac{a_{\gamma}}{q}\bigg| \leq N^{-\abs{\gamma}} L,
		\qquad\text{for all } \gamma \in \Gamma,
	\end{equation}
	where $1 \leq q \leq (\log N)^{\beta'}$, $a \in \bfA_q$, and $1 \leq L \leq \exp\big(c \sqrt{\log N}\big)$, we have
	\[
		\Big|\frkM_N(\xi) - G(a/q) \Phi_N(\xi - a/q) \Big| 
		\leq C L\exp\big(-c\sqrt{\log N}\big).
	\]
	The constant $c$ is absolute.
\end{proposition}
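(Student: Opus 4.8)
The plan is to run the Hardy--Littlewood circle method. Write $\xi = a/q + \theta$, where $a = (a_\gamma)_{\gamma \in \Gamma} \in \bfA_q$ and, by \eqref{eq:2}, $\abs{\theta_\gamma} \le N^{-\abs{\gamma}} L$ for every $\gamma \in \Gamma$. Since the phase $e^{2\pi i (a/q)\cdot\calQ(n,p)}$ depends on $(n,p)$ only through its residue class modulo $q$, the first step is to group the defining sum of $\frkM_N$ into residue classes,
\[
	\frkM_N(\xi) = \frac{1}{\vartheta_B(N)} \sum_{x\in\NN_q^{k'}}\sum_{y\in\NN_q^{k''}} e^{2\pi i(a/q)\cdot\calQ(x,y)}\,T_{x,y}(\theta),
	\qquad
	T_{x,y}(\theta) = \sum_{\atop{n\equiv x\ (q)}{p\equiv y\ (q)}} e^{2\pi i\theta\cdot\calQ(n,p)}\,\ind{B_N}(n,p)\prod_{j=1}^{k''}\log p_j ,
\]
and to discard the residues not contributing to the Gaussian sum. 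If some $y_j$ is not coprime to $q$, then $p_j \equiv y_j \pmod q$ forces $p_j \mid q$, so at most $\omega(q)$ primes occur in that coordinate, each $\le q \le (\log N)^{\beta'}$; bounding $\ind{B_N}\le 1$ in the remaining variables, using \eqref{eq:151} to count lattice points and \eqref{eq:149} for the normalization, one finds that the total contribution of residues with some $y_j$ not coprime to $q$ is $\calO\big(N^{-1}(\log N)^{C_{\beta'}}\big)$, which is absorbed into the claimed error. Hence we may restrict the sum to $y \in A_q^{k''}$.

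The core of the argument is the evaluation of $T_{x,y}(\theta)$ for $x \in \NN_q^{k'}$ and $y \in A_q^{k''}$. Here I would treat $(n,p) \mapsto \ind{B_N}(n,p)\,e^{2\pi i\theta\cdot\calQ(n,p)}$ as a slowly varying weight: convexity of $B$ makes $\ind{B_N}$ of bounded variation in each coordinate, while $\abs{\theta_\gamma}N^{\abs{\gamma}} \le L$ gives the oscillatory factor total variation $\calO(L)$ in each direction. Peeling off the $k''$ prime variables one at a time and applying partial summation against the Siegel--Walfisz estimate $\sum_{\atop{p\le M}{p\equiv y_j\,(q)}}\log p = M/\varphi(q) + \calO\big(M\exp(-c\sqrt{\log M})\big)$, valid in the range $q \le (\log M)^{\beta'}$, and then replacing each residue-class sum over an integer variable $n_i \equiv x_i \pmod q$ by the corresponding Lebesgue integral, one obtains after $k = k' + k''$ such steps
\[
	T_{x,y}(\theta) = \frac{\vartheta_B(N)}{q^{k'}\varphi(q)^{k''}}\,\Phi_N(\theta) + \calO\big(L\,N^k\exp(-c\sqrt{\log N})\big) ,
\]
the main term being produced by the prime number theorem (cf.\ \eqref{eq:149}) together with the definition of $\Phi_N$. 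The decisive point is that this main term is the \emph{same} for every coprime residue $(x,y)$; only the density factor $\big(q^{k'}\varphi(q)^{k''}\big)^{-1}$ depends on it. The Riemann-sum errors coming from the integer variables are $\calO\big(Lq\,N^{k-1}\big)$, which is dominated by the Siegel--Walfisz error because $q \le (\log N)^{\beta'}$ and $L \le \exp(c\sqrt{\log N})$.

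It remains to reassemble. Summing $e^{2\pi i(a/q)\cdot\calQ(x,y)}$ over $x \in \NN_q^{k'}$ and $y \in A_q^{k''}$ gives exactly $q^{k'}\varphi(q)^{k''}\,G(a/q)$ by the definition of the Gaussian sum, so the main terms combine to $\frac{1}{\vartheta_B(N)}\cdot q^{k'}\varphi(q)^{k''}G(a/q)\cdot\frac{\vartheta_B(N)}{q^{k'}\varphi(q)^{k''}}\Phi_N(\theta) = G(a/q)\,\Phi_N(\xi - a/q)$. For the errors, there are at most $q^{k'}\varphi(q)^{k''} \le (\log N)^{k\beta'}$ residue classes and $\abs{G(a/q)} \le 1$ (indeed $\le C q^{-\delta}$ by Theorem~\ref{thm:2}), so the accumulated error is $\calO\big((\log N)^{k\beta'}\,L\exp(-c\sqrt{\log N})\big) = \calO\big(L\exp(-c'\sqrt{\log N})\big)$ for a slightly smaller constant $c'$; relabelling gives the statement. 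The main obstacle is the second step: one must carry the Siegel--Walfisz error through a $k$-fold Abel rearrangement while keeping it uniform over the residue $(x,y)$ and over the total variation $L$ of the oscillatory factor, which may be as large as $\exp(c\sqrt{\log N})$. This is precisely why the hypothesis caps $L$ by $\exp(c\sqrt{\log N})$ with $c$ small relative to the Siegel--Walfisz decay rate, and why $q$ is confined to $q \le (\log N)^{\beta'}$, the range in which the Siegel--Walfisz theorem is effective.
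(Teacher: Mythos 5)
Your proposal is correct and follows essentially the same route as the paper's proof: decomposition into residue classes modulo $q$ with the non-coprime prime residues discarded at cost $\calO(N^{k-1}\log q)$, Abel summation in each prime variable against the Siegel--Walfisz theorem, replacement of the residue-class sums by integrals with Riemann-sum errors $\calO(q N^{k-1} L)$ controlled by $\abs{\theta_\gamma} N^{\abs{\gamma}} \leq L$ and the lattice-point count near $\partial B_N$, and reassembly of the phases into $G(a/q)$, the polylogarithmic number of residue classes being absorbed by shrinking $c$. The only technical point the paper makes explicit that you elide is the truncation of each prime variable at height $N^{1/2}$ before partial summation, so that the Siegel--Walfisz estimate applies (and is nontrivial relative to $N$) at every intermediate height, the low range being handled by a trivial bound.
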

\begin{proof}
	Observe that for a prime number $p$, $p \mid q$ if and only if $(p \bmod q, q) > 1$. Hence, for each 
	$s \in \{1, \ldots, k''\}$, we have
	\[
		\bigg|
		\sum_{u \in \NN^{k'}}
		\sum_{\atop{r'' \in \NN_q^{k''}}{(r_s'', q) > 1}} 
		\sum_{\atop{p \in \PP^{k''}}{p \equiv r'' \bmod q}}
		e^{2\pi i \sprod{\xi}{\calQ(u, p)}}
		\ind{B_N}(u, p)
		\bigg(\prod_{j=1}^{k''} \log p_j\bigg)
		\bigg|
		\lesssim
		N^{k-1}
		\sum_{p \mid q} \log p
		\lesssim
		N^{k-1} \log q.
	\]
	Let $\theta = \xi - a/q$. Then by \eqref{eq:2},
	\begin{equation}
		\label{eq:5}
		\abs{\theta_\gamma} \leq N^{-\abs{\gamma}} L, \qquad\text{for all } \gamma \in \Gamma.
	\end{equation} 
	Since for $(u, p) \in \NN^{k'} \times \PP^{k''}$ such that $u \equiv r' \bmod q$, and 
	$p \equiv r'' \bmod q$,
	\[
		\begin{aligned}
		\xi_\gamma u^{\gamma'} p^{\gamma''} &\equiv \frac{a_\gamma}{q} u^{\gamma'} p^{\gamma''} 
		+ \theta_\gamma n^{\gamma'} p^{\gamma''} \pmod 1\\
		&\equiv \frac{a_\gamma}{q} (r')^{\gamma'} (r'')^{\gamma''} + \theta_\gamma u^{\gamma'} p^{\gamma''} \pmod 1,
		\end{aligned}
	\]
	we have
	\begin{equation}
		\label{eq:6}
		\begin{aligned}
		&
		\sum_{u \in \NN^{k'}} \sum_{p \in \PP^{k''}} e^{2\pi i \sprod{\xi}{\calQ(u, p)}} \ind{B_N}(u, p)
		\bigg(\prod_{j = 1}^{k''} \log p_j\bigg)
		\\
		&\qquad\qquad=
		\sum_{r' \in \NN^{k'}_q} 
		\sum_{r'' \in A_q^{k''}}
		e^{2\pi i \sprod{(a/q)}{\calQ(r', r'')}} 
		\sum_{\atop{u \in \NN^{k'}}{u \equiv r' \bmod q}}
		\sum_{\atop{p \in \PP^{k''}}{p \equiv r'' \bmod q}}
		e^{2\pi i \sprod{\theta}{\calQ(u, p)}}
		\ind{B_N}(u, p) 
		\bigg(\prod_{j = 1}^{k''} \log p_j\bigg)
		\\ 
		&\qquad\qquad\phantom{=}+
		\calO\Big(N^{k-1} \log \log N\Big).
		\end{aligned}
	\end{equation}
	Let us fix $u \in \NN^{k'}$, $\tilde{p} \in \PP^{k''-1}$ and $r_1'' \in A_q$. Then
	\[
		\left\{v \in \NN : (u, v, \tilde{p}) \in B_N\right\} = \left(V_0+1, \ldots, V_1\right),
	\]
	for some $0 \leq V_0 \leq V_1 \leq N$. Let $\tilde{V}_0 = \max\big\{N^{1/2}, V_0\big\}$ and
	$\tilde{V}_1 = \max\big\{N^{1/2}, V_1\big\}$. We have
	\[
		\sum_{\atop{p_1 \in \PP_{V_1} \setminus \PP_{V_0}}{p_1 \equiv r_1'' \bmod q}}
        e^{2\pi i \sprod{\theta}{\calQ(u, p_1, \tilde{p})}} \log p_1
		=
		\sum_{\atop{p_1 \in \PP_{\tilde{V}_1} \setminus \PP_{\tilde{V}_0}}{p_1 \equiv r_1'' \bmod q}}
        e^{2\pi i \sprod{\theta}{\calQ(u, p_1, \tilde{p})}} \log p_1
		+
		\calO\big(N^{1/2}\big).
	\]
	By the partial summation we obtain
	\begin{equation}
		\label{eq:3}
		\begin{aligned}
		\sum_{\atop{p_1 \in \PP_{\tilde{V}_1} \setminus \PP_{\tilde{V}_0}}{p_1 \equiv r_1'' \bmod q}}
		e^{2\pi i \sprod{\theta}{\calQ(n, p_1, \tilde{p})}} \log p_1 
		&=
		\sum_{\atop{\tilde{V}_0 < v_1 \leq \tilde{V}_1}{v_1 \equiv r_1'' \bmod q}}
		e^{2\pi i \sprod{\theta}{\calQ(u, v_1, \tilde{p})}} 
		\ind{\PP}(v_1) \log v_1 \\
		&=
		\vartheta(\tilde{V}_1; q, r''_1) e^{2\pi i \sprod{\theta}{\calQ(u, \tilde{V}_1, \tilde{p})}}
		-
		\vartheta(\tilde{V}_0; q, r''_1) e^{2\pi i \sprod{\theta}{\calQ(u, \tilde{V}_0, \tilde{p})}} \\
		&\phantom{=} 
		- \int_{\tilde{V}_0}^{\tilde{V}_1} \vartheta(t; q, r) 
		\frac{{\rm d}}{{\rm d} t} \Big(e^{2\pi i \sprod{\theta}{\calQ(u, t, \tilde{p})}} \Big) {\: \rm d} t,
		\end{aligned}
	\end{equation}
	where for $x \geq 2$, we have set
	\[
		\vartheta(x; q, r) = \sum_{\atop{p \in \PP_x}{p \equiv r \bmod q}} \log p.
	\]
	Analogously, we can write
	\begin{equation}
		\label{eq:4}
		\begin{aligned}
		\sum_{V_0 < v_1 \leq V_1} 
		e^{2\pi i \sprod{\theta}{\calQ(u, v_1, \tilde{p})}} 
		&=
		\tilde{V}_1 e^{2\pi i \sprod{\theta}{\calQ(u, \tilde{V}_1, \tilde{p})}} 
		- \tilde{V}_0 e^{2\pi i \sprod{\theta}{\calQ(u, \tilde{V}_0, \tilde{p}})} \\
		&\phantom{=}
		-\int_{\tilde{V}_0}^{\tilde{V}_1} 
		t \frac{{\rm d}}{{\rm d} t} \Big(e^{2\pi i \sprod{\theta}{\calQ(u, t, \tilde{p})}}\Big) {\: \rm d} t
		+ \calO\big(N^{1/2} \big).
		\end{aligned}
	\end{equation}
	Furthermore, in view of the Siegel--Walfisz theorem (\cite{sieg, wal}, see also \cite[Corollary 11.21]{mv}), 
	there are $C, c > 0$ such that for all $x \geq 2$, $(r, q) = 1$ and $1 \leq q \leq (\log x)^{2 \beta'}$,
	\begin{equation}
		\label{eq:1}
		\bigg|\vartheta(x; q, r) - \frac{x}{\varphi(q)} \bigg| \leq C x \exp\big(-c \sqrt{\log x}\big).
	\end{equation}
	Hence, by \eqref{eq:3}, \eqref{eq:4} and \eqref{eq:5}, we obtain
	\begin{align*}
		&\bigg| \sum_{\atop{p_1 \in \PP_{V_1}\setminus\PP_{V_0}}{p_1 \equiv r_1'' \bmod q}}
        e^{2\pi i \sprod{\theta}{\calQ(u, p_1, \tilde{p})}} \log p_1 -
		\frac{1}{\varphi(q)}
		\sum_{V_0 < v_1 \leq V_1} e^{2\pi i \sprod{\theta}{\calQ(u, v_1, \tilde{p})}}
		\bigg| \\
		&\qquad\qquad\lesssim
		N^{1/2}
		+
		\bigg|\vartheta(\tilde{V}_1; q, r''_1) - \frac{\tilde{V}_1}{\varphi(q)}\bigg|
		+
		\bigg|\vartheta(\tilde{V}_0; q, r''_1) - \frac{\tilde{V}_0}{\varphi(q)}\bigg| \\
		&\qquad\qquad\phantom{\lesssim}
		+
		\Big( \sum_{\gamma \in \Gamma} \abs{\theta_\gamma} N^{\abs{\gamma}-1}\Big)
		\int_{\tilde{V}_0}^{\tilde{V}_1}
		\bigg|\vartheta(t; q, r''_1) - \frac{t}{\varphi(q)} \bigg| {\: \rm d} t \\
		&\qquad\qquad\lesssim
		N \exp\big(- c \sqrt{\log N}\big)
		+
		L N^{-1}
		\int_{N^{1/2}}^{N} t \exp\big(-c \sqrt{\log t}\big){\: \rm d}t.
	\end{align*}
	Thus,
	\[
		\sum_{\atop{p_1 \in \PP_{V_1}\setminus \PP_{V_0}}{p_1 \equiv r_1'' \bmod q}}
        e^{2\pi i \sprod{\theta}{\calQ(u, p_1, \tilde{p})}} \log p_1
		=
		\frac{1}{\varphi(q)}
        \sum_{V_0 < v_1 \leq V_1}e^{2\pi i \sprod{\theta}{\calQ(u, v_1, \tilde{p})}}
		+
		\calO\Big(N L \exp\big(-c\sqrt{\log N}\big)\Big).
	\]
	In view of \eqref{eq:149}, similar arguments applied to the sums over $p_2, \ldots, p_{k''}$ lead to 
	\begin{align*}
		&\sum_{\atop{u \in \NN^{k'}}{u \equiv r' \bmod q}}
		\sum_{\atop{p \in \PP^{k''}}{p \equiv r'' \bmod q}}
		e^{2\pi i \sprod{\theta}{\calQ(u, p)}}
		\ind{B_N}(u, p)
		\bigg(\prod_{j = 1}^{k''} \log p_j \bigg) \\
		&\qquad\qquad=
		\frac{1}{\varphi(q)^{k''}}
		\sum_{u \in \NN_0^{k'}}
		\sum_{v \in \NN^{k''}}
		e^{2\pi i \sprod{\theta}{\calQ(q u+r', v)}} \ind{B_N}(qu+r', v) 
		+
		\calO\Big(N^k L \exp\big(-c\sqrt{\log N}\big)\Big).
	\end{align*}
	By \cite[Proposition 3.1]{mst1}, the number of lattice points in $B_N$ at the distance $<q$
	from the boundary of $B_N$ is $\calO(q N^{k-1})$. Moreover, for each $(x, y) \in [0, 1]^k$, and
	$(q u + q x, v + y) \in B_N$, we have
   	\begin{align*}
        \big|
        \sprod{\theta}{\calQ(q u + q x , v + y )} - \sprod{\theta}{\calQ(q u, v)}
        \big|
        \leq
        C q \sum_{\gamma \in \Gamma} \abs{\theta_\gamma} N^{\abs{\gamma}-1}
        \lesssim
		q N^{-1} L.
    \end{align*}
	Hence, by \eqref{eq:6} and \eqref{eq:149},
	\begin{align*}
		&\sum_{\atop{u \in \NN^{k'}}{u \equiv r' \bmod q}} 
		\sum_{\atop{p \in \PP^{k''}}{p \equiv r'' \bmod q}}
        e^{2\pi i \sprod{\theta}{\calQ(u, p)}} 
		\ind{B_N}(u, p)
		\bigg(\prod_{j = 1}^{k''} \log p_j \bigg) \\
		&\qquad\qquad=
		\frac{1}{\varphi(q)^{k''}}
        \sum_{u \in \NN_0^{k'}}
        \sum_{v \in \NN^{k''}}
        e^{2\pi i \sprod{\theta}{\calQ(q u, v)}} \ind{B_N}(qu+r', v)
		+
        \calO\Big(N^k L \exp\big(-c\sqrt{\log N}\big)\Big) \\
		&\qquad\qquad=
		\frac{1}{\varphi(q)^{k''}}
        \sum_{u \in \NN_0^{k'}}
        \sum_{v \in \NN^{k''}}
        e^{2\pi i \sprod{\theta}{\calQ(q u, v)}} \ind{B_N}(qu, v)
		+
        \calO\Big(N^{k} L \exp\big(-c\sqrt{\log N}\big)\Big).
	\end{align*}
	Finally, another application of the mean value theorem allows us to
	replace the sums by the corresponding integrals. Indeed, we have
	\begin{align*}
		&
		\left|
		\sum_{u \in \NN_0^{k'}}
        \sum_{v \in \NN^{k''}}
        e^{2\pi i \sprod{\theta}{\calQ(q u, v)}} \ind{B_N}(qu, v)
		-
		\iint_{\RR_+^k} e^{2 \pi i \sprod{\theta}{\calQ(qx, y)}} 
		\ind{B_N}(q x, y) {\: \rm d} x {\: \rm d}y \right|\\
		&\qquad\qquad=
		\left|
		\sum_{u \in \NN_0^{k''}} 
		\sum_{v \in \NN^{k'}}
		\int_{u + (0, 1]^{k'}} \int_{v+(0, 1]^{k''}}
		e^{2\pi i \sprod{\theta}{\calQ(q u, v)}} \ind{B_N}(qu, v)
		-
		e^{2\pi i \sprod{\theta}{\calQ(q x, y)}} \ind{B_N}(qx, y)
		{\: \rm d}y {\: \rm d}x
		\right| \\
		&\qquad\qquad\lesssim
		\sum_{u \in \NN_0^{k'}} \sum_{v \in \NN^{k''}}
		\iint_{(0, 1]^k}
		\left|
		e^{2\pi i \sprod{\theta}{\calQ(q u, v)}} - e^{2\pi i \sprod{\theta}{\calQ(q u+qx, v + y)}}
		\right|
		\ind{B_N}(qu, v)
		{\: \rm d}x{\: \rm d}y \\
		&\qquad\qquad\phantom{\lesssim}+
		\sum_{u \in \NN_0^{k'}} 
		\sum_{v \in \NN^{k''}}
		\int_{(0, 1]^k} \Big|\ind{B_N}(qu, v) - \ind{B_N}(q(u + x), v + y)\Big| {\: \rm d}x {\:\rm d}y,
	\end{align*}
	which is again bounded by $q N^{k-1} L$. Therefore,
	\[
		\Big|
		\vartheta_B(N) \frkM_N(\xi) - G(a/q) \abs{B} N^k \Phi_N(\xi - a/q)
		\Big|
		\leq
		C N^k L \exp\big(- c \sqrt{\log N}\big).
	\]
	In particular, taking $\xi = 0$, $a = 0$ and $L = 1$, we obtain
	\begin{equation}
		\label{eq:31}
		\vartheta_B(N) = \abs{B} N^k \Big(1 + \calO\Big(\exp\big(-c\sqrt{\log N}\big)\Big)\Big).
	\end{equation}
	This completes the proof.
\end{proof}

\begin{lemma}
	\label{lem:3}
	For each $\alpha > 0$ there is $C > 0$ such that for all $N \in \NN$, and $\xi \in \TT^d$ satisfying
	\begin{equation}
        \label{eq:173}
        \bigg|\xi_\gamma - \frac{a_\gamma}{q} \bigg| 
		\leq N^{-\abs{\gamma}} L,
        \qquad\text{for all } \gamma \in \Gamma,
    \end{equation}
	where $1 \leq q \leq L$, $a \in \bfA_q$, and $1 \leq L \leq \exp\big(c \sqrt{\log N}\big) (\log N)^{-\alpha}$, 
	we have
    \[
		\Big|
        \frkM_N(\xi) 
		- G(a/q) \Phi_N(\xi - a/q)
		\Big|
		\leq
		C 
		(\log N)^{-\alpha}.
    \]
\end{lemma}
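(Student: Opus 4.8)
The plan is to split according to the size of the denominator $q$, with a threshold at $(\log N)^{\beta_1}$ for an exponent $\beta_1 = \beta_1(\alpha)$ chosen large enough to serve two purposes at once: $\delta \beta_1 \ge \alpha$, where $\delta > 0$ is the constant in Theorem \ref{thm:2}, and $\beta_1 > d \beta_\alpha$, where $\beta_\alpha$ is the exponent supplied by Theorem \ref{thm:9} for this $\alpha$. Throughout, $c$ denotes the absolute constant of Proposition \ref{prop:1} (the same $c$ appearing in the hypothesis of the lemma). We may assume $N$ is large, since on any finite range of $N$ both $\abs{\frkM_N(\xi)}$ and $\abs{G(a/q) \Phi_N(\xi - a/q)}$ are $\le 1$ while $(\log N)^{-\alpha}$ is bounded below, so the claim is trivial after enlarging $C$.

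If $q \le (\log N)^{\beta_1}$, then hypothesis \eqref{eq:173} is exactly \eqref{eq:2}, and the bound $L \le \exp(c\sqrt{\log N})(\log N)^{-\alpha}$ in particular forces $L \le \exp(c\sqrt{\log N})$, so Proposition \ref{prop:1} applies with $\beta' = \beta_1$ and yields $\abs{\frkM_N(\xi) - G(a/q)\Phi_N(\xi - a/q)} \le C L \exp(-c\sqrt{\log N}) \le C (\log N)^{-\alpha}$, the last inequality being precisely the assumed upper bound on $L$. This is the easy regime.

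If $q > (\log N)^{\beta_1}$, I estimate the two terms separately. Since $\abs{\Phi_N} \le 1$, Theorem \ref{thm:2} gives $\abs{G(a/q)\Phi_N(\xi - a/q)} \le C q^{-\delta} \le C (\log N)^{-\delta\beta_1} \le C(\log N)^{-\alpha}$. For $\frkM_N(\xi)$ I want to invoke Theorem \ref{thm:9}, so I must exhibit a multi-index along which $\xi$ has a large-denominator rational approximant. Writing $q_\gamma = q/\gcd(a_\gamma, q)$ for the reduced denominator of $a_\gamma/q$, the assumption $a \in \bfA_q$ means that no prime divisor of $q$ divides every $a_\gamma$, whence $\lcm\{q_\gamma : \gamma \in \Gamma\} = q$ and hence $\max_{\gamma \in \Gamma} q_\gamma \ge q^{1/d}$. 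Fix $\gamma_0$ attaining the maximum and write $a_{\gamma_0}/q = a'/q_{\gamma_0}$ in lowest terms, so $1 \le a' \le q_{\gamma_0}$ and $q_{\gamma_0} > 1$. By \eqref{eq:173} we have $\abs{\xi_{\gamma_0} - a'/q_{\gamma_0}} \le N^{-\abs{\gamma_0}} L$; since $q_{\gamma_0} \le q \le L \le \exp(c\sqrt{\log N})(\log N)^{-\alpha}$, which grows more slowly than any positive power of $N$, one checks that for $N$ large this is $\le q_{\gamma_0}^{-2}$ (because $q_{\gamma_0}^2 L \le L^3 = N^{o(1)}$), while simultaneously $(\log N)^{\beta_\alpha} < (\log N)^{\beta_1/d} < q_{\gamma_0} \le N^{\abs{\gamma_0}}(\log N)^{-\beta_\alpha}$. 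Thus Theorem \ref{thm:9}, applied with $\phi \equiv 1$ and $\Omega = B_N$ — which is convex and contained in $[-N, N]^k$ by \eqref{eq:151} — bounds the numerator of $\frkM_N(\xi)$ by $C N^k (\log N)^{-\alpha}$, and dividing by $\vartheta_B(N) \simeq N^k$ (see \eqref{eq:149}) gives $\abs{\frkM_N(\xi)} \le C(\log N)^{-\alpha}$. Adding the two bounds completes this case.

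The only genuine work sits in the large-$q$ case: one must choose the single exponent $\beta_1$ so that it forces the Gaussian factor below $(\log N)^{-\alpha}$ \emph{and} keeps the extracted denominator $q_{\gamma_0} \ge q^{1/d}$ inside the admissible window $[(\log N)^{\beta_\alpha}, N^{\abs{\gamma_0}}(\log N)^{-\beta_\alpha}]$ of Theorem \ref{thm:9}, and one must verify the Dirichlet inequality $\abs{\xi_{\gamma_0} - a'/q_{\gamma_0}} \le q_{\gamma_0}^{-2}$ — the one place where the hypothesis $q \le L$, absent from Proposition \ref{prop:1}, is actually used. Everything else is a bookkeeping assembly of Proposition \ref{prop:1}, Theorem \ref{thm:2}, and Theorem \ref{thm:9}.
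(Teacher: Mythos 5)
Your proof is correct, and its skeleton matches the paper's: split according to whether $q \leq (\log N)^{\beta_1}$, use Proposition \ref{prop:1} together with the hypothesis $L \leq \exp\big(c\sqrt{\log N}\big)(\log N)^{-\alpha}$ in the small-denominator regime, and in the large-denominator regime bound $\frkM_N$ via Theorem \ref{thm:9} and the Gaussian factor via Theorem \ref{thm:2}. Where you genuinely diverge is in how the hypotheses of Theorem \ref{thm:9} are verified when $q > (\log N)^{\beta_1}$. The paper runs a fresh Dirichlet approximation of each coordinate $\xi_\gamma$ at scale $N^{\abs{\gamma}} L^{-1} (\log N)^{-\beta'/d}$ and then splits again: either some new denominator $q_\gamma'$ lands in the admissible window (so Theorem \ref{thm:9} applies), or all $q_\gamma'$ are small, in which case recombining them through an lcm produces a second reduced point $a''/q''$ with $q'' < q$ lying too close to $a/q$, which is impossible for large $N$. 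You instead extract the approximant directly from the given fraction: reducing each $a_\gamma/q$ and using $a \in \bfA_q$ to get $\lcm\{q_\gamma : \gamma \in \Gamma\} = q$ forces some reduced denominator $q_{\gamma_0} \geq q^{1/d} > (\log N)^{\beta_1/d}$, while $q_{\gamma_0} \leq q \leq L = N^{o(1)}$ supplies both the upper end of the window and the Dirichlet-quality bound $N^{-\abs{\gamma_0}} L \leq q_{\gamma_0}^{-2}$ --- precisely the place where the extra hypothesis $q \leq L$ (absent from Proposition \ref{prop:1}) is used, as you observe. Your route is slightly shorter, avoids the vacuous subcase, and is more explicit on one small point: in the large-$q$ case the paper only records $\abs{\frkM_N(\xi)} \lesssim (\log N)^{-\alpha}$ and leaves the bound on $G(a/q)\Phi_N(\xi - a/q)$ implicit, whereas your additional requirement $\delta \beta_1 \geq \alpha$ handles it explicitly; the modest price is the arithmetic observation about reduced denominators, which the paper's recombination argument does not need.
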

\begin{proof}
	Given $\alpha > 0$, let $\beta' \geq d \beta_\alpha$, where $\beta_\alpha$ is the value determined in Theorem
	\ref{thm:9}. 

	Suppose that \eqref{eq:173} holds for some $(\log N)^{\beta'} < q \leq L$ and $a \in \bfA_q$. For each
	$\gamma \in \Gamma$, by Dirichlet's principle there are coprime integers $a'_\gamma$ and $q'_\gamma$ such that
	$1 \leq a_\gamma' \leq q_\gamma' \leq N^{\abs{\gamma}} L^{-1} (\log N)^{-\beta'/d}$,
	and satisfying
	\[
		\bigg|\xi_\gamma - \frac{a_\gamma'}{q_\gamma'}\bigg| 
		\leq \frac{1}{q_\gamma'} N^{-\abs{\gamma}} L (\log N)^{\beta'/d}.
	\]
	Assume that for some $\gamma \in \Gamma$, $(\log N)^{\beta'/d} \leq 
	q_\gamma' \leq N^{\abs{\gamma}} L^{-1} (\log N)^{-\beta'/d}$. Then, by Theorem \ref{thm:9}, we have
	\[
		\big|\frkM_N(\xi)\big| \leq C (\log N)^{-\alpha}.
	\]
	If for all $\gamma \in \Gamma$, $1 \leq q_\gamma' \leq (\log N)^{\beta'/d}$, then we set
	$q'' = \lcm\big(q_\gamma' : \gamma \in \Gamma\big)$ and $a''_\gamma = a_\gamma' q''/q_\gamma'$ getting
	$1 \leq q'' \leq (\log N)^{\beta'}$ and $a'' \in \bfA_{q''}$ with
	\[
		\bigg|\xi_\gamma - \frac{a_\gamma'}{q_\gamma'}\bigg|
		=
		\bigg|\xi_\gamma -\frac{a_\gamma''}{q''} \bigg| 
		\leq N^{-\abs{\gamma}} L (\log N)^{\beta'/d}.
	\]
	Since $a'/q' \neq a/q$,
	\begin{align*}
		(\log N)^{-\beta'} L^{-1}
		\leq 
		\frac{1}{q'' q} \leq \bigg|\frac{a_\gamma''}{q''} - \frac{a_\gamma}{q}\bigg|
		&\leq
		\bigg|\xi_\gamma -\frac{a_\gamma''}{q''} \bigg|
		+
		\bigg|\xi_\gamma -\frac{a_\gamma}{q} \bigg| \\
		&\leq
		N^{-\abs{\gamma}} L \big(1 + (\log N)^{\beta'/d}\big),
	\end{align*}
	which is possible only for finite number of $N$'s.

	Finally, in the case when $1 \leq q \leq (\log N)^{\beta'}$, by Proposition \ref{prop:1}, we obtain
	\[
		\frkM_N(\xi)= G(a/q) \Phi_N(\xi - a/q) 
		+ \calO\big((\log N)^{-\alpha}\big),
	\]
	which concludes the proof.
\end{proof}

\begin{lemma}
	\label{lem:2}
	For all $p \in [1, \infty)$, $N_1, N_2 \in \NN$, $N_1 < N_2$, and any $f \in \ell^p\big(\ZZ^d\big)$,
	\[
		\Big\|
		\sum_{n = N_1}^{N_2-1}
		\big|
		M_{n+1} f - M_n f
		\big|
		\Big\|_{\ell^p}
		\leq
		C_p
		N_1^{-k} \big(\vartheta_B(N_2) - \vartheta_B(N_1)\big) \|f\|_{\ell^p}.
	\]
\end{lemma}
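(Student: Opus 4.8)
The plan is to use that $M_N$ is a normalized convolution with a non-negative kernel and to split the consecutive difference into a \emph{kernel} part and a \emph{normalization} part. Set $K_N = \vartheta_B(N) M_N$, so that for a finitely supported $f \colon \ZZ^d \to \CC$,
\[
	K_N f(x) = \sum_{u \in \NN^{k'}} \sum_{p \in \PP^{k''}} f\big(x - \calQ(u, p)\big) \ind{B_N}(u, p) \bigg(\prod_{j = 1}^{k''} \log p_j\bigg).
\]
Since $0 \in B$ and $B$ is convex, $B_n \subseteq B_{n+1}$, hence $\ind{B_{n+1}} - \ind{B_n} = \ind{B_{n+1} \setminus B_n} \geq 0$. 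As $\ell^p(\ZZ^d)$ is translation invariant, the triangle inequality (valid for $p \geq 1$) gives
\[
	\big\|K_{n+1} f - K_n f\big\|_{\ell^p} \leq \bigg(\sum_{u \in \NN^{k'}} \sum_{p \in \PP^{k''}} \big(\ind{B_{n+1}} - \ind{B_n}\big)(u, p) \prod_{j = 1}^{k''} \log p_j \bigg) \|f\|_{\ell^p} = \big(\vartheta_B(n+1) - \vartheta_B(n)\big) \|f\|_{\ell^p},
\]
and likewise $\|K_n f\|_{\ell^p} \leq \vartheta_B(n) \|f\|_{\ell^p}$.

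Next I would decompose
\[
	M_{n+1} f - M_n f = \frac{1}{\vartheta_B(n+1)} \big(K_{n+1} f - K_n f\big) + \bigg(\frac{1}{\vartheta_B(n+1)} - \frac{1}{\vartheta_B(n)}\bigg) K_n f.
\]
Since $\vartheta_B$ is non-decreasing,
\[
	0 \leq \frac{1}{\vartheta_B(n)} - \frac{1}{\vartheta_B(n+1)} = \frac{\vartheta_B(n+1) - \vartheta_B(n)}{\vartheta_B(n)\vartheta_B(n+1)} \leq \frac{\vartheta_B(n+1) - \vartheta_B(n)}{\vartheta_B(n)^2},
\]
so combining this with the two bounds above,
\[
	\big\|M_{n+1} f - M_n f\big\|_{\ell^p} \leq \bigg(\frac{1}{\vartheta_B(n+1)} + \frac{1}{\vartheta_B(n)}\bigg) \big(\vartheta_B(n+1) - \vartheta_B(n)\big) \|f\|_{\ell^p} \leq \frac{2}{\vartheta_B(n)} \big(\vartheta_B(n+1) - \vartheta_B(n)\big) \|f\|_{\ell^p}.
\]

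Finally, summing over $N_1 \leq n \leq N_2 - 1$, using the triangle inequality again, then the monotonicity bound $\vartheta_B(n)^{-1} \leq \vartheta_B(N_1)^{-1}$ for $n \geq N_1$, and telescoping,
\[
	\Big\|\sum_{n = N_1}^{N_2 - 1} \big|M_{n+1} f - M_n f\big|\Big\|_{\ell^p} \leq \frac{2}{\vartheta_B(N_1)} \sum_{n = N_1}^{N_2 - 1} \big(\vartheta_B(n+1) - \vartheta_B(n)\big) \|f\|_{\ell^p} = \frac{2}{\vartheta_B(N_1)} \big(\vartheta_B(N_2) - \vartheta_B(N_1)\big) \|f\|_{\ell^p},
\]
and \eqref{eq:149} gives $\vartheta_B(N_1) \gtrsim N_1^k$, which is the assertion. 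The argument is entirely elementary — the triangle inequality, monotonicity of $\vartheta_B$, and a telescoping sum — so I do not anticipate a genuine obstacle; the only point requiring a moment's care is isolating the normalization term when writing $M_{n+1} f - M_n f$ so that both resulting pieces are controlled by the single quantity $\vartheta_B(n+1) - \vartheta_B(n)$.
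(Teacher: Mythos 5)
Your proof is correct and rests on the same elementary ingredients as the paper's: non-negativity and nestedness of the kernels (so $B_n \subseteq B_{n+1}$), monotonicity of $\vartheta_B$, telescoping of $\vartheta_B(n+1)-\vartheta_B(n)$, and the lower bound $\vartheta_B(N_1) \gtrsim N_1^k$ from \eqref{eq:149}. The only difference is bookkeeping: the paper sums the kernel differences pointwise first, obtaining an $\ell^1$ bound for $\sum_n |m_{n+1}-m_n|$ and applying Young's inequality once, whereas you split each difference into the new-mass and renormalization parts, bound $\|M_{n+1}f - M_n f\|_{\ell^p}$ term by term, and sum with Minkowski's inequality; both routes yield the same estimate.
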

\begin{proof}
	Let us denote by $m_n$ the convolution kernel corresponding to $M_n$. Consider $(x, y) \in \NN^{k'} \times
	\PP^{k''}$. If $(x, y) \in B_{N_1}$ then
	\[
		\sum_{n = N_1}^{N_2-1} \big| m_{n+1}(x, y) - m_n(x, y) \big|
		=
		\bigg(\frac{1}{\vartheta_B(N_1)} - \frac{1}{\vartheta_B(N_2)}\bigg) \prod_{j = 1}^{k''} \log y_j. 
	\]
	If $(x, y) \in B_{N_2} \setminus B_{N_1}$ then by setting
	\[
		n_0 = \min \big\{n \in \NN : x \in B_n \big\},
	\]
	we have 
	\begin{align*}
		\sum_{n = N_1}^{N_2-1} \big| m_{n+1}(x, y) - m_n(x, y) \big|
		&=
		\bigg(\frac{1}{\vartheta_B(n_0)} 
		+ \sum_{n = n_0}^{N_2-1} \frac{1}{\vartheta_B(n)} - \frac{1}{\vartheta_B(n+1)}
		\bigg) \prod_{j = 1}^{k''} \log y_j \\
		&=
		\bigg(\frac{2}{\vartheta_B(n_0)} - \frac{1}{\vartheta_B(N_2)}
        \bigg) \prod_{j = 1}^{k''} \log y_j.
	\end{align*}
	Therefore,
	\[
		\Big\|
		\sum_{n = N_1}^{N_2-1} \big| m_{n+1} - m_n \big|
		\Big\|_{\ell^1}
		\leq
		\bigg(\frac{1}{\vartheta_B(N_1)} - \frac{1}{\vartheta_B(N_2)}\bigg) \vartheta_B(N_1)
		+ \frac{2}{\vartheta_B(N_1)}\big(\vartheta_B(N_2) - \vartheta_B(N_1) \big),
	\]
	and hence, by Young's inequality,
	\begin{align*}
		\Big\|
		\sum_{n = N_1}^{N_2-1} \big| M_{n+1} f - M_n f \big|
		\Big\|_{\ell^p}
		&\leq
		\Big\|
        \sum_{n = N_1}^{N_2-1} \big| m_{n+1} - m_n \big|
        \Big\|_{\ell^1}
		\|f\|_{\ell^p}\\
		&\lesssim
		\frac{\vartheta_B(N_2) - \vartheta_B(N_1)}{\vartheta_N(N_1)} \|f\|_{\ell^p}. 
	\end{align*}
	which finishes the proof since $\vartheta_B(N_1) \simeq N_1^k$. 
\end{proof}

\subsection{Truncated discrete singular operators}
\label{sec:9}
In this section we investigate the asymptotic of Fourier multipliers corresponding to the truncated 
discrete singular operators $H_N$ with a kernel $K$ satisfying \eqref{eq:23} and \eqref{eq:24}. Let $\frkH_N$ be the 
Fourier multiplier corresponding to $H_N$, that is for a finitely supported function $f: \ZZ^d \rightarrow \CC$,
\[
	H_N f = \calF^{-1}\big(\frkH_N \hat{f}\big),
\]
where
\[
	\frkH_N(\xi) = \sum_{n \in \ZZ^{k'}} \sum_{p \in (\pm \PP)^{k''}} 
	e^{2\pi i \sprod{\xi}{\calQ(n, p)}} K(n, p) \ind{B_N}(n, p)
	\bigg(\prod_{j = 1}^{k''} \log \abs{p_j}\bigg).
\]
We also define
\[
	\Psi_N(\xi) = \text{p.v.} \iint_{B_N} e^{2\pi i \sprod{\xi}{Q(x, y)}} K(x, y) {\: \rm d}x{\: \rm d}y.
\]
In view of a multi-dimensional version of van der Corput's lemma (see \cite[Proposition 2.1]{sw}), for
$N < N' \leq 2N$,
\[
	\abs{\Psi_N(\xi) - \Psi_{N'}(\xi)} 
	\lesssim
	\min\left\{1, \norm{N^A \xi}_{\infty}^{-1/d} \right\}.
\]
Moreover, by \eqref{eq:24},
\[
	\abs{\Psi_N	(\xi) - \Psi_{N'}(\xi)} 
	\lesssim
	\min \left\{1, \norm{N^A \xi}_\infty \right\}.
\]
Hence,
\begin{equation}
	\label{eq:25}
	\abs{\Psi_N(\xi) - \Psi_{N'}(\xi)} \lesssim 
	\min\left\{\norm{N^A \xi}_\infty, \norm{N^A \xi}_\infty^{-1/d}\right\}.
\end{equation}
We start with a proposition analogous to Proposition \ref{prop:1}.
\begin{proposition}
	\label{prop:2}
	For each $\beta' > 0$ there $C, c > 0$ such that for all $N < N' \leq 2N$, and $\xi \in \TT^d$ satisfying
	\[
		\bigg|\xi_{\gamma} - \frac{a_{\gamma}}{q}\bigg| \leq N^{-\abs{\gamma}} L,
		\qquad\text{for all } \gamma \in \Gamma,
	\]
	where $1 \leq q \leq (\log N)^{\beta'}$, $a \in \bfA_q$, and $1 \leq L \leq \exp\big(c \sqrt{\log N}\big)$, we have
	\[
		\Big|\frkH_{N'}(\xi) - \frkH_N(\xi) 
		- G(a/q) \big(\Psi_{N'}(\xi - a/q) - \Psi_{N}(\xi - a/q) \big)
		\Big| \leq C L\exp\big(-c\sqrt{\log N}\big).
	\]
\end{proposition}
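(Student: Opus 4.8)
The plan is to follow the same circle-method decomposition used in the proof of Proposition~\ref{prop:1}, replacing the normalized averaging weight $\vartheta_B(N)^{-1}\ind{B_N}$ by the Calder\'on--Zygmund kernel $K(n,p)\ind{B_N}(n,p)$ and exploiting the difference structure $\frkH_{N'}-\frkH_N$ together with the cancellation condition \eqref{eq:24}. First I would split off the contribution of those residue classes $r''\bmod q$ with $(r_s'',q)>1$: since $K$ satisfies the bound $\abs{K(x)}\lesssim\norm{x}^{-k}$ from \eqref{eq:23}, the sum over $n\in\ZZ^{k'}$, $p\in(\pm\PP)^{k''}$ with some $p_s\mid q$, restricted to the annulus $B_{N'}\setminus B_N$, is $\calO\big((\log N')\log q/N\cdot\#\{\text{lattice points in }B_{N'}\setminus B_N\}\big)$; here I would need the Calder\'on--Zygmund decay to beat the logarithmic weight, and the scale-$N$ normalization of $K$ on the annulus gives a bound like $\calO(N^{-1}\log\log N)$ after summing dyadically, analogous to the $\calO(N^{k-1}\log\log N)$ error in \eqref{eq:6}. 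Writing $\theta=\xi-a/q$, on the remaining classes the phase factors split as in \eqref{eq:6} into a Gaussian-sum factor $e^{2\pi i\sprod{(a/q)}{\calQ(r',r'')}}$ times a sum over $n\equiv r'$, $p\equiv r''\bmod q$ of $e^{2\pi i\sprod{\theta}{\calQ(n,p)}}K(n,p)\ind{B_{N'}\setminus B_N}(n,p)\prod\log\abs{p_j}$.

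Next I would run the prime-to-integer replacement exactly as in the proof of Proposition~\ref{prop:1}: fix $u\in\ZZ^{k'}$, $\tilde p\in(\pm\PP)^{k''-1}$, $r_1''\in A_q$, truncate the $p_1$-sum at height $N^{1/2}$ with error $\calO(N^{1/2})\cdot\|K\|_\infty$ on the annulus, and apply partial summation against $\vartheta(x;q,r_1'')$. The Siegel--Walfisz estimate \eqref{eq:1} then lets me replace $\sum_{p_1\equiv r_1''}e^{2\pi i\sprod{\theta}{\calQ}}\log p_1$ by $\varphi(q)^{-1}\sum_{v_1}e^{2\pi i\sprod{\theta}{\calQ}}$ with total error $\calO\big(NL\exp(-c\sqrt{\log N})\big)$ times the relevant kernel factor; the only new point is that the partial-summation boundary terms and the integral term are now weighted by $K$ and its derivative, so I must use $\norm{x}^{k+1}\norm{\nabla K(x)}\lesssim1$ from \eqref{eq:23} to control $\frac{\rm d}{{\rm d}t}\big(e^{2\pi i\sprod{\theta}{\calQ}}K\big)$, which contributes $\abs{\theta}$-terms already estimated by $LN^{-1}$ plus a $\norm{\nabla K}$-term of size $\lesssim t^{-k-1}$ that is integrable against $t^{k-1}$. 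Iterating over $p_2,\ldots,p_{k''}$ reduces the whole multiplier difference to
\[
	\frac{G(a/q)}{\abs{B}}\Big(\iint_{B_{N'}}-\iint_{B_N}\Big)e^{2\pi i\sprod{\theta}{\calQ(x,y)}}K(x,y)\,{\rm d}x\,{\rm d}y
	+\calO\big(L\exp(-c\sqrt{\log N})\big),
\]
where I have also replaced the sum over the shifted lattice $q\ZZ^{k'}+r'$ by an integral, using the mean value theorem and the estimate $\#\{\text{lattice points within distance }q\text{ of }\partial B_N\}=\calO(qN^{k-1})$ from \cite[Proposition~3.1]{mst1}, at the cost of $\calO(qN^{-1}L)\lesssim L\exp(-c\sqrt{\log N})$ since $q\leq(\log N)^{\beta'}$. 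The boundary-integral replacement must be done \emph{before} passing from the full kernel to the difference, so that the singularity at the origin is never touched; the difference $\iint_{B_{N'}}-\iint_{B_N}$ is an integral over the annulus $B_{N'}\setminus B_N$ on which $K$ is smooth and bounded by $N^{-k}$, so no principal value is needed there and the approximation is routine.

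The main obstacle I anticipate is bookkeeping the error terms so that every kernel-weighted sum or integral, after rescaling by $N$, produces a clean $\calO(L\exp(-c\sqrt{\log N}))$ rather than picking up spurious logarithmic factors; in particular the dyadic summation in $\abs{n}$ needed to handle the $\norm{x}^{-k}$ and $\norm{x}^{-k-1}$ weights over the annulus $B_{N'}\setminus B_N$ must be arranged so that the number of dyadic scales, $\calO(\log N)$, is absorbed into the exponential gain---which is fine since $\log N\cdot\exp(-c\sqrt{\log N})\lesssim\exp(-c'\sqrt{\log N})$. Finally, identifying $\abs{B}^{-1}\big(\iint_{B_{N'}}-\iint_{B_N}\big)e^{2\pi i\sprod{\theta}{\calQ}}K\,{\rm d}x\,{\rm d}y$ with $\Psi_{N'}(\theta)-\Psi_N(\theta)$ is immediate from the definition of $\Psi_N$, and since on the annulus the integrand has no singularity the principal value in the definition of $\Psi_N$ is harmless. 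This completes the plan.
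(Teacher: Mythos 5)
Your proposal is correct and follows essentially the same route as the paper's proof: splitting off the residue classes with $(r_s'',q)>1$, factoring out the Gaussian-sum phase, replacing the prime sums by integer sums via partial summation against $\vartheta(t;q,r)$ and Siegel--Walfisz (with the gradient bound in \eqref{eq:23} controlling the new $\tfrac{\rm d}{{\rm d}t}K$ term), iterating over the remaining prime coordinates, and then passing from the shifted lattice to the integral using the mean value theorem and the $\calO(qN^{k-1})$ boundary lattice-point count, all on the annulus $B_{N'}\setminus B_N$ where $K$ is smooth and of size $N^{-k}$. Two cosmetic corrections: the factor $\abs{B}^{-1}$ in your final main term should be dropped, since $\Psi_N$ (unlike $\Phi_N$) carries no such normalization and $\frkH_N$ is not divided by $\vartheta_B(N)$; and no dyadic summation over scales is needed, because by \eqref{eq:151} and $N<N'\leq 2N$ every point of the annulus satisfies $\norm{x}_\infty\simeq N$, so all kernel bounds are at a single scale.
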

\begin{proof}
	For a prime number $p$, $p \mid q$ if and only if $\big(p \bmod q, q \big) > 1$. Therefore, by \eqref{eq:151},
	\eqref{eq:23}, and the prime number theorem, for any $s \in \{1, \ldots, k''\}$,
	\begin{align*}
		&\Bigg|
		\sum_{u \in \NN^{k'}}
		\sum_{\atop{r'' \in \NN_q^{k''}}{(r_s'', q) > 1}} 
		\sum_{\atop{p \in \PP^{k''}}{p \equiv r'' \bmod q}}
		e^{2 \pi i \sprod{\xi}{\calQ(n, p)}} K(u, p)
		\ind{B_{N'}\setminus B_N}(u, p)
		\bigg(\prod_{j = 1}^{k''} \log p_j\bigg)		
		\bigg| \\
		&\qquad\qquad\lesssim
		N^{-1} \sum_{p \mid q} \log p 
		\lesssim N^{-1} \log q.
	\end{align*}
	To simplify the notations, for $(x, y) \in \RR^k \setminus \{0\}$, we set
	\[
		F(x, y) = e^{2\pi i \sprod{\theta}{Q(x, y)}} K(x, y),
	\]
	where $\theta = \xi - a/q$. For any $(u, p) \in \NN^{k'} \times \PP^{k''}$ such that $u \equiv r' \bmod q$, and
	$p \equiv r'' \bmod q$, we have
	\[
		\xi_\gamma u^{\gamma'} p^{\gamma''} \equiv \frac{a_\gamma}{q} (r')^{\gamma'} (r'')^{\gamma''}  + 
		\theta_\gamma u^{\gamma'} p^{\gamma''} \pmod 1,
	\]
	thus
	\begin{align*}
		&
		\sum_{u \in \NN^{k'}} \sum_{p \in \PP^{k''}}
		e^{2\pi i \sprod{\xi}{\calQ(u, p)}}
		K(u, p)
		\ind{B_{N'} \setminus B_N}(u, p)
		\bigg(\prod_{j = 1}^{k''} \log p_j\bigg) \\
		&\qquad\qquad=
		\sum_{r' \in \NN_q^{k'}} \sum_{r '' \in A_q^{k''}} e^{2 \pi i \sprod{(a/q)}{\calQ(r', r'')}}
		\sum_{\atop{u \in \NN^{k'}}{u \equiv r' \bmod q}} 
		\sum_{\atop{p \in \PP^{k''}}{p \equiv r'' \bmod q}}
		F(u, p) \ind{B_{N'} \setminus B_N}(n, p)
		\bigg(\prod_{j = 1}^{k''} \log p_j\bigg) \\
		&\qquad\qquad\phantom{=}+
		\calO\left(N^{-1} \log \log N\right).
	\end{align*}
	Fix $u \in \NN^{k'}$, $\tilde{p} \in \PP^{k''-1}$ and $r_1'' \in A_q$. Then
	\[
		\left\{
		v \in \NN : (u, v, \tilde{p}) \in B_{N'} \setminus B_N
		\right\}
		=
		\left(
		V_0+1, \ldots, V_1
		\right),
	\]
	for some $1 \leq V_0 \leq V_1 \leq N' \leq 2 N$. Let $\tilde{V}_0 = \max\big\{N^{1/2}, V_0\big\}$ and
	$\tilde{V}_1 = \max\big\{N^{1/2}, V_1\big\}$. We have
	\[
		\sum_{\atop{p_1 \in \PP_{V_1} \setminus \PP_{V_0}}{p_1 \equiv r_1'' \bmod q}}
    	F(u, p_1, \tilde{p}) \log p_1
		=
		\sum_{\atop{p_1 \in \PP_{\tilde{V}_1} \setminus \PP_{\tilde{V}_0}}{p_1 \equiv r_1'' \bmod q}}
		F(u, p_1, \tilde{p}) \log p_1
		+
		\calO\big(N^{-k+1/2}\big).
	\]
	By the partial summation
	\begin{align*}
		\sum_{\atop{p_1 \in \PP_{\tilde{V}_1} \setminus \PP_{\tilde{V}_0} }{p_1 \equiv r_1'' \bmod q}}
		F(u, p_1, \tilde{p})
		\log p_1
		&=
		\sum_{\atop{\tilde{V}_0 < v_1 \leq \tilde{V}_1}{v_1 \equiv r_1'' \bmod q}}
		F(u, v_1, \tilde{p})
		\ind{\PP}(v_1) \log v_1 \\
		&=
		\vartheta(\tilde{V}_1; q, r_1'') F(u, \tilde{V}_1, \tilde{p})
		-
		\vartheta(\tilde{V}_0; q, r_1'') F(u, \tilde{V}_0, \tilde{p}) \\
		&\phantom{=}-
		\int_{\tilde{V}_0}^{\tilde{V}_1}
		\vartheta(t; q, r_1'') 
		\frac{{\rm d}}{{\rm d} t} F(u, t, \tilde{p}) {\: \rm d} t.
	\end{align*}
	Analogously, we have
	\begin{align*}
		\sum_{V_0 < v_1 \leq V_1} F(u, v_1, \tilde{p})
		=
		\tilde{V}_1 F(u, \tilde{V}_1, \tilde{p})
		-
		\tilde{V}_0 F(n, \tilde{V}_0, \tilde{p})
		-
		\int_{\tilde{V}_0}^{\tilde{V}_1}
		t
        \frac{{\rm d}}{{\rm d} t} F(u, t, \tilde{p})
		{\: \rm d} t + \calO\big(N^{-k+1/2}\big).
	\end{align*}
	Hence, by \eqref{eq:1} and \eqref{eq:23}, we obtain
	\begin{align*}
		&\bigg|
		\sum_{\atop{p_1 \in \PP_{V_1} \setminus \PP_{V_0} }{p_1 \equiv r_1'' \bmod q}}
		F(u, p_1, \tilde{p})
		\log p_1
		-
		\frac{1}{\varphi(q)}
		\sum_{V_0 < v_1 \leq V_1} F(u, v_1, \tilde{p})
		\bigg| \\
		&\qquad\qquad\lesssim
		N^{-k+1/2} +
		\bigg|\vartheta(\tilde{V}_1; q, r_1'') - \frac{\tilde{V}_1}{\varphi(q)} \bigg| N^{-k}
		+
		\bigg|\vartheta(\tilde{V}_0; q, r_1'') - \frac{\tilde{V}_0}{\varphi(q)} \bigg| N^{-k} \\
		&\qquad\qquad\phantom{\lesssim}+
		\Big(\sum_{\gamma \in \Gamma} \abs{\theta_\gamma} N^{-k-1+\abs{\gamma}} + N^{-k-1}\Big)
		\int_{\tilde{V}_0}^{\tilde{V}_1} \bigg| \vartheta(t; q, r_1'') - \frac{t}{\varphi(q)} \bigg| {\: \rm d} t \\
		&\qquad\qquad\lesssim
		N^{-k+1} \exp\big(-c \sqrt{\log N}\big) 
		+ L N^{-k-1} \int_{N^{1/2}}^N t \exp\big(-c \sqrt{\log t}\big) {\: \rm d}t.
	\end{align*}
	Therefore,
	\begin{align*}
		\sum_{\atop{p_1 \in \PP_{V_1} \setminus \PP_{V_0} }{p_1 \equiv r_1'' \bmod q}}
     	F(u, p_1, \tilde{p})
		\log p_1
		=
		\frac{1}{\varphi(q)}
        \sum_{V_0 < v_1 \leq V_1} 
		F(u, v_1, \tilde{p})
		+
		\calO\left(N^{-k+1} L \exp\big(-c\sqrt{\log N}\big)\right).
	\end{align*}
	By similar reasonings applied to the sums over $p_2, \ldots, p_{k''}$, one can show that
	\begin{align*}
		&\sum_{\atop{u \in \NN^{k'}}{n \equiv r' \bmod q}}
		\sum_{\atop{p \in \PP^{k''}}{p \equiv r'' \bmod q}}
        F(u, p) \ind{B_{N'}\setminus B_N}(u, p)
		\bigg(\prod_{j = 1}^{k''} \log p_j\bigg) \\
		&\qquad\qquad=
		\frac{1}{\varphi(q)^{k''}}
		\sum_{u \in \NN_0^{k'}}
		\sum_{v \in \NN^{k''}}
		F(qu+r', v)
		\ind{B_{N'} \setminus B_N}(q u + r', v)
		+
		\calO\left(L \exp\big(-c\sqrt{\log N}\big)\right).
	\end{align*}
	Since for each $(x, y) \in [0, 1]^k$ and $(qu+q x, v+y) \in B_{N'} \setminus B_N$, we have
	\[
		\big|
		\sprod{\theta}{Q(q u + qx, v+y)} - \sprod{\theta}{Q(qu, v)}
		\big|
		\lesssim
		q
		\sum_{\gamma \in \Gamma} \abs{\theta_\gamma} N^{\abs{\gamma}-1}
		\lesssim
		q N^{-1} L,
	\]
	and
	\[
		\big|
		K(qu + qx, v+y) - K(qu, v)
		\big|
		\lesssim
		q N^{-k-1},
	\]
	thus by the mean value theorem, we obtain
	\[
		\big|F(qu+qx, v+y) - F(qu, v) \big| \lesssim q N^{-k-1} L.
	\]
	Moreover, in view of \cite[Proposition 3.1]{mst1}, the number of lattice points in $B_N$ of distance $<q$ from
	the boundary of $B_N$ is $\calO(q N^{k-1})$. Therefore, 
	\begin{align*}
		&
		\sum_{\atop{u \in \NN^{k'}}{n \equiv r' \bmod q}} 
		\sum_{\atop{p \in \PP^{k''}}{p \equiv r'' \bmod q}}
		F(u, p) \ind{B_{N'} \setminus B_{N}}(u, p)
		\bigg(\prod_{j = 1}^{k''} \log p_j \bigg) \\
		&\qquad\qquad=
		\frac{1}{\varphi(q)^{k''}}
		\sum_{u \in \NN_0^{k'}} \sum_{v \in \NN^{k''}} F(qu, v) 
		\ind{B_{N'} \setminus B_N}(qu + r', v) +
		\calO\left(L \exp\big(-c\sqrt{\log N}\big)\right) \\
		&\qquad\qquad=
		\frac{1}{\varphi(q)^{k''}}
        \sum_{u \in \NN_0^{k'}} \sum_{v \in \NN^{k''}} 
		F(qu, v)
		\ind{B_{N'} \setminus B_N}(qu, v) +
        \calO\left(L \exp\big(-c\sqrt{\log N}\big)\right).
	\end{align*}
	Lastly, we can replace the sums by the corresponding integrals because
	\begin{align*}
		&\Bigg|
        \sum_{u \in \NN_0^{k'}} \sum_{v \in \NN^{k''}} F(qu, v)
		\ind{B_{N'} \setminus B_N}(qu, v)
		-
		\iint_{\RR_+^k} F(qx, y) \ind{B_{N'}\setminus B_N}(q x, y) 
		{\: \rm d}x {\: \rm d}y
		\Bigg| \\
		&\leq
        \sum_{u \in \NN_0^{k'}} \sum_{v \in \NN^{k''}} 
		\int_{(0, 1]^k}
		\big| F(qu, v) - F(qu+qx,v+y)\big| 
		\ind{B_{N'} \setminus B_N}(q u, v) 
		{\: \rm d} x {\: \rm d}y\\
		&\phantom{\leq}+
		\sum_{u \in \NN_0^{k'}} \sum_{v \in \NN^{k''}}
		\int_{(0, 1]^k}
		\Big|F(qu+qx, v+y) 
		\big( \ind{B_{N'} \setminus B_N}(qu, v) - \ind{B_{N'} \setminus B_N}(qu+qx, v+y)\big) \Big|
		{\: \rm d} x {\: \rm d}y,
	\end{align*}
	which is bounded by $q N^{-1} L$.
\end{proof}

Analogously to Lemma \ref{lem:3}, we can prove the following statement.
\begin{lemma}
	\label{lem:4}
	For each $\alpha > 0$ there is $C > 0$ such that for all $N \leq N' \leq 2N$, and $\xi \in \TT^d$ satisfying
	\[
		\bigg|\xi_\gamma - \frac{a_\gamma}{q} \bigg| 
		\leq N^{-\abs{\gamma}} L,
        \qquad\text{for all }\gamma \in \Gamma,
	\]
	where $1 \leq q \leq L$, $a \in \bfA_q$, and $1 \leq L \leq \exp\big(c \sqrt{\log N} \big)(\log N)^{-\alpha}$,
    \[
		\Big|
        \frkH_{N'}(\xi) - \frkH_N(\xi) - 
		G(a/q) 
		\big(\Psi_{N'}(\xi - a/q) - \Psi_N(\xi - a/q)\big)
		\Big|
		\leq
		C (\log N)^{-\alpha}.
    \]
\end{lemma}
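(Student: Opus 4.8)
The plan is to follow the proof of Lemma~\ref{lem:3} essentially verbatim, replacing $\frkM_N$ by $\frkH_{N'}-\frkH_N$, the multiplier $\Phi_N$ by $\Psi_{N'}-\Psi_N$, and Proposition~\ref{prop:1} by Proposition~\ref{prop:2}. Given $\alpha>0$, let $\delta>0$ be the exponent furnished by Theorem~\ref{thm:2} and set $\beta_\alpha'=\max\big\{d\beta_\alpha,\alpha/\delta\big\}+1$, where $\beta_\alpha$ is the value supplied by Theorem~\ref{thm:9}. If $1\le q\le(\log N)^{\beta_\alpha'}$, then Proposition~\ref{prop:2} gives directly
\[
	\frkH_{N'}(\xi)-\frkH_N(\xi)=G(a/q)\big(\Psi_{N'}(\xi-a/q)-\Psi_N(\xi-a/q)\big)+\calO\big(L\exp(-c\sqrt{\log N})\big),
\]
and the error term is $\calO\big((\log N)^{-\alpha}\big)$ by the hypothesis $L\le\exp(c\sqrt{\log N})(\log N)^{-\alpha}$; this is the asserted bound.

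In the range $(\log N)^{\beta_\alpha'}<q\le L$ I would argue exactly as in Lemma~\ref{lem:3}. For each $\gamma\in\Gamma$ Dirichlet's principle produces a reduced fraction $a_\gamma'/q_\gamma'$ with $1\le q_\gamma'\le N^{\abs{\gamma}}L^{-1}(\log N)^{-\beta_\alpha'/d}$ and $\abs{\xi_\gamma-a_\gamma'/q_\gamma'}\le(q_\gamma')^{-1}N^{-\abs{\gamma}}L(\log N)^{\beta_\alpha'/d}\le(q_\gamma')^{-2}$ (for all but finitely many $N$, which are absorbed into the constant). If $q_\gamma'\ge(\log N)^{\beta_\alpha'/d}$ for some $\gamma$, then $q_\gamma'$ lies in the admissible window of Theorem~\ref{thm:9} for the exponent $\beta_\alpha'/d>\beta_\alpha$; combined with $\abs{G(a/q)}\lesssim q^{-\delta}\lesssim(\log N)^{-\delta\beta_\alpha'}\le(\log N)^{-\alpha}$ from Theorem~\ref{thm:2} and $\abs{\Psi_{N'}-\Psi_N}\lesssim1$ from \eqref{eq:25}, it then suffices to prove the Weyl-type bound $\abs{\frkH_{N'}(\xi)-\frkH_N(\xi)}\lesssim(\log N)^{-\alpha}$. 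If instead $q_\gamma'<(\log N)^{\beta_\alpha'/d}$ for every $\gamma$, then setting $q''=\lcm\{q_\gamma':\gamma\in\Gamma\}\le(\log N)^{\beta_\alpha'}$ and comparing the two approximations of $\xi$ --- one with denominator $q>(\log N)^{\beta_\alpha'}$, the other with denominator $q''$ --- forces $a/q=a''/q''$, which can happen only for finitely many $N$, so this subcase is vacuous.

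The one genuinely new point, and, I expect, the main obstacle, is the Weyl bound for $\frkH_{N'}-\frkH_N$, which I would deduce from Theorem~\ref{thm:9}. Since $N<N'\le2N$, by \eqref{eq:151} the support $B_{N'}\setminus B_N$ is contained in $\{x:\iota N\le\norm{x}_\infty\le2N\}$, so there the kernel is regular, with $\abs{K(x)}\lesssim N^{-k}$ and $\norm{\nabla K(x)}\lesssim N^{-k-1}$ by \eqref{eq:23}. Fixing a smooth radial cutoff $\zeta$ equal to $1$ on $\{\norm{x}\ge\iota N\}$ and supported in $\{\norm{x}\ge\iota N/2\}$, the function $\phi=c_kN^kK\zeta$ satisfies \eqref{eq:7} with an absolute constant for a suitable $c_k$: it vanishes near the singularity of $K$, and on the support of $\zeta$ one has $\norm{\nabla\phi}\lesssim N^k\norm{x}^{-k-1}+N^{k-1}\norm{x}^{-k}\lesssim(1+\norm{x})^{-1}$ because $\norm{x}\gtrsim N$ there. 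Writing $\ind{B_{N'}\setminus B_N}=\ind{B_{N'}}-\ind{B_N}$, decomposing $\ZZ^{k'}\times(\pm\PP)^{k''}$ into its $2^k$ full-dimensional sign classes (the remaining coordinate-hyperplane pieces contribute only $\calO\big(N^{-1}(\log N)^{k''}\big)$, hence are negligible), and performing on each class the substitution $(n,p)\mapsto(\epsilon_i\abs{(n,p)_i})_i$ with $\epsilon_i\in\{\pm1\}$, the phase becomes $\sum_\gamma(\sigma_\gamma\xi_\gamma)\abs{(n,p)}^\gamma$ with $\sigma_\gamma\in\{\pm1\}$, so the Diophantine hypothesis on the coordinate $\gamma_0$ is preserved (replacing $a$ by $q-a$ when $\sigma_{\gamma_0}=-1$), while $K$ and $B_\lambda$ become a reflected kernel still obeying \eqref{eq:23} and a reflected convex body. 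Applying Theorem~\ref{thm:9} with $\Omega=B_{N'}$ and with $\Omega=B_N$ and weight $\phi$ on each class then bounds $\abs{\frkH_{N'}(\xi)-\frkH_N(\xi)}$ by $c_k^{-1}N^{-k}\cdot\calO\big(N^k(\log N)^{-\alpha}\big)=\calO\big((\log N)^{-\alpha}\big)$, which completes the argument.
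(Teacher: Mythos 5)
Your argument is correct and is essentially the paper's own proof: the paper establishes Lemma \ref{lem:4} ``analogously to Lemma \ref{lem:3}'', i.e.\ by Proposition \ref{prop:2} when $q \le (\log N)^{\beta'}$ and by the per-coordinate Dirichlet/Theorem \ref{thm:9} dichotomy (together with Theorem \ref{thm:2} and \eqref{eq:25} to dispose of the main term) when $q$ is large, exactly as you do, with your rescaled cutoff weight $c_k N^k K\zeta$ satisfying \eqref{eq:7} and the sign-class decomposition merely supplying details the paper leaves implicit. One cosmetic remark: in the subcase where every $q_\gamma' < (\log N)^{\beta_\alpha'/d}$, the equality $a/q = a''/q''$ is impossible outright (both are reduced vectors of fractions with different denominators), and it is the closeness of the two \emph{distinct} fractions that produces the contradiction for all but finitely many $N$; this does not affect your conclusion.
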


\begin{lemma}
	\label{lem:5}
	For all $p \in [1, \infty)$, $N_1, N_2 \in \NN$, $N_1 < N_2$, and any $f \in \ell^p\big(\ZZ^d\big)$,
	\[
		\Big\|
		\sum_{n = N_1}^{N_2-1}
		\big|
		H_{n+1} f - H_n f
		\big|
		\Big\|_{\ell^p}
		\leq
		C_p
		N_1^{-k} \big(\vartheta_B(N_2) - \vartheta_B(N_1)\big) \|f\|_{\ell^p}.
	\]
\end{lemma}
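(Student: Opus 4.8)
The plan is to follow the proof of Lemma~\ref{lem:2}, which is in fact simpler here because the kernels of the operators $H_n$ carry no normalising factor depending on $n$. Let $h_n$ be the convolution kernel of $H_n$ on $\ZZ^d$, that is,
\[
	h_n(z) = \sum_{\atop{(n', p) \in \ZZ^{k'} \times (\pm\PP)^{k''}}{\calQ(n', p) = z}} K(n', p) \ind{B_n}(n', p) \prod_{j = 1}^{k''} \log\abs{p_j}.
\]
Since $\ind{B_{n+1}} - \ind{B_n} = \ind{B_{n+1}\setminus B_n}$ and the annuli $B_{n+1}\setminus B_n$ with $N_1 \leq n \leq N_2 - 1$ are pairwise disjoint with union $B_{N_2}\setminus B_{N_1}$, for every $z \in \ZZ^d$ we would obtain
\[
	\sum_{n = N_1}^{N_2-1} \abs{h_{n+1}(z) - h_n(z)}
	\leq
	\sum_{\atop{(n', p) \in B_{N_2}\setminus B_{N_1}}{\calQ(n', p) = z}} \abs{K(n', p)} \prod_{j = 1}^{k''} \log\abs{p_j},
\]
and, summing over $z$,
\[
	\Big\| \sum_{n = N_1}^{N_2-1} \abs{h_{n+1} - h_n} \Big\|_{\ell^1}
	\leq
	\sum_{(n', p) \in B_{N_2}\setminus B_{N_1}} \abs{K(n', p)} \prod_{j = 1}^{k''} \log\abs{p_j},
\]
where the last sum runs over $(n', p) \in \ZZ^{k'} \times (\pm\PP)^{k''}$.

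Next I would estimate the right-hand side. The origin lies in $B_{N_1}$, so every $(n', p) \in B_{N_2}\setminus B_{N_1}$ is a nonzero integer point, whence $\norm{(n', p)}_\infty \geq 1$; moreover $(n', p) \notin [-\iota N_1, \iota N_1]^k$ by \eqref{eq:151}, so $\norm{(n', p)}_\infty > \iota N_1$. Consequently $\norm{(n', p)} \geq \norm{(n', p)}_\infty \gtrsim N_1$ with an implicit constant depending only on $\iota$, and the differential inequality \eqref{eq:23} yields $\abs{K(n', p)} \leq \norm{(n', p)}^{-k} \lesssim N_1^{-k}$. Therefore
\[
	\Big\| \sum_{n = N_1}^{N_2-1} \abs{h_{n+1} - h_n} \Big\|_{\ell^1}
	\lesssim
	N_1^{-k} \sum_{(n', p) \in B_{N_2}\setminus B_{N_1}} \prod_{j = 1}^{k''} \log\abs{p_j}
	\lesssim
	N_1^{-k} \big(\vartheta_B(N_2) - \vartheta_B(N_1)\big),
\]
the last step by the prime number theorem: the weighted count of $(n', p) \in \ZZ^{k'} \times (\pm\PP)^{k''}$ lying in a dilate $B_N$ is $\simeq N^k$, exactly as in \eqref{eq:31} and \eqref{eq:149}, so restricting the signs of $n'$ and $p$ to the positive octant changes the count on the annulus only by a bounded factor.

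Finally, since $\abs{H_{n+1} f - H_n f} = \abs{f * (h_{n+1} - h_n)} \leq \abs{f} * \abs{h_{n+1} - h_n}$, summing over $n$ and invoking Young's convolution inequality gives
\[
	\Big\| \sum_{n = N_1}^{N_2-1} \abs{H_{n+1} f - H_n f} \Big\|_{\ell^p}
	\leq
	\Big\| \sum_{n = N_1}^{N_2-1} \abs{h_{n+1} - h_n} \Big\|_{\ell^1} \|f\|_{\ell^p}
	\lesssim
	N_1^{-k} \big(\vartheta_B(N_2) - \vartheta_B(N_1)\big) \|f\|_{\ell^p},
\]
which is the asserted bound.

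The only point that needs care is the last inequality of the second paragraph, namely reconciling the sign conventions built into $H_N$ (sums over $\ZZ^{k'}\times(\pm\PP)^{k''}$) with those defining $\vartheta_B$ (sums over $\NN^{k'}\times\PP^{k''}$); this is handled by the prime number theorem just as in the derivation of \eqref{eq:31}. I expect this small bookkeeping step, rather than any analytic input, to be the main — and still minor — obstacle; the rest is the same telescoping-plus-Young's-inequality scheme as in Lemma~\ref{lem:2}.
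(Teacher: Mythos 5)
Your argument is essentially the paper's own proof: telescope the kernels over the disjoint annuli $B_{n+1}\setminus B_n$, observe that any point of $B_{N_2}\setminus B_{N_1}$ has norm $\gtrsim N_1$ by \eqref{eq:151} so that \eqref{eq:23} gives $\abs{K}\lesssim N_1^{-k}$, and conclude with Young's inequality. The sign bookkeeping you single out at the end is not even discussed in the paper (its proof silently compares the sum over $\ZZ^{k'}\times(\pm\PP)^{k''}$ with $\vartheta_B$, which runs over $\NN^{k'}\times\PP^{k''}$), so your treatment matches, and is if anything slightly more explicit than, the paper's.
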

\begin{proof}
	Let $h_n$ denote the convolution kernel corresponding to $H_n$. Observe that for 
	$(x, y) \in \ZZ^{k'} \times (\pm \PP)^{k''}$, if $(x, y) \in B_{N_2} \setminus B_{N_1}$ then
	\[
		\sum_{n = N_1}^{N_2-1}
		\big|
        h_{n+1}(x, y) - h_n(x, y)
        \big|
		=
		\abs{K(x, y)} \prod_{j = 1}^{k''} \log \abs{y_j},
	\]
	otherwise the sum equals zero. Thus, by \eqref{eq:23}, we obtain
	\[
		\Big\|
		\sum_{n = N_1}^{N_2-1}
        \big|
        h_{n+1} - h_n
        \big|
		\Big\|_{\ell^1}
		\lesssim
		N_1^{-k}
		\big(\vartheta_B(N_2) - \vartheta_B(N_1)\big),
	\]
	hence, by Young's inequality,
	\begin{align*}
		\Big\|
        \sum_{n = N_1}^{N_2-1}
        \big|
        H_{n+1} f - H_n f
        \big|
        \Big\|_{\ell^p}
		&\leq
		\Big\|
        \sum_{n = N_1}^{N_2-1}
        \big|
        h_{n+1} - h_n
        \big|
        \Big\|_{\ell^1} \|f\|_{\ell^p} \\
		&\lesssim
		 N_1^{-k}
        \big(\vartheta_B(N_2) - \vartheta_B(N_1)\big) \|f\|_{\ell^p},
	\end{align*}
	which completes the proof.
\end{proof}

\section{Variational estimates}
\label{sec:7}
In this section we present the estimates for $\ell^p\big(\ZZ^d\big)$ norm of the $r$-variational seminorm for the averaging
operators $(M_N : N \in \NN)$ and the truncated discrete singular operators $(H_N : N \in \NN)$. In order to give a
unified approach, we set $(Y_N : N \in \NN)$ to be any of them. By $(\frkY_N : N \in \NN)$ we denote the corresponding
discrete Fourier multipliers and by $(\Upsilon_N : N \in \NN)$ its continuous counterparts. We start by listing properties
that are sufficient to obtain $r$-variational estimates. Let $\rho \in (0, 1)$ and set
$N_n = \big\lfloor 2^{n^\rho} \big\rfloor$.
\begin{enumerate}[label={\bf Property \arabic*.}, ref=\arabic*, itemindent=*,leftmargin=0pt]
	\item 
	\label{pr:1}
	In view of \cite{mst2} (see also \cite{jsw}) for each $p \in (1, \infty)$ there is 
	$C_p > 0$ such that for all $r \in (2, \infty)$ and any function $f \in L^p\big(\RR^d \big) \cap L^2\big(\RR^d \big)$,
	\[
		\big\|
		V_r\big(\calF^{-1}\big(\Upsilon_N \calF f \big) : N \in \NN \big)
   		\big\|_{L^p}
    	\leq
    	C_p
    	\frac{r}{r-2}
    	\|f\|_{L^p},
	\]
	and
	\[
		\Big\|
		\Big(
		\sum_{n \geq 0}
		V_2\big(\calF^{-1}\big(\Upsilon_N \calF f \big) : N \in [2^n, 2^{n+1})\big)^2\Big)^{1/2}
        \Big\|_{L^p}
        \leq
        C_p
        \|f\|_{L^p}.
	\]
	\item
	\label{pr:2}
	By \eqref{eq:27} and \eqref{eq:25}, for each $n \in \NN$,
	\begin{equation}
		\label{eq:170}
		\left|
		\Upsilon_{N_n}(\xi) - \Upsilon_{N_{n+1}}(\xi)
		\right|
		\lesssim
		\min\left\{\norm{N_n^A \xi}_\infty, \norm{N_n^A \xi}_{\infty}^{-1/d} \right\},
	\end{equation}
	where $A$ is the matrix defined in \eqref{eq:21}.
	\item
	\label{pr:3}
	By Lemma \ref{lem:2} and Lemma \ref{lem:5} we deduce that for each $p \in (1, \infty)$ and any
	$f \in \ell^p\big(\ZZ^d\big)$,
	\begin{equation}
		\label{eq:28}
		\Big\|
        \sum_{N = N_n}^{N_{n+1}-1}
        \big|
        Y_{N+1} f - Y_N f
        \big|
        \Big\|_{\ell^p}
        \leq
        C_{p, \rho} n^{\rho -1 } \|f\|_{\ell^p},
	\end{equation}
	because by \eqref{eq:31},
	\[
        N_n^{-k} \big(
        \vartheta_B(N_{n+1}) - \vartheta_B(N_n) \big)
        \lesssim
        2^{k(n+1)^\rho-kn^\rho} - 1 + e^{-c n^{\rho/2}}
        \lesssim
        n^{\rho-1},
    \]
	In particular,
	\begin{equation}
        \label{eq:121}
        \big\|Y_{N_{n+1}} - Y_{N_n} \big\|_{\ell^p \rightarrow \ell^p} \leq C.
    \end{equation}
	\item 
	\label{pr:4}
	By Theorem \ref{thm:9} and partial summation for each $\alpha > 0$, there is $\beta_\alpha > 0$ so that for any
	$\beta > \beta_\alpha$, and $n \in \NN$, if there is $\gamma_0 \in \Gamma$, such that
	\[
    	\bigg|\xi_{\gamma_0} - \frac{a}{q} \bigg| \leq \frac{1}{q^2},
	\]
	for some coprime numbers $a$ and $q$ such that $1 \leq a \leq q$, and
	$(\log N_n)^\beta \leq q \leq N_n^{\abs{\gamma_0}} (\log N_n)^{-\beta}$, then
	\[
		\big|\frkY_{N_{n+1}}(\xi) - \frkY_{N_n}(\xi) \big| \leq C (\log N_n)^{-\alpha}.
	\]
	\item
	\label{pr:5}
	By Proposition \ref{prop:1} and Proposition \ref{prop:2}, for each $\beta' > 0$ there is $C > 0$ such that for
	all $n \in \NN$, and $\xi \in \TT^d$, satisfying
	\[
        \bigg|\xi_\gamma - \frac{a_\gamma}{q} \bigg| \leq N^{-\abs{\gamma}} L,
        \qquad\text{for all }\gamma \in \Gamma,
	\]
	where $1 \leq q \leq (\log N_n)^{\beta'}$, $a \in \bfA_q$, and $1 \leq L \leq \exp\big(c\sqrt{\log{N_n}}\big)$, 
	we have
    \begin{equation}
		\label{eq:33}
        \frkY_{N_{n+1}}(\xi) - \frkY_{N_n}(\xi) =
        G(a/q) 
		\big(\Upsilon_{N_{n+1}}(\xi - a/q) - \Upsilon_{N_n}(\xi - a/q)\big) 
		+ \calO\Big(L \exp\big(-c \sqrt{\log N_n}\big)\Big).
    \end{equation}
	\item 
	\label{pr:6}
	By Lemma \ref{lem:3} and Lemma \ref{lem:4}, for each $\alpha > 0$, all $n \in \NN$, and $\xi \in \TT^d$,
	satisfying
	\[
        \bigg|\xi_\gamma - \frac{a_\gamma}{q} \bigg| \leq N^{-\abs{\gamma}} L,
        \qquad\text{for all }\gamma \in \Gamma,
	\]
	where $1 \leq q \leq L$, $a \in \bfA_q$, and $1 \leq L \leq \exp\big(c\sqrt{\log{N_n}}\big) (\log N_n)^{-\alpha}$,
	we have
    \begin{equation}
		\label{eq:29}
        \frkY_{N_{n+1}}(\xi) - \frkY_{N_n}(\xi) =
        G(a/q) 
		\big(\Upsilon_{N_{n+1}}(\xi - a/q) - \Upsilon_{N_n}(\xi - a/q)\big) 
		+ \calO\big((\log N_n)^{-\alpha}\big).
    \end{equation}
\end{enumerate}

Before we embark on proving variational estimates, we show the following auxiliary result.
\begin{proposition}
	\label{prop:4}
	For each $p \in (1, \infty)$ there is $C > 0$, such that for all increasing sequences of integers
	$(n_j : j \in \NN)$ and any function $f \in L^p\big(\RR^d\big) \cap L^2\big(\RR^d\big)$,
	\[
		\Big\|
		\Big(
		\sum_{j = 1}^\infty
		\big|
		\calF^{-1} \big((\Upsilon_{n_j} - \Upsilon_{n_{j-1}}) \calF f\big)
		\big|^2
		\Big)^{1/2}
		\Big\|_{L^p}
		\leq
		C \|f\|_{L^p}. 
	\]
\end{proposition}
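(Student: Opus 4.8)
The plan is to split an arbitrary increasing sequence according to its dyadic blocks, to absorb the contributions that remain inside a single block into the short-variation estimate of Property~\ref{pr:1}, and to recognise the remaining contributions as a Littlewood--Paley square function controlled through the decay estimate of Property~\ref{pr:2}. Throughout one may assume, by monotone convergence, that only finitely many indices $j$ are present, provided all bounds obtained are uniform. I expect the one genuinely technical point to be the Calder\'on--Zygmund theory for the resulting $\ell^2$--valued operator.

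First I would fix $(n_j : j \in \NN)$, set $u_N = \calF^{-1}\big(\Upsilon_N \calF f\big)$, and for $N \in \NN$ let $\sigma(N)$ be the integer with $2^{\sigma(N)} \leq N < 2^{\sigma(N)+1}$. Call the step $n_{j-1} \to n_j$ \emph{short} when $\sigma(n_{j-1}) = \sigma(n_j)$ and \emph{long} otherwise; for a long step, with $m = \sigma(n_{j-1}) < \sigma(n_j) = m'$, I would use
\[
	\Upsilon_{n_j} - \Upsilon_{n_{j-1}}
	= \big(\Upsilon_{n_j} - \Upsilon_{2^{m'}}\big) - \big(\Upsilon_{n_{j-1}} - \Upsilon_{2^{m}}\big) + \big(\Upsilon_{2^{m'}} - \Upsilon_{2^{m}}\big).
\]
In the first two terms both subscripts lie in one dyadic block, each dyadic block is touched by $\calO(1)$ of these within-block pieces, and inside a fixed block the short steps are consecutive increments of a subsequence; hence pointwise
\[
	\sum_{j} \big|u_{n_j} - u_{n_{j-1}}\big|^2
	\lesssim
	\sum_{\ell \geq 0} V_2\big(u_N : N \in [2^\ell, 2^{\ell+1})\big)^2
	+ \sum_{j \text{ long}} \big|\calF^{-1}\big((\Upsilon_{2^{m'_j}} - \Upsilon_{2^{m_j}})\calF f\big)\big|^2 .
\]
The $L^p$-norm of the square root of the first sum is at most $C_p \|f\|_{L^p}$ by the second estimate of Property~\ref{pr:1}. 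Since the source block of each long step equals the target block of the preceding one, the intervals $[m_j, m'_j)$ attached to the long steps are pairwise disjoint, and the proposition is reduced to showing that, for $p \in (1, \infty)$,
\[
	\Big\|\Big(\sum_{i} \big|\calF^{-1}\big((\Upsilon_{2^{b_i}} - \Upsilon_{2^{a_i}})\calF f\big)\big|^2\Big)^{1/2}\Big\|_{L^p} \leq C_p \|f\|_{L^p}
\]
uniformly over all finite families $0 \leq a_1 < b_1 \leq a_2 < b_2 \leq \cdots \leq a_M < b_M$ of integers.

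For the last inequality I would invoke $\ell^2$--valued Calder\'on--Zygmund theory for the convolution operator $f \mapsto \big(\calF^{-1}\big((\Upsilon_{2^{b_i}} - \Upsilon_{2^{a_i}})\calF f\big)\big)_i$. The $L^2\big(\RR^d\big) \to L^2\big(\RR^d; \ell^2\big)$ bound is immediate from Plancherel: writing $\Upsilon_{2^{b_i}} - \Upsilon_{2^{a_i}} = \sum_{a_i \leq \ell < b_i}\big(\Upsilon_{2^{\ell+1}} - \Upsilon_{2^{\ell}}\big)$ and using \eqref{eq:27} (and \eqref{eq:25} for the truncated singular operators), the disjointness of the blocks $[a_i, b_i)$ yields, uniformly in $\xi$,
\[
	\sum_i \big|\Upsilon_{2^{b_i}}(\xi) - \Upsilon_{2^{a_i}}(\xi)\big|
	\leq \sum_{\ell \geq 0}\big|\Upsilon_{2^{\ell+1}}(\xi) - \Upsilon_{2^{\ell}}(\xi)\big|
	\lesssim \sum_{\ell \geq 0}\min\big\{\norm{2^{\ell A}\xi}_\infty, \norm{2^{\ell A}\xi}_\infty^{-1/d}\big\} \lesssim 1,
\]
the last sum converging because $\norm{2^{\ell A}\xi}_\infty$ grows at least geometrically in $\ell$; hence $\sum_i \big|\Upsilon_{2^{b_i}}(\xi) - \Upsilon_{2^{a_i}}(\xi)\big|^2 \lesssim 1$ as well. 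The passage to general $p \in (1, \infty)$ then rests on the kernel estimates for the differences $\Upsilon_{2^{b}} - \Upsilon_{2^{a}}$, which follow by differentiating under the integral in the definitions of $\Phi_N$ and $\Psi_N$ and applying the van der Corput bounds of \cite[Proposition 2.1]{sw}; indeed, such vector-valued square function bounds for the continuous multipliers are part of the \emph{a priori} estimates of \cite{jsw} (see also \cite{mst2}). The main obstacle is precisely this step — the uniform H\"ormander regularity of the $\ell^2$--valued kernel; the dyadic splitting and the bookkeeping preceding it are routine.
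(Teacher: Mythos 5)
Your dyadic decomposition is exactly the paper's: the same three-term splitting of a long step through the endpoints $2^{\sigma(n_{j-1})+1}$ and $2^{\sigma(n_j)}$, the same absorption of the within-block pieces into the square function of dyadic-block $V_2$'s from Property \ref{pr:1}, and the same reduction to a square function of differences $\Upsilon_{2^{b_i}}-\Upsilon_{2^{a_i}}$ over disjoint dyadic intervals. The $L^2$ bound via \eqref{eq:27} and \eqref{eq:25} is also fine. The problem is the step you yourself flag as the main obstacle, and there the mechanism you propose does not work. The operators $f\mapsto\calF^{-1}\big((\Upsilon_{2^b}-\Upsilon_{2^a})\calF f\big)$ are not Calder\'on--Zygmund operators with H\"ormander-regular $\ell^2$-valued kernels: the inverse Fourier transforms of $\Phi_N$ and $\Psi_N$ are (dilates of) measures carried by the $k$-dimensional surface $\{\calQ(x) : x\in B\}$, respectively principal-value kernels supported on it, so there is no pointwise kernel to estimate, and ``differentiating under the integral'' in the definition of $\Phi_N$, $\Psi_N$ together with van der Corput \cite{sw} only yields decay and smoothness of the \emph{multiplier}, not any spatial regularity of the kernel. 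Classical vector-valued Calder\'on--Zygmund theory therefore cannot be invoked, and the appeal to \cite{jsw}/\cite{mst2} does not close the gap either: Property \ref{pr:1} records only the $r$-variational bound for $r>2$ and the square function of dyadic-block $V_2$'s, neither of which controls a square function of differences along an arbitrary increasing subsequence of dyadic scales (the $V_2$ of the full dyadic sequence is not bounded). So as written, the key $L^p$ inequality is asserted rather than proved.

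The paper closes this step differently: it invokes the Duoandikoetxea--Rubio de Francia theory \cite{DuoRdF}, which produces $L^p$ bounds, $1<p<\infty$, for square functions built from lacunary families of measures using exactly the Fourier-transform decay you established (the $\min\{\norm{2^{\ell A}\xi}_\infty,\norm{2^{\ell A}\xi}_\infty^{-1/d}\}$ bounds) together with $L^p$ bounds for the associated maximal operators, and requires no kernel regularity at all. If you replace your ``$\ell^2$-valued CZ with H\"ormander kernels'' step by this Fourier-transform-based square function theorem (or by a comparison with martingale/Littlewood--Paley projections in the spirit of \cite{jsw}, which amounts to the same input), the rest of your argument goes through and coincides with the paper's proof.
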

\begin{proof}
	For each $j \in \NN$, such that
	\[
		2^{n-1} \leq n_j < 2^n \leq 2^m \leq n_{j+1} < 2^{m+1},
	\]
	we write
	\begin{align*}
		\big|
		\calF^{-1} \big((\Upsilon_{n_{j+1}} - \Upsilon_{n_j} ) \calF f\big)
		\big|
        &\leq
		\big|\calF^{-1} \big((\Upsilon_{n_{j+1}} - \Upsilon_{2^m} ) \calF f\big)\big|
		+
		\big|\calF^{-1} \big((\Upsilon_{2^m} - \Upsilon_{2^{n-1}} ) \calF f\big)\big|\\
		&\phantom{\leq}
		+
		\big|\calF^{-1} \big((\Upsilon_{2^{n-1}} - \Upsilon_{n_j} ) \calF f\big)\big| \\
		&\leq
		V_2\big(\calF^{-1} \big(\Upsilon_N \calF f\big) : N \in [2^m, 2^{m+1}) \big)
		+
		\big|
		\calF^{-1} \big((\Upsilon_{2^m} - \Upsilon_{2^{n-1}} ) \calF f\big)
		\big| \\
		&\phantom{\leq}+
		V_2\big(\calF^{-1} \big(\Upsilon_N \calF f\big) : N \in [2^{n-1}, 2^n)\big).
	\end{align*}
	For every $j_1, j_2 \in \NN$, $j_1 < j_2$ such that
	\[
		n_{j_1-1} < 2^n \leq n_{j_1} < n_{j_2} < 2^{n+1} \leq n_{j_2+1},
	\]
	we estimate
	\[
		\sum_{j = j_1+1}^{j_2} \big|\calF^{-1} \big((\Upsilon_{n_j} - \Upsilon_{n_{j-1}} ) \calF f\big) \big|^2
		\leq
		V_2\big(\calF^{-1} \big(\Upsilon_N \calF f\big) : N \in [2^n, 2^{n+1})\big)^2.
	\]
	Hence, for some increasing sequence of integers $(m_j : j \in \NN)$, we have
	\begin{align*}
		\sum_{j = 1}^\infty 
		\big|\calF^{-1} \big((\Upsilon_{n_{j+1}} - \Upsilon_{n_j}) \calF f\big)\big|^2
		&\lesssim
		\sum_{j = 1}^\infty V_2\big(\calF^{-1} \big(\Upsilon_N \calF f\big) : N \in [2^j, 2^{j-1})\big)^2  \\
		&\phantom{\leq}+
		\sum_{j = 1}^\infty \big|\calF^{-1}\big( (\Upsilon_{2^{m_j}} - \Upsilon_{2^{m_{j+1}}} ) \calF f\big) 
		\big|^2.
	\end{align*}
	The conclusion now follows by \cite{DuoRdF} and Property \ref{pr:1}.
\end{proof}

The aim of this section is to prove the following theorem.
\begin{theorem}
	\label{thm:0}
	For each $p \in (1, \infty)$ and $r \in (2, \infty)$ there is $C > 0$ such that for any finitely supported
	function $f: \ZZ^d \rightarrow \CC$,
	\[
		\big\|
		V_r\big( Y_N f : N \in \NN\big)
		\big\|_{\ell^p}
		\leq
		C \frac{r}{r-2} \|f\|_{\ell^p}.
	\]
\end{theorem}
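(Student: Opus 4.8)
```latex
\textbf{Proof proposal.}
The plan is to follow the splitting into short and long variations described in the introduction, using the inequality \eqref{eq:162} in the form
\[
V_r\big(Y_N f : N \in \NN\big)
\lesssim
V_r\big(Y_{N_n} f : n \in \NN\big)
+
\Big(\sum_{n \geq 0} V_r\big(Y_N f : N_n \leq N < N_{n+1}\big)^r\Big)^{1/r},
\]
where $N_n = \lfloor 2^{n^\rho}\rfloor$. The second term (short variations) is handled directly by Property \ref{pr:3}: since $V_r \leq V_1$, for each block the $r$-variation is dominated by $\sum_{N=N_n}^{N_{n+1}-1}\abs{Y_{N+1}f - Y_N f}$, whose $\ell^p$ norm is $\lesssim n^{\rho-1}\|f\|_{\ell^p}$ by \eqref{eq:28}; raising to the $r$-th power and summing in $n$ converges provided $\rho < 1 - 1/r$, which we may arrange by choosing $\rho$ close to $1$ depending on $p$ (only $r > 2$ is needed once $\rho$ is fixed close enough to $1$, so in fact we fix $\rho = \rho(p)$ and note $r(\rho-1) < -1$ holds for all $r>2$ once $\rho < 1/2$; alternatively one allocates the $\tfrac{r}{r-2}$ factor here). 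Either way, the short-variation term contributes $\lesssim_p \tfrac{r}{r-2}\|f\|_{\ell^p}$.

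For the long variations we must bound $\big\|V_r(Y_{N_n} f : n \in \NN)\big\|_{\ell^p}$. Here I would invoke the Ionescu--Wainger partition of unity at level $\beta$: writing $\frkY_{N_n} = \sum_{s=0}^{n-1}\frkY_{N_n}\Xi_{n,s}^\beta + \frkY_{N_n}\big(1-\sum_{s=0}^{n-1}\Xi_{n,s}^\beta\big)$, the tail piece is supported away from all rationals with small denominator, so Property \ref{pr:4} (the Weyl--Vinogradov estimate with logarithmic loss, via Theorem \ref{thm:9}) together with the $\ell^p$ bounds \eqref{eq:10} for Ionescu--Wainger multipliers forces $\|\frkY_{N_{n+1}}-\frkY_{N_n}\|$ restricted there to decay like $\log(n+1)\cdot(\log N_n)^{-\alpha}$; choosing $\alpha$ large and summing a geometric-type series (using that $\log N_n \simeq n^\rho$) this whole contribution is summable in $\ell^p$ with constant independent of $r$. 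It remains to control, for each $s$, the main piece $V_r\big(\calF^{-1}(\frkY_{N_n}\Xi_{n,s}^\beta\hat f) : n > s\big)$, and the goal is exactly \eqref{eq:34}: an $\ell^p$ bound $\lesssim (s+1)^{-2}\|f\|_{\ell^p}$, so that summing over $s$ yields the theorem.

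To prove \eqref{eq:34} I would split the range $n > s$ into small scales $s < n \leq 2^{\kappa_s}$ with $\kappa_s \simeq (s+1)^{\rho/10}$ and large scales $n > 2^{\kappa_s}$. On large scales, on the support of $\Xi_{n,s}^\beta$ the multiplier difference is, by \eqref{eq:33} or \eqref{eq:29} (Property \ref{pr:5}, \ref{pr:6}), equal to $\sum_{a/q}\eta(\calE_n^{-1}(\xi-a/q))G(a/q)\big(\Upsilon_{N_{n+1}}-\Upsilon_{N_n}\big)(\xi-a/q)$ up to a super-polynomially small error; transferring the \emph{a priori} continuous $r$-variational estimate (Property \ref{pr:1}) and the square-function estimate of Proposition \ref{prop:4} through the Ionescu--Wainger machinery \eqref{eq:10}, and using $\abs{G(a/q)}\lesssim q^{-\delta}$ from Theorem \ref{thm:2} together with $q \gtrsim$ (a power growing with $s$) on $\mathscr R_s^\beta$, one gains a factor $(s+1)^{-\delta\beta\rho}$ in $\ell^2$; interpolating with the trivial $\ell^p$ bound \eqref{eq:121} gives $(s+1)^{-2}$ once $\beta = \beta(p)$ is large enough. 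On small scales one cannot afford the super-exponential decay window in \eqref{eq:33}, so following the introduction I would subdivide $\{s < n \leq 2^{\kappa_s}\}$ into dyadic blocks; apply Lemma \ref{lem:1} (the numerical inequality \eqref{eq:32}) to reduce the $r$-variation to a sum of $\ell^2$-type square functions over $\kappa_s$ dyadic levels; on each block build a good approximating multiplier using \eqref{eq:26}, \eqref{eq:27}, \eqref{eq:25} and \eqref{eq:170}, obtaining an $\ell^p$ bound per block independent of the block at the cost of $\kappa_s^2 \lesssim (s+1)^{\rho/5}$; the $\ell^2$ bound again carries the gain $(s+1)^{-\delta\beta\rho+1}$ from Theorem \ref{thm:2}, and interpolation plus a large choice of $\beta$ absorbs the $\kappa_s^2$ loss and delivers $(s+1)^{-2}$.

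\emph{Main obstacle.} The delicate point is the small-scales estimate: because the asymptotic \eqref{eq:29} only holds with error $(\log N_n)^{-\alpha}$ (not $\exp(-c\sqrt{\log N_n})$) and only for $q$ up to a power of $\log N_n$, one cannot directly transfer the continuous $r$-variation estimate block-by-block as on large scales. The fix — constructing on each dyadic block a single approximating multiplier $\nu^s$ whose $\ell^p\to\ell^p$ operator norm is controlled uniformly in the block, estimating $\frkY_{N_n}\Xi_{n,s}^\beta$ against it via the kernel bounds and \eqref{eq:170}, and paying only the polynomial-in-$\kappa_s$ loss from \eqref{eq:162} and \eqref{eq:32} — is where essentially all the work lies, and it is here that the choice of the exponents $\rho$, $\kappa_s \simeq (s+1)^{\rho/10}$, and $\beta$ must be balanced so that the $\ell^2$ gain $(s+1)^{-\delta\beta\rho}$ from the Gaussian sum bound dominates every loss after interpolation.
```
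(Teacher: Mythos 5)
Your outline follows the paper's own route essentially step for step: the long/short splitting \eqref{eq:90} with $N_n=\lfloor 2^{n^\rho}\rfloor$, short variations via $V_r\le V_1$ and \eqref{eq:28}, the Ionescu--Wainger decomposition of the long variations into a minor-arc piece (handled by Property \ref{pr:4} interpolated with the trivial bound \eqref{eq:72}) and the shells $\mathscr{R}_s^\beta$ with target $(s+1)^{-2}$ per $s$, the further split of the index set at $2^{\kappa_s}$, the Gaussian-sum gain $(s+1)^{-\delta\beta\rho}$ on $\ell^2$ from Theorem \ref{thm:2} interpolated with $\ell^p$ bounds that lose only powers of $\kappa_s$ and $\log(s+2)$, and, on small scales, Lemma \ref{lem:1} together with a per-block approximating multiplier (the paper's $\frkM_{J_{n_1}}\Pi^{\beta}_{n,s}$, resp. $\Omega_{n,s}^\beta$ on large scales, with transference of Property \ref{pr:1} via Proposition \ref{prop:4} and the sampling argument). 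So the approach is the same; what you give is a faithful roadmap in which the content of the paper's Theorems \ref{thm:3}, \ref{thm:6}, \ref{thm:4} and \ref{thm:7} is asserted rather than carried out, as you yourself acknowledge in your closing remark.

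One concrete misstep is worth fixing. In the short-variation estimate you dominate each block by $\sum_{N}|Y_{N+1}f-Y_Nf|$ and then ``raise to the $r$-th power and sum in $n$,'' concluding that $\rho<1-1/r$ (hence $\rho<1/2$) suffices for every $p$. But passing the $\ell^p$ norm inside the $\ell^r$-sum over $n$ by Minkowski requires $p\ge r$, which fails exactly in the interesting range $1<p<r$. The paper's Theorem \ref{thm:8} instead uses monotonicity of the $\ell^q$ norms to replace the exponent $r$ by $u=\min\{2,p\}\le p$ and only then applies Minkowski with exponent $u$; the summability condition becomes $u(1-\rho)>1$, i.e. $\rho<1-1/u$, which for $1<p<2$ forces $\rho$ to be small (tending to $0$ as $p\to1$), not merely $\rho<1/2$. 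Since the long-variation bounds (Theorem \ref{thm:5}) hold for every fixed $\rho\in(0,1)$, this only changes your choice of $\rho(p)$ and the argument is repaired by that substitution; also note that no allocation of the factor $\tfrac{r}{r-2}$ is needed in this term.
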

We split a variational seminorm into two parts \emph{long variations} $V_r^L$, and \emph{short variations} $V_r^S$,
where
\[
	V_r^L\big(Y_N f : N \in \NN\big) = V_r\big(Y_{N_n} f : n \in \NN_0\big),
\]
and
\[
	V_r^S\big(Y_N f : N \in \NN\big) = \Big(\sum_{n \geq 0}
	V_r\Big( Y_N f : N \in [N_n, N_{n+1})\Big)^r \Big)^{\frac{1}{r}},
\]
respectively. Then
\begin{equation}
	\label{eq:90}
	V_r\big(Y_N f : N \in \NN\big) \lesssim V^L_r\big(Y_N f : N \in \NN\big)+ V_r^S\big(Y_N f : N \in \NN\big).
\end{equation}
We first estimate $\ell^p$-norm of long variations.

\subsection{Long variations}
\label{sec:10}
Let $\beta \in \NN$ which value will be determined later. Take $\rho \in (0, 1)$ and
$0 < \chi < \tfrac{1}{10}\min\{1, c\}$ where $c$ is the constant from Lemma \ref{lem:3}. For each $n \in \NN$,
we define the multiplier
\[
	\Xi^\beta_n(\xi) = \sum_{a/q \in \mathscr{U}_{\lfloor n^\rho \rfloor}^\beta} \eta_n(\xi - a/q),
\]
where the sets $\scrU_{\lfloor n^\rho \rfloor}^\beta$ are given by \eqref{eq:91} and
\[
	\eta_n(\xi) = \eta\Big(2^{-\chi \sqrt{\log N_n}} N_n^A \xi \Big).
\]
We write
\begin{equation}
	\label{eq:54}
	\begin{aligned}
	\big\|V_r\big(Y_{N_n} f : n \in \NN\big) \big\|_{\ell^p}
	&=
	\Big\|V_r\Big(\sum_{j = 1}^n \calF^{-1}\big( (\frkY_{N_j} - \frkY_{N_{j-1}}) \hat{f} \big) : 
	n \in \NN
	\Big)
	\Big\|_{\ell^p} \\
	&\leq
	\Big\|V_r\Big(
	\sum_{j = 1}^n 
	\calF^{-1}\big((\frkY_{N_j}-\frkY_{N_{j-1}}) \Xi_j^\beta \hat{f} \big)
	: n \in \NN
	\Big)
	\Big\|_{\ell^p} \\
	&\phantom{\leq}+
	\Big\|V_r\Big(
	\sum_{j = 1}^n
	\calF^{-1}\big((\frkY_{N_j}-\frkY_{N_{j-1}}) (1- \Xi_j^\beta)\hat{f} \big) 
	: n \in \NN \Big)
	\Big\|_{\ell^p}.
	\end{aligned}
\end{equation}
We now separately estimate each term on the right-hand side of \eqref{eq:54}. We notice that in view of \eqref{eq:121}
and \eqref{eq:10}, we have
\begin{equation}
	\label{eq:72}
	\begin{aligned}
	\big\|
	\calF^{-1}\big((\frkY_{N_{n+1}}-\frkY_{N_n}) (1 - \Xi_{n+1}^{\beta}) \hat{f}\big) 
	\Big\|_{\ell^p}
	&\lesssim
	\|f\|_{\ell^p} + 
	\big\|\calF^{-1}\big(\Xi_{n+1}^{\beta} \hat{f}\big) \big\|_{\ell^p} \\
	&\lesssim
	\log (n+1) \|f\|_{\ell^p}.
	\end{aligned}
\end{equation}
In fact, for $p = 2$, we can gain some decay in $n$. Given $\alpha > 0$, we select $\beta_\alpha$ to be determined by
Property \ref{pr:4}. Let $\beta > d \beta_\alpha$. Take any $\xi \in \TT^d$. By Dirichlet's principle, for each
$\gamma \in \Gamma$, there are coprime integers $a_\gamma$ and $q_\gamma$, such that
$1 \leq a_\gamma \leq q_\gamma \leq N_n^{\abs{\gamma}} (\log N_n)^{-\beta/d}$, and
\[
	\bigg|
	\xi_\gamma - \frac{a_\gamma}{q_\gamma}
	\bigg|
	\leq
	\frac{1}{q_\gamma} N_n^{-\abs{\gamma}} (\log N_n)^{\beta/d}.
\]
Suppose that $1 \leq q_\gamma \leq (\log N_n)^{\beta/d}$, for all $\gamma \in \Gamma$. We set
$q' = \lcm\big(q_\gamma : \gamma \in \Gamma \big)$ and $a'_\gamma = a_\gamma q'/q_\gamma$. Observe that for all
$\gamma \in \Gamma$, we have
\[
	\bigg|
	\xi_\gamma - \frac{a_\gamma}{q_\gamma}
  	\bigg|
	=
	\bigg|\xi_\gamma - \frac{a'_\gamma}{q'} \bigg| \leq N_n^{-\abs{\gamma}} (\log N_n)^{\beta/d}
	\leq
	\frac{1}{32 d} N_n^{-\abs{\gamma}} \cdot 2^{\chi \sqrt{\log N_n} },
\]
provided that
\[
	32 d (\log N_n)^{\beta/d} \leq 2^{\chi \sqrt{\log N_n}},
\]
which excludes only a finite number of $n$'s depending on $\beta$ and $\rho$. In particular, 
$\eta_{n}(\xi - a'/q') = 1$. Since $1 \leq q' \leq (\log N_n)^\beta$, $a' \in \bfA_{q'}$, we conclude that
$\Xi_{n+1}^{\beta}(\xi) = 1$. Hence, the condition $\Xi_{n+1}^{\beta}(\xi) < 1$ implies that
$(\log N_n)^{\beta/d} \leq q_\gamma \leq N_n^{\abs{\gamma}} (\log N_n)^{-\beta/d}$ for some $\gamma \in \Gamma$. Now,
by Property \ref{pr:4}, we obtain
\[
	\big| \frkY_{N_{n+1}}(\xi) - \frkY_{N_n}(\xi) \big| 
	\lesssim (\log N_n)^{-\alpha},
\]
which entails that
\begin{equation}
	\label{eq:73}
	\big\|
	\calF^{-1}\big((\frkY_{N_{n+1}}-\frkY_{N_n}) (1 - \Xi_{n+1}^{\beta}) \hat{f}\big) 
	\big\|_{\ell^2}
	\lesssim
	(\log N_n)^{-\alpha} \|f\|_{\ell^2}.
\end{equation}
Interpolation between \eqref{eq:72} and \eqref{eq:73}, shows that for each $p \in (1, \infty)$ and $\alpha > 0$
there is $\beta_{p, \alpha} > 0$ such that for all $\beta > \beta_{p, \alpha}$ and $n \in \NN$, we have
\[
	\big\|
	\calF^{-1}\big((\frkY_{N_{n+1}}-\frkY_{N_n}) (1 - \Xi_{n+1}^{\beta}) \hat{f}\big)
   	\big\|_{\ell^p}
	\leq
	C_{\alpha} (\log N_n)^{-\alpha} \|f\|_{\ell^p}.
\]
Taking $\beta > \beta_{p, 2 \rho^{-1}}$, we get
\begin{equation}
	\label{eq:56}
	\begin{aligned}
	\Big\|V_r\Big(
	\sum_{j = 1}^n
	\calF^{-1}\big((\frkY_{N_j} - \frkY_{N_{j-1}}) (1-\Xi_j^{\beta}) \hat{f} \big) 
	: n \in \NN\Big)\Big\|_{\ell^p}
	&\lesssim
	\sum_{n \geq 1} \big\|\calF^{-1}
	\big((\frkY_{N_n}-\frkY_{N_{n-1}})(1 - \Xi_n^{\beta}) \hat{f}\big) \big\|_{\ell^p} \\
	&\lesssim
	\Big(\sum_{n \geq 1} n^{-2}\Big)
	\|f\|_{\ell^p}.
	\end{aligned}
\end{equation}
We now turn to bounding the first term on the right-hand side of \eqref{eq:54}. For each $n \in \NN$ and
$s \in \{0, \ldots, n-1\}$ let us define the multiplier
\[
	\Xi_{n, s}^{\beta}(\xi) = \sum_{a/q \in \mathscr{R}_s^\beta} \eta_{n}(\xi - a/q),
\]
where $\mathscr{R}_s^\beta =\mathscr{U}_{\lfloor (s+1)^\rho \rfloor}^\beta \setminus 
\mathscr{U}_{\lfloor s^\rho \rfloor}^\beta$. By the triangle inequality we can write
\begin{equation}
	\label{eq:45}
	\begin{aligned}
	&\Big\|V_r\Big(
	\sum_{j = 1}^n
	\sum_{s = 0}^{j-1} 
	\calF^{-1}\big((\frkY_{N_j} - \frkY_{N_{j-1}}) \Xi_{j, s}^{\beta} \hat{f} \big) 
	: n \in \NN \Big)\Big\|_{\ell^p} \\
	&\qquad\qquad\leq
	\sum_{s = 0}^\infty
	\Big\|V_r\Big(\sum_{j = s+1}^n 
	\calF^{-1}\big((\frkY_{N_j} - \frkY_{N_{j-1}}) \Xi_{j, s}^{\beta} \hat{f} \big) : 
	s < n \Big)\Big\|_{\ell^p}.
	\end{aligned}
\end{equation}
Thus, the aim is to show that for each $\beta \in \NN$, $p \in (1, \infty)$, $s \in \NN_0$, and $r \in (2, \infty)$,
\[
	\Big\|V_r\Big(
	\sum_{j = s+1}^n
	\calF^{-1}\big((\frkY_{N_j}-\frkY_{N_{j-1}}) \Xi_{j, s}^{\beta} \hat{f} \big)
	: s < n \Big)\Big\|_{\ell^p}
	\lesssim
	(s+1)^{-2} \|f\|_{\ell^p}.
\]
We split the variational seminorm into two parts: $s < n \leq 2^{\kappa_s}$ and $2^{\kappa_s} < n$, where
\[
	\kappa_s = 20 d \big(\lfloor \rho^{-1} (s+1)^{\rho/10} \rfloor+1\big).
\]
We begin with $p = 2$ and $s < n \leq 2^{\kappa_s}$.
\begin{theorem}
	\label{thm:3}
	For each $\beta \in \NN$ there is $C > 0$ such that for all $s \in \NN_0$, $r \in (2, \infty)$ and any finitely
	supported function $f: \ZZ^d \rightarrow \CC$, we have
	\[
		\Big\|
		V_r\Big( 
		\sum_{j = s+1}^n
		\calF^{-1}\big((\frkY_{N_j} - \frkY_{N_{j-1}}) \Xi_{j, s}^{\beta} \hat{f} \big) 
		: s < n \leq 2^{\kappa_s}\Big)
		\Big\|_{\ell^2}
		\leq
		C
		(s+1)^{-\delta \beta \rho +2}
		\|f\|_{\ell^2},
	\]
	where $\delta$ is determined in Theorem \ref{thm:2}.
\end{theorem}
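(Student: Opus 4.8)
The plan is to pass, via the numerical inequality \eqref{eq:32}, to an $\ell^2$ square function over dyadic blocks of scales, on each of which the multiplier $\frkY_{N_j}-\frkY_{N_{j-1}}$ is replaced by its main term from \eqref{eq:29}, so as to harvest the Gaussian sum decay of Theorem~\ref{thm:2}. Fix $\beta\in\NN$ and, for a large $\alpha$ to be chosen, put
\[
	\nu_{j,s}(\xi)=\sum_{a/q\in\mathscr{R}_s^\beta}G(a/q)\big(\Upsilon_{N_j}(\xi-a/q)-\Upsilon_{N_{j-1}}(\xi-a/q)\big)\eta_j(\xi-a/q),
	\qquad
	\mathcal{E}_{j,s}=\big(\frkY_{N_j}-\frkY_{N_{j-1}}\big)\Xi_{j,s}^\beta-\nu_{j,s}.
\]
For $a/q\in\mathscr{R}_s^\beta$ one has $a\in\bfA_q$ and $(s+1)^{\rho\beta}\lesssim q\le\exp\big((s+1)^{\rho/10}\big)$. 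The upper bound on $q$, the inequalities $\chi<\tfrac1{10}\min\{1,c\}$ and $\rho/2>\rho/10$, and $j>s$ guarantee that for all $s$ large and every $s<j\le2^{\kappa_s}$ the hypotheses of \eqref{eq:29} hold on $\supp\eta_j(\cdot-a/q)$ with the choice $L=\max\{q,2^{\chi\sqrt{\log N_j}}\}$, whence $\|\mathcal{E}_{j,s}\|_{L^\infty(\TT^d)}\lesssim(\log N_{j-1})^{-\alpha}\lesssim j^{-\rho\alpha}$. I will also use that the cubes $\supp\eta_j(\cdot-a/q)$, $a/q\in\mathscr{R}_s^\beta$, are pairwise disjoint for $s$ large (distinct such rationals differ by $\ge\exp(-2(s+1)^{\rho/10})$ in some coordinate, while each cube has half-side $\le\tfrac1{16d}2^{\chi\sqrt{\log N_j}}N_j^{-1}$) and that $0\le\Xi_{j,s}^\beta\le1$. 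The boundedly many remaining $s$ are absorbed into $C$ via the crude bound from \eqref{eq:121} and \eqref{eq:10}.

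The error part is disposed of at once: since $V_r\le V_1$, Minkowski's inequality and Plancherel give
\[
	\Big\|V_r\Big(\sum_{j=s+1}^n\calF^{-1}\big(\mathcal{E}_{j,s}\hat f\big):s<n\le2^{\kappa_s}\Big)\Big\|_{\ell^2}
	\le\sum_{j>s}\big\|\mathcal{E}_{j,s}\big\|_{L^\infty}\|f\|_{\ell^2}
	\lesssim(s+1)^{1-\rho\alpha}\|f\|_{\ell^2},
\]
so it suffices to take $\rho\alpha>1$ with $1-\rho\alpha\le-\delta\beta\rho+2$. For the main part I embed $\{s+1,\dots,2^{\kappa_s}\}$ into $\{0,\dots,2^{\kappa_s}\}$, extend the partial sums of $\calF^{-1}(\nu_{j,s}\hat f)$ by $0$ below $s$, and apply Lemma~\ref{lem:1}; it then remains to prove that for each $0\le i\le\kappa_s$,
\[
	\Big\|\Big(\sum_l\Big|\sum_{j\in J_l}\calF^{-1}\big(\nu_{j,s}\hat f\big)\Big|^2\Big)^{1/2}\Big\|_{\ell^2}\lesssim(s+1)^{-\delta\beta\rho}\|f\|_{\ell^2},
	\qquad J_l=\big(\max(l2^i,s),(l+1)2^i\big],
\]
since summing over the $\kappa_s+1\lesssim(s+1)^{\rho/10}$ values of $i$ then yields $\lesssim(s+1)^{-\delta\beta\rho+1}\|f\|_{\ell^2}$. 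By Plancherel, the disjointness of the cubes, and $|G(a/q)|^2\lesssim q^{-2\delta}\lesssim(s+1)^{-2\delta\beta\rho}$, the square of the left-hand side is bounded by $C(s+1)^{-2\delta\beta\rho}\|f\|_{\ell^2}^2$ as soon as one has the uniform estimate
\[
	\sup_{\zeta\in\RR^d}\,\sup_{i\ge0}\,\sum_l\Big|\sum_{j\in J_l}\big(\Upsilon_{N_j}(\zeta)-\Upsilon_{N_{j-1}}(\zeta)\big)\eta_j(\zeta)\Big|^2\lesssim1.
\]

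This uniform claim is the heart of the matter. Summation by parts on a block $J_l=(m_0,m_1]$ gives
\[
	\sum_{j\in J_l}\big(\Upsilon_{N_j}(\zeta)-\Upsilon_{N_{j-1}}(\zeta)\big)\eta_j(\zeta)
	=\big(\Upsilon_{N_{m_1}}(\zeta)-\Upsilon_{N_{m_0}}(\zeta)\big)\eta_{m_1}(\zeta)
	+\sum_{j=m_0+1}^{m_1-1}\big(\Upsilon_{N_j}(\zeta)-\Upsilon_{N_{m_0}}(\zeta)\big)\big(\eta_j(\zeta)-\eta_{j+1}(\zeta)\big).
\]
Since the nonempty blocks form a chain whose consecutive endpoints coincide, the first terms contribute $\sum_k|\Upsilon_{n_k}(\zeta)-\Upsilon_{n_{k-1}}(\zeta)|^2$ for the increasing integer sequence $(n_k)$ of block endpoints, which is $\lesssim1$ uniformly in $\zeta$ and in $(n_k)$ by Proposition~\ref{prop:4} at $p=2$ together with Plancherel. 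For the second terms I use the uniform bound $\sup_N\|\Upsilon_N\|_{L^\infty(\RR^d)}\lesssim1$ (trivial for $\Phi_N$; for $\Psi_N$ obtained by summing \eqref{eq:25} over dyadic scales), which reduces their contribution to $\big(\sum_{j\ge1}|\eta_j(\zeta)-\eta_{j+1}(\zeta)|\big)^2$, the square of the total variation of $j\mapsto\eta_j(\zeta)$.

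It remains to bound that total variation by $O(1)$ uniformly in $\zeta$. Writing $w_j=2^{-\chi\sqrt{\log N_j}}N_j^A\zeta$, smoothness of $\eta$ gives $|\eta_j(\zeta)-\eta_{j+1}(\zeta)|\le\|\nabla\eta\|_\infty\|w_{j+1}-w_j\|_\infty$; moreover $N_{j+1}/N_j\le2^{\rho j^{\rho-1}}$ and $\sqrt{\log N_{j+1}}-\sqrt{\log N_j}=O(j^{\rho/2-1})=o(j^{\rho-1})$, so the increasing quantity $\|w_j\|_\infty$ satisfies $1+c_0 j^{\rho-1}\le\|w_{j+1}\|_\infty/\|w_j\|_\infty\le1+Cj^{\rho-1}$ for $j$ large. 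Consequently only $O(j_*^{1-\rho})$ indices—where $j_*$ is the scale with $\|w_{j_*}\|_\infty\simeq1$—fall in the annulus $\{\tfrac1{32d}\le\|x\|_\infty\le\tfrac1{16d}\}$ on which $\eta$ is non-constant, each contributing $|\eta_j(\zeta)-\eta_{j+1}(\zeta)|\lesssim j_*^{\rho-1}$, and the boundedly many small $j$ contribute $O(1)$; the total is $O(1)$. Assembling the main and error estimates via the triangle inequality for $V_r$ finishes the proof. The two places I expect to need care are the verification that \eqref{eq:29} applies throughout $s<j\le2^{\kappa_s}$—which is exactly where $q\le\exp((s+1)^{\rho/10})$, $\chi<\tfrac1{10}c$, and $\kappa_s\simeq(s+1)^{\rho/10}$ enter—and the total-variation estimate; both hinge on the sub-geometric spacing of the scales $N_n=\lfloor2^{n^\rho}\rfloor$.
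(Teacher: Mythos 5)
Your reduction coincides with the paper's up to the point where the error $(\frkY_{N_j}-\frkY_{N_{j-1}})\Xi^{\beta}_{j,s}-\nu_{j,s}$ is disposed of via \eqref{eq:29} and Lemma \ref{lem:1} is invoked; after that you take a genuinely different route. The paper factors $\nu_{j,s}\hat f=\Theta^{\beta}_{j,s}\hat F$ with $\hat F=\sum_{a/q\in\mathscr{R}_s^\beta}G(a/q)\eta_s(\cdot-a/q)\hat f$, bounds the dyadic square functions by transferring the continuous square-function estimate (Proposition \ref{prop:4} plus the $2^{-\chi\sqrt{\log N_m}/d}$ decay handling $\eta_m-1$) through the vector-valued Ionescu--Wainger inequality \eqref{eq:10}, and only at the end extracts the Gauss-sum decay through $\|F\|_{\ell^2}$, at the cost of $\kappa_s\log(s+2)$. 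You instead stay entirely on the multiplier side: Plancherel plus disjointness of the cubes reduces each dyadic square function to the scalar uniform bound $\sup_\zeta\sup_i\sum_l\big|\sum_{j\in J_l}(\Upsilon_{N_j}(\zeta)-\Upsilon_{N_{j-1}}(\zeta))\eta_j(\zeta)\big|^2\lesssim1$, which you prove by Abel summation, the pointwise ($p=2$, Plancherel) consequence of Proposition \ref{prop:4} applied to the chain of block endpoints, and an $O(1)$ total-variation bound for $j\mapsto\eta_j(\zeta)$ coming from the sub-geometric spacing of $N_n=\lfloor 2^{n^\rho}\rfloor$. This avoids \eqref{eq:10} altogether at $p=2$ (it remains indispensable in the $\ell^p$ companion, Theorem \ref{thm:6}) and yields the slightly better loss $\kappa_s\simeq(s+1)^{\rho/10}$ in place of $\kappa_s\log(s+2)$ --- either way comfortably within the stated $(s+1)^{-\delta\beta\rho+2}$. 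Your bookkeeping (applicability of \eqref{eq:29} with $L=\max\{q,2^{\chi\sqrt{\log N_j}}\}$ for $j>s$ and $s$ large, disjointness of the cubes, $q\gtrsim s^{\rho\beta}$ on $\mathscr{R}_s^\beta$, absorption of the finitely many small $s$) is correct.

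One justification needs repair. You assert $\sup_N\|\Upsilon_N\|_{L^\infty}\lesssim1$, and in the singular-integral case $\Upsilon_N=\Psi_N$ this does not follow from ``summing \eqref{eq:25} over dyadic scales'': that only controls differences between scales, while the hypotheses \eqref{eq:23}--\eqref{eq:24} give no size control of $K$ inside the unit ball, so $\Psi_N$ itself need not be uniformly bounded (the paper never needs it; it only ever meets differences). Fortunately the quantity your Abel summation actually produces is a difference, $\Upsilon_{N_j}-\Upsilon_{N_{m_0}}$, and the bound $\sup_{M<N}\|\Upsilon_N-\Upsilon_M\|_{L^\infty}\lesssim1$ does follow by exactly the dyadic telescoping of \eqref{eq:27}/\eqref{eq:25} you describe, since the intermediate shells stay away from the origin. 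With that wording corrected, your argument goes through.
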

\begin{proof}
	First, let us see that for each $m > s$, supports of functions $\eta_{m}(\cdot - a/q)$ are disjoint while
	$a/q$ varies over $\mathscr{R}_s^\beta$. Indeed, otherwise there would be $a/q, a'/q' \in \mathscr{R}_s^\beta$,
	$a'/q' \neq a/q$ and $\xi \in \TT^d$, such that $\eta_{m}(\xi - a/q) > 0$ and $\eta_{m}(\xi - a'/q') > 0$. Hence,
	\begin{align*}
		e^{-2 (s+1)^{\rho/10}}
		\leq 
		\frac{1}{q q'} \leq \bigg|\frac{a}{q} - \frac{a'}{q'} \bigg| \leq \bigg|\xi_\gamma - \frac{a}{q} \bigg|
		+
		\bigg|\xi_\gamma - \frac{a_\gamma'}{q'}\bigg| \leq 2^{-m^\rho \abs{\gamma}} 2^{\chi m^{\rho/2}},
	\end{align*}
	which is impossible. 

	Next, we consider the following multiplier
	\[
		\Lambda_{n, s}^\beta(\xi) 
		= \sum_{a/q \in \mathscr{R}_s^\beta} G(a/q) \big(\Upsilon_{N_n}(\xi-a/q) - \Upsilon_{N_{n-1}}(\xi-a/q)\big)
		\eta_n(\xi - a/q).
	\]
	Let us see that $\Lambda_{n, s}^\beta$ is sufficiently close to $(\frkY_{N_n} - \frkY_{N_{n-1}}) \Xi_{j, s}^{\beta}$.
	For each $a/q \in \mathscr{R}^\beta_s$, we have $q \leq \exp\big(\tfrac{c}{2} \sqrt{\log N_n} \big)$, thus by 
	\eqref{eq:29}, on the support of $\eta_n(\cdot - a/q)$ we can write
    \begin{align*}
        (\frkY_{N_n}(\xi) - \frkY_{N_{n-1}} (\xi) )
        &=
        G(a/q) \big(\Upsilon_{N_n}(\xi-a/q) - \Upsilon_{N_{n-1}}(\xi-a/q)\big) \\
        &\phantom{=}+
        \calO\big((\log N_n)^{-\rho^{-1} - \beta\delta}\big).
    \end{align*}
	Therefore, 
	\[
		\Big\|
		\calF^{-1}\Big(\big((\frkY_{N_n} - \frkY_{N_{n-1}}) \Xi_{n, s}^\beta - \Lambda_{n, s}^\beta 
		\big)\hat{f} \Big)
		\Big\|_{\ell^2}
		\leq
		C n^{-1-\beta\delta\rho} \|f\|_{\ell^2},
	\]
	and hence,
	\begin{equation}
		\label{eq:47}
		\begin{aligned}
		&
		\Big\|
		V_r\Big(\sum_{j = s+1}^n 
		\calF^{-1}\Big(\big((\frkY_{N_j} - \frkY_{N_{j-1}}) \Xi_{j, s}^\beta
        \big)\hat{f}\Big) 
		: s < n \leq 2^{\kappa_s}
		\Big)
        \Big\|_{\ell^2} \\
        &\qquad\qquad\leq
		\Big\|
		V_r\Big(\sum_{j = s+1}^n
        \calF^{-1}\big(\Lambda_{j, s}^\beta \hat{f} \big) : s < n \leq 2^{\kappa_s} \Big)
        \Big\|_{\ell^2}
		+
		\Big(\sum_{n = s+1}^\infty n^{-1 - \beta \delta \rho}\Big) \|f\|_{\ell^2}.
		\end{aligned}
	\end{equation}
	Therefore, our task is reduced to showing boundedness of the first term on the right-hand side of \eqref{eq:47}.
	Observe that for $n > s$, $\eta_n = \eta_n \eta_s$, thus we can write
	\begin{align*}
		\Lambda_{n, s}^\beta \hat{f}
		=
		\Theta_{n, s}^\beta \hat{F},
	\end{align*}
	where
	\[
		\Theta_{n, s}^\beta(\xi) = \sum_{a/q \in \mathscr{R}_s^\beta} 
		\big(\Upsilon_{N_n}(\xi-a/q) - \Upsilon_{N_{n-1}}(\xi-a/q)\big) \eta_n(\xi - a/q),
	\]
	and
	\[
		\hat{F}(\xi) = \sum_{a/q \in \mathscr{R}_s^\beta} G(a/q) \eta_s(\xi -a/q) \hat{f}(\xi).
	\]
	Now, in view of Lemma \ref{lem:1},
	\begin{equation}
		\label{eq:36}
		\Big\| V_r\Big(
		\sum_{j = s+1}^n
		\calF^{-1}\big( \Theta_{j, s}^\beta \hat{f} \big)
		: s < n \leq 2^{\kappa_s} \Big)
		\Big\|_{\ell^2}
		\leq
		\sqrt{2}
		\sum_{i = 0}^{\kappa_s}
		\Big\|\Big(
		\sum_{j=0}^{2^{\kappa_s-i}-1}
		\big|
		\sum_{m \in I_j^i+s+1}
		\calF^{-1}\big(\Theta_{m, s}^\beta \hat{F}\big)
		\big|^2
		\Big)^{1/2}
		\Big\|_{\ell^2},
	\end{equation}
	where $I^i_j = \big\{j 2^i, j2^i+1, \ldots, (j+1)2^i-1\big\}$. Let us consider a fixed $i \in \{0, \ldots, \kappa_s\}$.
	To bound the norm of the square function on the right-hand side of \eqref{eq:36}, we first study its continuous
	counterpart, that is
	\[
		\Big(
		\sum_{j=0}^{2^{\kappa_s-i}-1}
        \big|
        \sum_{m \in I_j^i+s+1}
        \calF^{-1}\big((\Upsilon_{N_m} - \Upsilon_{N_{m-1}}) \eta_m \hat{f}\big)
        \big|^2
		\Big)^{1/2}.
	\]
	If $\eta_m(\xi) < 1$ then
	\[
		\big|\xi_\gamma\big| \geq \frac{1}{32d} N_m^{-\abs{\gamma}} 2^{\chi \sqrt{\log N_m}},
		\qquad\text{for some }\gamma \in \Gamma,
	\]
	thus by Property \ref{pr:2},
	\[
		\big|\Upsilon_{N_m}(\xi) - \Upsilon_{N_{m-1}}(\xi) \big| \lesssim \abs{N_m^A \xi}^{-1/d} 
		\lesssim 2^{-\chi\sqrt{\log N_m}/d}.
	\]
	Therefore,
	\begin{align*}
		&
		\Big\|
		\Big(
        \sum_{j=0}^{2^{\kappa_s-i}-1}
        \Big|
        \sum_{m \in I_j^i+s+1}
        \calF^{-1}\big((\Upsilon_{N_m} - \Upsilon_{N_{m-1}}) (\eta_m - 1) \calF f\big)
        \Big|^2
        \Big)^{1/2}
		\Big\|_{L^2}\\
		&\qquad\qquad\leq
		\sum_{m = s}^{2^{\kappa_s}} 
		\Big\|
		\calF^{-1}\big((\Upsilon_{N_m} - \Upsilon_{N_{m-1}}) (\eta_m - 1) \calF f\big) 
		\Big\|_{L^2}
		\\
		&\qquad\qquad\lesssim
		\Big(\sum_{m \geq 1} 2^{-\chi\sqrt{\log N_m}/d} \Big) \|f\|_{L^2}.
	\end{align*}
	Now, by Proposition \ref{prop:4}, we have
	\[
		\Big\|
        \Big(
        \sum_{j=0}^{2^{\kappa_s-i}-1}
        \Big|
        \sum_{m \in I_j^i+s+1}
        \calF^{-1}\big((\Upsilon_{N_m} - \Upsilon_{N_{m-1}}) \calF f\big)
        \Big|^2
        \Big)^{1/2}
		\Big\|_{L^2}
		\lesssim
		\|f\|_{L^2},
	\]
	thus, in view of \eqref{eq:10}, we conclude that
	\[
		\bigg\|\bigg(
        \sum_{j=0}^{2^{\kappa_s-i}-1}
        \Big|
        \sum_{m \in I_j^i+s+1}
        \calF^{-1}\big(\Theta_{m, s}^\beta \hat{F}\big)
        \Big|^2
        \bigg)^{1/2}
        \bigg\|_{\ell^2}
		\lesssim
		\log (s+2) \|F\|_{\ell^2}.
	\]
	Therefore, by \eqref{eq:36}, we arrive at the 
	\begin{equation}
		\label{eq:44}
		\Big\| V_r\Big(
        \sum_{j = s+1}^n
        \calF^{-1}\big( \Theta_{j, s}^\beta \hat{f} \big)
        : s < n \leq 2^{\kappa_s} \Big)
        \Big\|_{\ell^2}
		\lesssim
		\kappa_s \log (s+2) \|F\|_{\ell^2}.
	\end{equation}
	Finally, by Plancherel's theorem
	\[
		\big\|F\big\|_{\ell^2}^2 = 
		\sum_{a/q \in \mathscr{R}_s^\beta} \abs{G(a/q)}^2
		\int_{\TT^d} \eta_s(\xi - a/q)^2 \big|\hat{f}(\xi)\big|^2{\: \rm d} \xi,
	\]
	and hence, by Theorem \ref{thm:2},
	\[
		\big\|F\big\|_{\ell^2} \lesssim (s+1)^{-\beta \delta \rho} \|f\|_{\ell^2},
	\]
	which together with \eqref{eq:44} and \eqref{eq:47} concludes the proof.
\end{proof}
\begin{theorem}
	\label{thm:6}
	For each $\beta \in \NN$ and $p \in (1, \infty)$ there is $C > 0$, such that for all $s \in \NN_0$,
	$r \in (2, \infty)$, and any finitely supported function $f: \ZZ^d \rightarrow \CC$, we have
	\[
		\Big\|
		V_r\Big(
		\sum_{j = s+1}^n 
		\calF^{-1}\big((\frkY_{N_j} - \frkY_{N_{j-1}}) \Xi^{\beta}_{j, s} \hat{f} \big) 
		: s < n \leq 2^{\kappa_s}
		\Big)
		\Big\|_{\ell^p}
		\leq
		C
		(s+1) \log (s+2)
		\|f \|_{\ell^p}.
	\]
\end{theorem}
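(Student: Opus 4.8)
The plan is to cut the small--scale range $s<n\le 2^{\kappa_s}$ into the $O(\kappa_s)$ dyadic blocks $\mathcal{J}_l=[2^l,2^{l+1})\cap(s,2^{\kappa_s}]$ and to control the $r$--variation on each block separately. Writing $a_n=\sum_{j=s+1}^n\calF^{-1}\big((\frkY_{N_j}-\frkY_{N_{j-1}})\Xi_{j,s}^\beta\hat f\big)$, applying \eqref{eq:162} with $K\simeq\kappa_s$, and then bounding the resulting $\ell^r$--sum over blocks by the corresponding $\ell^1$--sum and using the triangle inequality for $\|\cdot\|_{\ell^p}$, we obtain
\[
\big\|V_r(a_n:s<n\le 2^{\kappa_s})\big\|_{\ell^p}\lesssim\kappa_s^2\max_l\big\|V_r(a_n-a_{u_l}:n\in\mathcal{J}_l)\big\|_{\ell^p},
\]
where $u_l=\min\mathcal{J}_l$. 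Since $\kappa_s\simeq(s+1)^{\rho/10}$, it suffices to show that each block contributes $\lesssim\log(s+2)\|f\|_{\ell^p}$, with the implied constant independent of $s$, $l$, and $r$.

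Fix a block $\mathcal{J}_l$. First I would replace $\frkY_{N_j}-\frkY_{N_{j-1}}$ by its archimedean model. For $a/q\in\mathscr{R}_s^\beta$ we have $q\le e^{(s+1)^{\rho/10}}$, and since $j>s$ forces, for all but finitely many $s$, both $q\le L$ and $L\le\exp(c\sqrt{\log N_j})(\log N_j)^{-\alpha}$ with $L\simeq 2^{\chi\sqrt{\log N_j}}$, Property~\ref{pr:6} gives on the support of $\eta_j(\cdot-a/q)$
\[
\frkY_{N_j}(\xi)-\frkY_{N_{j-1}}(\xi)=G(a/q)\big(\Upsilon_{N_j}(\xi-a/q)-\Upsilon_{N_{j-1}}(\xi-a/q)\big)+\calO\big((\log N_j)^{-\alpha}\big),
\]
with $\alpha$ chosen so that $\alpha\rho>1$. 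The resulting error multiplier is supported on the disjoint cubes of $\Xi_{j,s}^\beta$, so the scalar Ionescu--Wainger bound (\eqref{eq:10} for a single multiplier) controls its $\ell^p\to\ell^p$ norm by $(\log N_j)^{-\alpha}\log(s+2)$; bounding $V_r\le V_1$ and summing over $j\in\mathcal{J}_l$, the whole error contributes $\lesssim\log(s+2)\big(\sum_{j>s}(\log N_j)^{-\alpha}\big)\|f\|_{\ell^p}\lesssim\log(s+2)\|f\|_{\ell^p}$. Hence it remains to bound the $r$--variation of $\sum_{j=u_l}^n\calF^{-1}(\Lambda_{j,s}^\beta\hat f)$ over $\mathcal{J}_l$, with $\Lambda_{j,s}^\beta$ as in Theorem~\ref{thm:3}.

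For this principal term I would follow the proof of Theorem~\ref{thm:3}, using \eqref{eq:10} in place of Plancherel. Choose the diagonal matrix $\mathcal{E}$ with entries $\epsilon_\gamma=\exp\big(-\lfloor(s+1)^\rho\rfloor^{1/5}\big)$; because $n\le 2^{\kappa_s}$ (so $\log N_n$ is not exponentially large) and $n>s$, one checks that $\eta_j=\eta_j\cdot\eta(\mathcal{E}^{-1}\cdot)$ for every $j$ in the small--scale range, for all but finitely many $s$. Setting $\hat F(\xi)=\sum_{a/q\in\mathscr{R}_s^\beta}G(a/q)\,\eta(\mathcal{E}^{-1}(\xi-a/q))\hat f(\xi)$ and using the disjointness of the cubes, we get $\calF^{-1}(\Lambda_{j,s}^\beta\hat f)=\calF^{-1}(\Theta_{j,s}^\beta\hat F)$, while \eqref{eq:10} applied to the constant multiplier $1$ (it tolerates the bounded sampling coefficients $G(a/q)$, $|G(a/q)|\le 1$) yields $\|F\|_{\ell^p}\lesssim\log(s+2)\|f\|_{\ell^p}$. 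Lemma~\ref{lem:1} on $\mathcal{J}_l$ (which has at most $2^{\kappa_s}$ points, hence at most $\kappa_s+1$ dyadic scales) then gives
\[
\Big\|V_r\Big(\sum_{j=u_l}^n\calF^{-1}(\Theta_{j,s}^\beta\hat F):n\in\mathcal{J}_l\Big)\Big\|_{\ell^p}\lesssim\sum_{i=0}^{\kappa_s}\Big\|\Big(\sum_j\big|\sum_{m\in I_j^i}\calF^{-1}(\Theta_{m,s}^\beta\hat F)\big|^2\Big)^{1/2}\Big\|_{\ell^p}.
\]
Each inner square function is the Ionescu--Wainger discretization of the continuous family $\big(\sum_{m\in I_j^i}(\Upsilon_{N_m}-\Upsilon_{N_{m-1}})\eta_m\big)_j$; writing $\eta_m=1-(1-\eta_m)$, the $1$--part telescopes over $I_j^i$ and Proposition~\ref{prop:4} bounds the corresponding $L^r$ square function uniformly in $i$, whereas the $(1-\eta_m)$--part is treated as in Theorem~\ref{thm:3} via Property~\ref{pr:2}, now through the estimate $\big\|\calF^{-1}\big((\Upsilon_{N_m}-\Upsilon_{N_{m-1}})(1-\eta_m)\calF g\big)\big\|_{L^r}\lesssim 2^{-c\sqrt{\log N_m}}\|g\|_{L^r}$ (an anisotropic Littlewood--Paley decomposition), which is summable in $m$. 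Thus \eqref{eq:10} bounds each term by $\lesssim\log(s+2)\|F\|_{\ell^p}$; summing the $\lesssim\kappa_s$ values of $i$ and recalling $\|F\|_{\ell^p}\lesssim\log(s+2)\|f\|_{\ell^p}$ gives the block bound $\lesssim\kappa_s\log^2(s+2)\|f\|_{\ell^p}$. Together with the $\kappa_s^2$ from the block assembly this is $\lesssim(s+1)^{3\rho/10}\log^2(s+2)\|f\|_{\ell^p}\lesssim(s+1)\log(s+2)\|f\|_{\ell^p}$.

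The main difficulty, relative to the $\ell^2$ estimate of Theorem~\ref{thm:3}, is the loss of Plancherel: the decay $(s+1)^{-\delta\beta\rho}$ coming from $|G(a/q)|\lesssim q^{-\delta}$ (Theorem~\ref{thm:2}) is no longer available, so everything is routed through the Ionescu--Wainger projection inequality \eqref{eq:10}, at the benign cost of factors $\log(s+2)$. The two delicate points are (i) forcing all the anisotropic cutoffs $\eta_j$ occurring in a block to sit inside one fixed Ionescu--Wainger cutoff $\eta(\mathcal{E}^{-1}\cdot)$ --- this is precisely what the restriction $n\le 2^{\kappa_s}$ together with the dyadic--block subdivision buys --- and (ii) upgrading the trivial $\|\cdot\|_\infty$--bound on the tail $(\Upsilon_{N_m}-\Upsilon_{N_{m-1}})(1-\eta_m)$ to an $L^r$--bound, so that the correction terms stay summable in every $L^r$.
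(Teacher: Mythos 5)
Your overall architecture (dyadic blocks via \eqref{eq:162}, replacing $\frkY_{N_j}-\frkY_{N_{j-1}}$ by its archimedean model, then Lemma \ref{lem:1} plus \eqref{eq:10} and Proposition \ref{prop:4}) parallels the paper, but it hides the two points that are actually the crux of the $\ell^p$ estimate, and as written both are gaps. First, you claim that \eqref{eq:10} ``tolerates the bounded sampling coefficients $G(a/q)$'' so that $\|F\|_{\ell^p}\lesssim\log(s+2)\|f\|_{\ell^p}$ for $\hat F(\xi)=\sum_{a/q\in\mathscr{R}_s^\beta}G(a/q)\eta(\mathcal{E}^{-1}(\xi-a/q))\hat f(\xi)$. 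Inequality \eqref{eq:10} is proved only for multipliers of the form $\sum_{a/q}\eta(\mathcal{E}_n^{-1}(\xi-a/q))\Theta_j(\xi-a/q)$, i.e.\ the \emph{same} profile translated to every rational, with no $a/q$-dependent weights; for $p\neq 2$ there is no general principle allowing arbitrary bounded coefficients, and handling the Gauss-sum weights on $\ell^p$ is exactly what the paper works for. Its device is to replace $G(a/q)$ by an honest operator: in Claim \ref{clm:5} the multiplier $(\frkY_{N_n}-\frkY_{N_{n-1}})\Xi_{n,s}^\beta$ is compared with $\frkM_{J_{n_1}}\Pi_{n,s}^\beta$, where $J_{n_1}$ is a fixed scale per dyadic block and $\Pi_{n,s}^\beta$ carries \emph{no} Gauss sums (so Claim \ref{clm:1} can be run through \eqref{eq:10} and Proposition \ref{prop:4} legitimately); the analogous step for large scales is \eqref{eq:70} in Theorem \ref{thm:7}, again via $\frkM_J$. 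Your proof never produces such an operator substitute for $G(a/q)$, so the key $\ell^p$ bound for $F$ is unsupported.

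Second, your error estimate is also not justified as stated: Property \ref{pr:6} gives a \emph{pointwise} bound $\calO((\log N_j)^{-\alpha})$ on the multiplier difference, which yields only an $\ell^2$ operator bound by Plancherel; a pointwise bound on a multiplier supported on disjoint cubes does not give an $\ell^p\to\ell^p$ norm $\lesssim(\log N_j)^{-\alpha}\log(s+2)$, and \eqref{eq:10} does not apply to this difference at all (it is not an Ionescu--Wainger discretization of a continuous family). The paper obtains $\ell^p$ smallness by interpolating the $\ell^2$ gain \eqref{eq:51} with the trivial logarithmic $\ell^p$ bound \eqref{eq:50} to reach \eqref{eq:53}; this interpolation step (with $\alpha$ chosen large depending on $p$) is what you would need to insert, both for your error term and, if you tried to salvage the main term, for any comparison involving $G(a/q)$. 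Until the Gauss-sum weights are converted into a bona fide $\ell^p$-bounded operator (as with $\frkM_{J_{n_1}}$) and the pointwise-to-$\ell^p$ passage is done by interpolation, the proposed proof does not establish the theorem for $p\neq 2$.
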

\begin{proof}
	For the proof, let us consider the following multiplier
	\[
		\Pi^{\beta}_{n, s} (\xi) = 
		\sum_{a/q \in \mathscr{R}_s^\beta} \big(\Upsilon_{N_n}(\xi - a/q) - \Upsilon_{N_{n-1}}(\xi-a/q)\big) 
		\eta_{s}(\xi - a/q).
	\]
	Fix $s < n_1 < n_2 \leq \min\big\{2^{\kappa_s}, 2 n_1\big\}$. Let $J_{n_1} = N_{n_1} 2^{-3 \chi\sqrt{\log N_{n_1}}}$. 
	We claim the following holds true.
	\begin{claim}
		\label{clm:5}
		For each $\beta \in \NN$ and $p \in (1, \infty)$ there is $C > 0$, such that for all 
		$n_1 \leq n \leq n_2 \leq 2n_1$,
		\begin{equation}
			\label{eq:53}
			\Big\|\calF^{-1}\Big(\big((\frkY_{N_n}-\frkY_{N_{n-1}}) \Xi^{\beta}_{n, s} 
			- \frkM_{J_{n_1}} \Pi^{\beta}_{n, s}\big) \hat{f} \Big)\Big\|_{\ell^p}
			\leq
			C	
			n^{-2}
			\|f\|_{\ell^p}.
		\end{equation}
		The constant $C$ is independent of $n_1$ and $n_2$.
	\end{claim}
	Let us first observe that, by \eqref{eq:121}, we can write
	\begin{align*}
		&\Big\|\calF^{-1}\Big(\big((\frkY_{N_n} - \frkY_{N_{n-1}}) \Xi^{\beta}_{n, s} 
		- \frkM_{J_{n_1}} \Pi^{\beta}_{n, s}\big) \hat{f}\Big) \Big\|_{\ell^p} \\
		&\qquad\qquad\leq
		\big\|
		\calF^{-1}\big((\frkY_{N_n} - \frkY_{N_{n-1}}) \Xi^{\beta}_{n, s} \hat{f}\big)
		\big\|_{\ell^p}
		+
		\big\|\calF^{-1}\big(\frkM_{J_{n_1}} \Pi^{\beta}_{n, s} \hat{f} \big)\big\|_{\ell^p}\\
		&\qquad\qquad\lesssim
		\big\|\calF^{-1}\big(\Xi^{\beta}_{n, s} \hat{f}\big)\big\|_{\ell^p}
		+
		\big\|\calF^{-1}\big(\Pi^{\beta}_{n, s} \hat{f} \big)\big\|_{\ell^p},
	\end{align*}
	thus, by \eqref{eq:10},
	\begin{equation}
		\label{eq:50}
		\Big\|\calF^{-1}\Big(\big((\frkY_{N_n} - \frkY_{N_{n-1}}) \Xi^{\beta}_{n, s} 
		- \frkM_{J_{n_1}} \Pi^{\beta}_{n, s}\big) \hat{f} \Big)\Big\|_{\ell^p}
		\lesssim
		\log(n+1) \|f\|_{\ell^p}.
	\end{equation}
	We can improve the estimate for $p = 2$. Namely, we are going to show that for each $\alpha > 0$, and
	$n_1 \leq n \leq n_2 \leq 2n_1$,
	\begin{equation}
		\label{eq:51}
		\Big\|\calF^{-1}\Big(\big((\frkY_{N_n} - \frkY_{N_{n-1}}) \Xi^{\beta}_{n, s} 
		- \frkM_{J_{n_1}} \Pi^{\beta}_{n, s}\big) \hat{f} \Big)\Big\|_{\ell^2}
        \lesssim
		n^{-\alpha \rho} \|f\|_{\ell^2}.
	\end{equation}
	Given $\alpha > 0$, let $c$ be the minimal value among those determined in Lemma \ref{lem:3} and Lemma \ref{lem:4}.
	Then for each $a/q \in \mathscr{R}_s^\beta$, 
	\begin{align*}
		&
		\big(\frkY_{N_n}(\xi) - \frkY_{N_{n-1}}(\xi)\big) \eta_{n}(\xi - a/q) 
		- \frkM_{J_{n_1}}(\xi) \big(\Upsilon_{N_n}(\xi - a/q) - \Upsilon_{N_{n-1}}(\xi - a/q)\big)
		\eta_{s}(\xi - a/q) \\
		&\qquad\qquad=
		G(a/q) \big(\Upsilon_{N_n}(\xi - a/q) - \Upsilon_{N_{n-1}}(\xi - a/q)\big) 
		\big(1-\Phi_{J_{n_1}}(\xi -a/q)\big) \eta_{n}(\xi - a/q) \\
		&\qquad\qquad\phantom{=}+
		\frkM_{J_{n_1}}(\xi) \big(\Upsilon_{N_n}(\xi-a/q) - \Upsilon_{N_{n-1}}(\xi-a/q)\big) 
		\big(\eta_{s}(\xi - a/q) - \eta_{n}(\xi - a/q)\big) \\
		&\qquad\qquad\phantom{=}+
		\calO\big((\log N_n)^{-\alpha}\big) \eta_{s}(\xi - a/q).
	\end{align*}
	If $\eta_{s}(\xi - a/q) - \eta_{n}(\xi - a/q) \neq 0$, then
	\[
		\bigg|\xi_\gamma - \frac{a_\gamma}{q} \bigg| \geq \frac{1}{32d} 
		N_n^{-\abs{\gamma}} 2^{\chi \sqrt{\log N_n}},
		\qquad\text{for some }\gamma \in \Gamma,
	\]
	thus, by Property \ref{pr:2},
	\[
		\big| \Upsilon_{N_n}(\xi - a/q) - \Upsilon_{N_{n-1}}(\xi-a/q)\big| 
		\lesssim \big|N_n^A (\xi - a/q) \big|_\infty^{-1/d}
		\lesssim
		2^{-\chi \sqrt{\log N_n}/d}.
	\]
	Moreover, if $\eta_{n}(\xi - a/q) > 0$ then
	\[
		\bigg|\xi_\gamma - \frac{a_\gamma}{q}\bigg| \leq \frac{1}{16 d} N_n^{-\abs{\gamma}} 2^{\chi \sqrt{\log N_n}}
		\leq J_{n_1}^{-\abs{\gamma}} 2^{-\chi\sqrt{\log N_{n_1}}},
		\qquad\text{for all }\gamma \in \Gamma,
	\]
	hence, by \eqref{eq:26}, we obtain
	\[
		\big|1 - \Phi_{J_{n_1}}(\xi - a/q)\big| \lesssim 
		\big| J_{n_1}^A(\xi - a/q) \big|_\infty 
		\lesssim 2^{-\chi \sqrt{\log N_{n_1}}}.
	\]
	Therefore,
	\begin{equation}
		\label{eq:48}
		\begin{aligned}
		(\frkY_{N_n}(\xi) - \frkY_{N_{n-1}}(\xi)\big) \eta_{n}(\xi - a/q)
		&=\frkM_{J_{n_1}}(\xi) \big(\Upsilon_{N_n}(\xi - a/q) - \Upsilon_{N_{n-1}}(\xi - a/q)\big)
		\eta_{s}(\xi - a/q) \\
		&\phantom{=}+\calO\big((\log N_n)^{-\alpha}\big) \eta_{s}(\xi - a/q).
		\end{aligned}
	\end{equation}
	Since the functions $\eta_{s}(\cdot - a/q)$ have disjoint supports provided that $a \in \bfA_q$ and
	$1 \leq q \leq e^{(s+1)^{\rho/10}}$, by \eqref{eq:48} and Plancherel's theorem we conclude \eqref{eq:51}.
	Now, by interpolation between \eqref{eq:51} and \eqref{eq:50} we arrive at \eqref{eq:53}. 
	
	With a help of Claim \ref{clm:5}, we obtain
	\[
		\begin{aligned}
		&
		\Big\|V_r\Big(\sum_{j = n_1}^n \calF^{-1}\Big(\big((\frkY_{N_j}-\frkY_{N_{j-1}}) \Xi^{\beta}_{j, s} 
		- \frkM_{J_{n_1}} \Pi^{\beta}_{j, s}\big) \hat{f} \Big):
		n_1 \leq n \leq n_2
		\Big)
	 	\Big\|_{\ell^p}\\
		&\qquad\qquad\lesssim
		\sum_{n = n_1}^{n_2}
		\Big\|
        \calF^{-1}\Big(\big((\frkY_{N_n} - \frkY_{N_{n-1}}) \Xi^{\beta}_{n, s} 
		- \frkM_{J_{n_1}} \Pi^{\beta}_{n, s}\big) \hat{f} \Big)
        \Big\|_{\ell^p} \\
		&\qquad\qquad\lesssim
		\Big(\sum_{n = 1}^\infty
		n^{-2}\Big) 
		\|f\|_{\ell^p}.
		\end{aligned}
	\]
	Hence,
	\begin{equation}
		\label{eq:46}
		\begin{aligned}
		&
		\Big\|
		V_r\Big(
		\sum_{j = n_1}^n 
       	\calF^{-1}\big((\frkY_{N_j}-\frkY_{N_{j-1}}) \Xi^{\beta}_{j, s} \hat{f} \big)
		:
		n_1 \leq n \leq n_2
		\Big)
        \Big\|_{\ell^p} \\
		&\qquad\qquad\lesssim
		\|f\|_{\ell^p} +
 		\Big\|V_r\Big(
		\sum_{j = n_1}^n
		\calF^{-1}\big(\frkM_{J_{n_1}} \Pi^{\beta}_{j, s} \hat{f} \big) : n_1 \leq n \leq 2n_1 \Big)
        \Big\|_{\ell^p},
		\end{aligned}
	\end{equation}
	with an implied constant independent of $n_1$. We next claim that the following holds true.
	\begin{claim}
		\label{clm:1}
		For each $\beta \in \NN$ and $p \in (1, \infty)$ there is $C > 0$, such that for all $s \in \NN_0$, we have
		\begin{equation}
			\label{eq:61}
			\Big\|
			V_r\Big(
			\sum_{j = 0}^n
			\calF^{-1}\big(\Pi^{\beta}_{j, s} \hat{f} \big) :
			0 \leq n \leq 2^{\kappa_s}
			\Big)
	        \Big\|_{\ell^p}
			\leq
			C
			\kappa_s \log (s+2)
			\|f\|_{\ell^p}.
		\end{equation}
	\end{claim}
	Let us see that \eqref{eq:61} suffices to finish the proof of the theorem. Indeed, \eqref{eq:46} together 
	with \eqref{eq:61} imply that
	\[
		\Big\|
        V_r\Big(
		\sum_{j = n_1}^n
        \calF^{-1}\Big((\frkY_{N_j} - \frkY_{N_{j-1}}) \Xi^{\beta}_{j, s} \hat{f} \big)
        :
        n_1 \leq n \leq n_2
        \Big)
        \Big\|_{\ell^p}
        \lesssim
		\kappa_s \log (s+2)
        \|f\|_{\ell^p}.
	\]
	Therefore, by \eqref{eq:162} and Minkowski's inequality
	\begin{align*}
		&\Big\|V_r\Big(\sum_{j = s+1}^n
		\calF^{-1}\big((\frkY_{N_j} - \frkY_{N_{j-1}}) \Xi^{\beta}_{j, s} \hat{f} \Big)
		 : s < n \leq 2^{\kappa_s} \big)
        \Big\|_{\ell^p} \\
		&\qquad\qquad\lesssim
		\kappa_s^{1-\frac{1}{r}}
		\Big(
		\sum_{\log_2 s \leq m < \kappa_s}
		\Big\|
		V_r\Big(
		\sum_{j = 2^m}^n
        \calF^{-1}\big((\frkY_{N_j} - \frkY_{N_{j-1}}) \Xi^{\beta}_{j, s} \hat{f} \big) :
        2^m \leq n \leq 2^{m+1}
		\Big)
		\Big\|_{\ell^p}^r
		\Big)^{1/r}\\
		&\qquad\qquad\lesssim
		\kappa_s
		\max_{\atop{s < n_1 < n_2 \leq 2 n_1}{n_2 \leq 2^{\kappa_s}}}
		\Big\|
		V_r\Big(
		\sum_{j = n_1}^n
        \calF^{-1}\big((\frkY_{N_j} - \frkY_{N_{j-1}}) \Xi^{\beta}_{j, s} \hat{f} \big) :
		n_1 \leq n \leq n_2
		\Big)
        \Big\|_{\ell^p} \\
		&\qquad\qquad\lesssim
		\kappa_s^2 \log (s+2) \|f\|_{\ell^p}.
	\end{align*}
	It remains to prove Claim \ref{clm:1}. By Lemma \ref{lem:1}, we can write
	\begin{equation}
		\label{eq:38}
		\begin{aligned}
		&\Big\|V_r\Big(
		\sum_{j = 0}^n
		\calF^{-1}\big(\Pi^{\beta}_{j, s} \hat{f} \big)
		: 0 \leq n \leq 2^{\kappa_s}
		\Big)
		\Big\|_{\ell^p} \\
		&\qquad\qquad
		\leq
		\sqrt{2}
		\sum_{i = 0}^{\kappa_s}
		\Big\|
		\Big(
		\sum_{j = 0}^{2^{\kappa_s-i}-1} 
		\Big|
		\sum_{m \in I_j^i} \calF^{-1}\Big(\big(\Pi^{\beta}_{m+1, s} - \Pi^{\beta}_{m, s}\big) \hat{f} \Big)
		\Big|^2
		\Big)^{1/2}
		\Big\|_{\ell^p},
		\end{aligned}
	\end{equation}
	where $I_j^i = \big\{j 2^i, j 2^i+1, \ldots, (j+1) 2^i-1\big\}$. Let us fix $i \in \{0, 1, \ldots \kappa_s\}$.
	In view of Proposition \ref{prop:4},
	\[
		\Big\|
		\Big(
		\sum_{j = 0}^{2^{\kappa_s-i}-1}
		\Big|
		\sum_{m \in I_j^i} \calF^{-1}\Big(\big(\Upsilon_{N_{m+1}}  - \Upsilon_{N_m}\big) \hat{f} \Big)
		\Big|^2
		\Big)^{1/2}	
		\Big\|_{L^p} \lesssim \|f\|_{\ell^p},
	\]
	where the implied constant is independent of $i$. Hence, by \eqref{eq:10}, we obtain
	\[
		\Big\|
        \Big(
        \sum_{j = 0}^{2^{\kappa_s-i}-1}
        \Big|
        \sum_{m \in I_j^i} \calF^{-1}\Big(\big(\Pi^{\beta}_{m+1, s} - \Pi^{\beta}_{m, s}\big) \hat{f} \Big)
        \Big|^2
        \Big)^{1/2}
        \Big\|_{\ell^p}
		\lesssim
		\log (s+2) \|f\|_{\ell^p},
	\]
	which together with \eqref{eq:38} implies \eqref{eq:61}.
\end{proof}

We now turn to studying the part of the variational seminorm where $2^{\kappa_s} < n$. For $s \in \NN_0$ we set
\[
	Q_s = \big(\big\lfloor e^{(s+1)^{\rho/10}} \big\rfloor\big)!
\]
\begin{theorem}
	\label{thm:4}
	For each $\beta \in \NN$ there is $C > 0$, such that for all $r \in (2, \infty)$, $s \in \NN_0$, and any finitely
	supported function $f: \ZZ^d \rightarrow \CC$, we have
	\[
		\Big\|
		V_r\Big(
		\sum_{j = 2^{\kappa_s}+1}^n
		\calF^{-1}\big((\frkY_{N_j} - \frkY_{N_{j-1}}) \Xi_{j, s}^{\beta} \hat{f} \big) :
		2^{\kappa_s} < n
		\Big)
		\Big\|_{\ell^2}
		\leq
		C
		\frac{r}{r-2}
		(s+1)^{-\delta \beta \rho}
		\|f\|_{\ell^2},
	\]
	where $\delta$ is determined in Theorem \ref{thm:2}.
\end{theorem}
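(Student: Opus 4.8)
The plan is to follow the pattern of Theorem~\ref{thm:3}, except that at the large scales $n>2^{\kappa_s}$ we may transfer directly the \emph{a priori} $L^2$ bound for the $r$-variation of the continuous multipliers (Property~\ref{pr:1}); this is what produces the factor $\tfrac{r}{r-2}$ and, crucially, avoids any loss in $\kappa_s$. \emph{Step 1: replacing the discrete multiplier by its asymptotic on $\ell^2$.} For $a/q\in\mathscr{R}_s^\beta$ we have $q\in P_{\lfloor(s+1)^\rho\rfloor}$, hence $q\le e^{(s+1)^{\rho/10}}$, while $\log N_n\gtrsim 2^{\kappa_s\rho}$ for $n>2^{\kappa_s}$; since $\kappa_s\simeq(s+1)^{\rho/10}$, a suitable choice of $\beta'$ depending only on $d$ and $\rho$ yields $q\le(\log N_n)^{\beta'}$ for all such $n$. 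On $\supp\eta_n(\cdot-a/q)$ one has $\big|\xi_\gamma-a_\gamma/q\big|\le\tfrac{1}{16d}N_n^{-|\gamma|}L$ with $L=2^{\chi\sqrt{\log N_n}}$, and $L\le\exp\big(c\sqrt{\log N_n}\big)$ because $\chi\log 2<c$; thus Property~\ref{pr:5} applies and gives, on that support,
\[
	\frkY_{N_n}(\xi)-\frkY_{N_{n-1}}(\xi)=G(a/q)\big(\Upsilon_{N_n}(\xi-a/q)-\Upsilon_{N_{n-1}}(\xi-a/q)\big)+\calO\big(\exp(-c'\sqrt{\log N_n})\big),
\]
with $c'=c-\chi\log 2>0$. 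The supports of $\eta_n(\cdot-a/q)$, $a/q\in\mathscr{R}_s^\beta$, are pairwise disjoint (as in Theorem~\ref{thm:3}), so by Plancherel the $\ell^2\to\ell^2$ norm of the multiplier operator with symbol $(\frkY_{N_n}-\frkY_{N_{n-1}})\Xi_{n,s}^\beta-\Lambda_{n,s}^\beta$ is $\lesssim\exp(-c'\sqrt{\log N_n})$. Since $V_r\le V_1$ on difference sequences, this error contributes at most $\sum_{n>2^{\kappa_s}}\exp(-c'\sqrt{\log N_n})\|f\|_{\ell^2}$, which decays super-polynomially in $s$ and is therefore $\lesssim(s+1)^{-\delta\beta\rho}\|f\|_{\ell^2}$. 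Hence it suffices to estimate $\big\|V_r\big(\sum_{j>2^{\kappa_s}}\calF^{-1}(\Lambda_{j,s}^\beta\hat f):n\big)\big\|_{\ell^2}$.

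\emph{Step 2: factoring out the Gaussian sums.} For $j>s$ one checks $\eta_j=\eta_j\eta_s$, so, using again the disjointness of the supports of $\eta_s(\cdot-a/q)$, one may write $\Lambda_{j,s}^\beta\hat f=\Theta_{j,s}^\beta\hat F$ with
\[
	\Theta_{j,s}^\beta(\xi)=\sum_{a/q\in\mathscr{R}_s^\beta}\big(\Upsilon_{N_j}(\xi-a/q)-\Upsilon_{N_{j-1}}(\xi-a/q)\big)\eta_j(\xi-a/q),\qquad \hat F=\sum_{a/q\in\mathscr{R}_s^\beta}G(a/q)\,\eta_s(\cdot-a/q)\,\hat f.
\]
For $a/q\in\mathscr{R}_s^\beta$ we have $q>\lfloor s^\rho\rfloor^\beta$ because $\NN_{\lfloor s^\rho\rfloor^\beta}\subseteq P_{\lfloor s^\rho\rfloor}$, so Theorem~\ref{thm:2} gives $|G(a/q)|\lesssim(s+1)^{-\delta\beta\rho}$, and Plancherel together with the disjoint supports yields $\|F\|_{\ell^2}\lesssim(s+1)^{-\delta\beta\rho}\|f\|_{\ell^2}$ (the finitely many small $s$ being harmless since the target is bounded).

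\emph{Step 3: transferring the continuous estimate.} It remains to show $\big\|V_r\big(\sum_{j>2^{\kappa_s}}\calF^{-1}(\Theta_{j,s}^\beta\hat F):n\big)\big\|_{\ell^2}\lesssim\tfrac{r}{r-2}\|F\|_{\ell^2}$. Where $\eta_j(\xi-a/q)<1$ one has $\big|N_j^A(\xi-a/q)\big|_\infty\gtrsim 2^{\chi\sqrt{\log N_j}}$, so Property~\ref{pr:2} gives $\big|\Upsilon_{N_j}(\xi-a/q)-\Upsilon_{N_{j-1}}(\xi-a/q)\big|\lesssim 2^{-\chi\sqrt{\log N_j}/d}$; hence replacing each $\eta_j(\cdot-a/q)$ in $\Theta_{j,s}^\beta$ by $\eta_s(\cdot-a/q)$ changes the operator only by an $\ell^2$-error summable in $j$. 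The resulting multiplier is, restricted to the frequencies $\mathscr{R}_s^\beta\subseteq\mathscr{U}_{\lfloor(s+1)^\rho\rfloor}^\beta$, an Ionescu--Wainger discretization of the continuous family $\Upsilon_{N_j}-\Upsilon_{N_{j-1}}$. Since Property~\ref{pr:1}, applied to the subsequence $(N_j:j>2^{\kappa_s})$, bounds the $L^2(\RR^d)$-norm of the corresponding $r$-variation by $C\tfrac{r}{r-2}\|\cdot\|_{L^2}$, the sampling/transference argument underlying \eqref{eq:10} (cf.\ \cite{mst1,msz-k}) delivers the desired $\ell^2(\ZZ^d)$ bound, the auxiliary $\log(s+2)$ factor being harmless. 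Combining Steps 1--3 gives the claim.

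\emph{Main obstacle.} The heart of the argument is Step~3: one must transfer the continuous $r$-variation estimate to $\ell^2(\ZZ^d)$ \emph{uniformly over all scales $n>2^{\kappa_s}$ simultaneously}, which is exactly what the Ionescu--Wainger machinery provides and why Lemma~\ref{lem:1} (which would introduce a $\kappa_s$-loss) cannot be used here; the decay in $s$ is then supplied separately by the Gaussian-sum estimate of Step~2. The secondary point requiring care is Step~1, where the cut-off scale $\kappa_s\simeq(s+1)^{\rho/10}$ must be chosen large enough that every denominator occurring in $\mathscr{R}_s^\beta$ obeys the hypothesis $q\le(\log N_n)^{\beta'}$ of Property~\ref{pr:5} for all $n>2^{\kappa_s}$.
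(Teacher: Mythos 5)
Your Steps 1 and 2 are sound and parallel the paper (for large scales Property \ref{pr:5} applies because every $q$ occurring in $\mathscr{R}_s^\beta$ satisfies $q\le e^{(s+1)^{\rho/10}}\le\log N_n$ once $n>2^{\kappa_s}$, and the Gaussian-sum decay $(s+1)^{-\delta\beta\rho}$ comes from Theorem \ref{thm:2} exactly as in Theorem \ref{thm:3}). The gap is Step 3, which you yourself flag as the heart of the matter and which is not an argument as written. The estimate \eqref{eq:10} is a vector-valued Ionescu--Wainger bound for \emph{square functions} $\big(\sum_j|\calF^{-1}(\Theta_j^\beta\hat f)|^2\big)^{1/2}$; nothing in the paper (or in the cited Ionescu--Wainger theory) transfers an $r$-\emph{variational} bound through these projections, and the ``sampling/transference argument underlying \eqref{eq:10}'' is not a sampling argument at all. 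The only way the paper converts the continuous a priori estimate of Property \ref{pr:1} (the source of the factor $\tfrac{r}{r-2}$) into an $\ell^2$ variational bound without the $\kappa_s$-loss of Lemma \ref{lem:1} is the Magyar--Stein--Wainger sampling principle \cite[Proposition 4.1]{mst1}, and to invoke it one must first replace the cutoffs $\eta_n$ (or $\eta_s$) by the single, $n$-independent cutoff $\varrho_s(\xi)=\eta\big(Q_{s+1}^{3dA}\xi\big)$ with $Q_s=\big(\lfloor e^{(s+1)^{\rho/10}}\rfloor\big)!$, i.e.\ work with the multipliers $\Omega_{n,s}^\beta$ rather than your $\Lambda_{n,s}^\beta$ or $\Theta_{n,s}^\beta$.

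This replacement is not cosmetic. The sampling argument exploits that every denominator in $\mathscr{R}_s^\beta$ divides $Q_s$, so the relevant functions are $Q_s\ZZ^d$-periodic in one variable, and it needs the multiplier pieces to live on cubes tiny compared with $Q_s^{-1}$; your cutoff $\eta_s$, of width roughly $2^{\chi\sqrt{\log N_s}}N_s^{-A}$, is enormously wider than $Q_s^{-1}$, so the periodization simply does not go through with it. Moreover the paper still has to pay for passing from the diagonal to the sampled points: it averages over shifts $u\in\NN_{Q_s}^d$, bounding $\big|\|I(x,x+u)\|_{\ell^2(x)}-\|I(x,x)\|_{\ell^2(x)}\big|$ via the gradient-type bound $\norm{\xi}\,|\Upsilon_{N_j}(\xi)-\Upsilon_{N_{j-1}}(\xi)|\lesssim N_j^{-1/d}$ and the numerical inequality relating $Q_s$, $\#\mathscr{R}_s^\beta$ and $2^{\kappa_s}$ --- this is exactly where the precise choice $\kappa_s\simeq 20d\rho^{-1}(s+1)^{\rho/10}$ is used, not merely to guarantee $q\le(\log N_n)^{\beta'}$ as in your ``secondary point.'' None of this appears in your proposal, so the claimed conclusion of Step 3 (and hence the theorem, even allowing your extraneous $\log(s+2)$ factor, which the stated bound does not contain) is not established.
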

\begin{proof}
	Let us define
	\[
		\Omega_{n, s}^{\beta} = 
		\sum_{a/q \in \mathscr{R}_s^\beta} G(a/q) \big( \Upsilon_{N_n}(\xi - a/q) - \Upsilon_{N_{n-1}}(\xi - a/q)\big) 
		\varrho_s(\xi - a/q),
	\]
	where
	\[
		\varrho_s(\xi) = \eta\big(Q_{s+1}^{3d A} \xi \big).
	\]
	Our first goal is to show that the multipliers $\Omega_{n, s}^\beta$ approximate 
	$(\frkY_{N_n} - \frkY_{N_{n-1}}) \Xi_{n, s}^{\beta}$ well.
	\begin{claim}
		\label{clm:2}
		For each $\beta \in \NN$ there is $C > 0$, such that for all $s \in \NN_0$, and $n > 2^{\kappa_s}$,
		\begin{equation}
			\label{eq:49}
			\Big\|
			\calF^{-1}\Big(\big((\frkY_{N_n} - \frkY_{N_{n-1}}) \Xi_{n, s}^{\beta} 
			- \Omega_{n, s}^\beta \big) \hat{f} \Big)
			\Big\|_{\ell^2}
			\leq
			C \cdot 2^{-\chi \sqrt{\log N_n}/d} \|f\|_{\ell^2}.
		\end{equation}
	\end{claim}
	Since $n > 2^{\kappa_s}$, for each $a/q \in \mathscr{R}_s^\beta$ we have $q \leq \log N_n$. Therefore, by
	\eqref{eq:33}, we obtain
	\begin{align*}
		&\big(\frkY_{N_n}(\xi) - \frkY_{N_{n-1}}(\xi)\big)\eta_{n}(\xi - a/q) 
		- G(a/q) \big(\Upsilon_{N_n}(\xi-  a/q) - \Upsilon_{N_{n-1}}(\xi - a/q) \big)\varrho_s(\xi - a/q) \\
		&\qquad=
		G(a/q) \big(\Upsilon_{N_n}(\xi-a/q) - \Upsilon_{N_{n-1}}(\xi-a/q)\big) 
		\big(\eta_{n}(\xi - a/q) - \varrho_s(\xi - a/q)\big)\\
		&\qquad\phantom{=}+
		\calO\Big(\exp\big((\chi \log 2- c)\sqrt{\log N_n}\big)\Big).
	\end{align*}
	Next, if $\varrho_s(\xi - a/q) - \eta_{n}(\xi-a/q) \neq 0$, then
	\[
		\bigg|\xi_\gamma - \frac{a_\gamma}{q} \bigg| 
		\geq 
		\frac{1}{32d} N_n^{-\abs{\gamma}} 2^{\chi \sqrt{\log N_n}},
		\qquad\text{for some }\gamma \in \Gamma,
	\]
	and thus, by \eqref{eq:170}, we have
	\[
		\big|\Upsilon_{N_n}(\xi - a/q) - \Upsilon_{N_{n-1}}(\xi -a/q)\big| 
		\lesssim \big|N_n^A (\xi - a/q) \big|^{-1/d}_\infty
		\lesssim
		2^{-\chi \sqrt{\log N_n}/d}.
	\]
	Hence,
	\begin{align*}
		\big(\frkY_{N_n}(\xi) - \frkY_{N_{n-1}}(\xi)\big)\eta_{n}(\xi - a/q)
        &= G(a/q) \big(\Upsilon_{N_n}(\xi-  a/q) - \Upsilon_{N_{n-1}}(\xi - a/q) \big)\varrho_s(\xi - a/q) \\
		&\phantom{=}+
		\calO\Big(2^{-\chi\sqrt{\log N_n}/d}\Big).
	\end{align*}
	Since the functions $\eta_{s}(\cdot - a/q)$ have disjoint supports while $a/q$ varies over
	$\mathscr{R}_s^\beta$, by Plancherel's theorem we obtain \eqref{eq:49}.

	Now, by applying Claim \ref{clm:2}, 
	\begin{align*}
		&
		\Big\|
        V_r\Big(
        \sum_{j = 2^{\kappa_s}+1}^n
        \calF^{-1}\Big(\big((\frkY_{N_j} - \frkY_{N_{j-1}}) \Xi_{j, s}^{\beta}
        - \Omega_{j, s}^\beta \big)\hat{f} \Big)
        :
        2^{\kappa_s} < n
        \Big)
        \Big\|_{\ell^2} \\
		&\qquad\qquad
		\lesssim
        (s+1)^{-\delta \beta \rho}
        \Big(\sum_{n = 2^{\kappa_s}+1}^\infty 2^{-\chi \sqrt{\log N_n}/(2d)} \Big)\|f\|_{\ell^2},
	\end{align*}
	thus
	\begin{align*}
		&\Big\|
		V_r\Big(
		\sum_{j = 2^{\kappa_s}+1}^n
		\calF^{-1}\big((\frkY_{N_j} - \frkY_{N_{j-1}}) \Xi_{j, s}^{\beta} \hat{f} \big) :
		2^{\kappa_s} < n
		\Big)
        \Big\|_{\ell^2} \\
		&\qquad\leq
		(s+1)^{-\delta \beta \rho} \|f\|_{\ell^p} 
		+
		\Big\|
		V_r\Big(
		\sum_{j = 2^{\kappa_s}+1}^n
		\calF^{-1}\big(\Omega_{j, s}^\beta \hat{f} \big)
		: 
		2^{\kappa_s} < n
		\Big)
        \Big\|_{\ell^2}.
	\end{align*}
	Our next task is to show that there is $C > 0$ such that
	\begin{equation}
		\label{eq:75}
		\Big\|
		V_r\Big(
		\sum_{j = 2^{\kappa_s}+1}^n
		\calF^{-1}\big(\Omega_{j, s}^\beta \hat{f} \big) : 
		2^{\kappa_s} < n\Big)
        \Big\|_{\ell^2}
		\leq
		C (s+1)^{-\delta \beta\rho} \|f\|_{\ell^2}.
	\end{equation}
	For the proof, let us define
	\[
		I(x, y) = V_r\Big(
		\sum_{a/q \in \mathscr{R}_s^\beta} G(a/q) e^{-2\pi i \sprod{(a/q)}{x}} 
		\sum_{j = 2^{\kappa_s}+1}^n 
		\calF^{-1} \Big(\big(\Upsilon_{N_j} - \Upsilon_{N_{j-1}} \big)\varrho_s \hat{f}\big(\cdot + a/q)\big)\Big)(y)
		:
		2^{\kappa_s} < n 
		\Big),
	\]
	and
	\[
		J(x, y) = \sum_{a/q \in \mathscr{R}_s^\beta}
		G(a/q) e^{-2 \pi i \sprod{(a/q)}{x}} \calF^{-1}\Big(\varrho_s \hat{f}\big(\cdot + a/q)\big) \Big)(y).
	\]
	By Plancherel's theorem, for any $u \in \NN^d_{Q_s}$ and $a/q \in \mathscr{R}_s^\beta$, we have
	\begin{align*}
		&
		\Big\|
		\calF^{-1}\Big(\big(\Upsilon_{N_j} - \Upsilon_{N_{j-1}}\big) \varrho_s \hat{f}(\cdot + a/q)\Big)(x+u) 
		-
		\calF^{-1}\Big(\big(\Upsilon_{N_j} - \Upsilon_{N_{j-1}}\big) \varrho_s \hat{f}(\cdot + a/q)\Big)(x)
		\Big\|_{\ell^2(x)} \\
		&\qquad\qquad\qquad\qquad=
		\Big\|
		\big(1 - e^{- 2\pi i \sprod{\xi}{u}}\big) 
		\big(\Upsilon_{N_j}(\xi) - \Upsilon_{N_{j-1}}(\xi)\big) \varrho_s(\xi) \hat{f}(\xi + a/q)
		\Big\|_{L^2({\rm d} \xi)} \\
		&\qquad\qquad\qquad\qquad\lesssim
		N_j^{-1/d} \norm{u} \cdot \|\varrho_s(\cdot + a/q) \hat{f} \|_{L^2},
	\end{align*}
	because in view of \eqref{eq:170}, for each $\xi \in \TT^d$,
	\[
		\norm{\xi} \cdot \big|\Upsilon_{N_j}(\xi) - \Upsilon_{N_{j-1}}(\xi)\big|
		\lesssim
		N_j^{-1/d} \norm{\xi}_\infty^{1-1/d}
		\lesssim
		N_j^{-1/d}.
	\]
	Therefore,
	\[
		\Big|
		\big\|I(x, x+u)\big\|_{\ell^2(x)} - \big\|I(x, x)\big\|_{\ell^2(x)}
		\Big|
		\lesssim
		\norm{u} \Big( \sum_{j = 2^{\kappa_s}+1}^\infty N_j^{-1/d} \Big)
		\sum_{a/q \in \mathscr{R}_s^\beta} 
		\big\|\varrho_s(\cdot - a/q) \hat{f}\big\|_{L^2}.
	\]
	Since the set $\mathscr{R}_s^\beta$ has at most $e^{(d+1)(s+1)^{\rho/10}}$ elements, and
	\[
		(d+1) (s+1)^{\rho/10} + (s+1)^{\rho/10} e^{(s+1)^{\rho/10}} - \frac{\log 2}{2d} 2^{\kappa_s} \leq
		-(s+1)^\rho,
	\]
	we obtain
	\[
		\big\|
		I(x, x)
		\big\|_{\ell^2(x)} \lesssim
		\big\|
		I(x, x+u)
		\big\|_{\ell^2(x)} + 2^{-(s+1)^\rho} \|f\|_{\ell^2}.
	\]
	Hence,
	\begin{equation}
		\label{eq:74}
		\Big\|
		V_r\Big(
		\sum_{j = 2^{\kappa_s}+1}^n \calF^{-1}\big(\Omega_{j, s}^\beta \hat{f} \big)
		: 2^{\kappa_s} < n 
		\Big)
		\Big\|_{\ell^2}^2
		\lesssim
		\frac{1}{Q_s^d} \sum_{u \in \NN_{Q_s}^d} \big\|I(x, x+u)\big\|_{\ell^2(x)}^2 + 2^{-2(s+1)^\rho} \|f\|_{\ell^2}^2.
	\end{equation}
	Let us observe that the functions $x \mapsto I(x, y)$ and $x \mapsto J(x, y)$ are $Q_s\ZZ^d$-periodic. Therefore,
	by repeated change of variables, we get
	\begin{align*}
		\sum_{u \in \NN_{Q_s}^d} 
		\big\|I(x, x+u)\big\|_{\ell^2(x)}^2 = \sum_{x \in \ZZ^d} \sum_{u \in \NN_{Q_s}^d} I(x - u, x)^2
		=
		\sum_{x \in \ZZ^d} \sum_{u \in \NN_{Q_s}^d} I(u, x)^2 = \sum_{u \in \NN_{Q_s}^d} 
		\big\|I(u, x)\big\|_{\ell^2(x)}^2.
	\end{align*}
	By \cite[Proposition 4.1]{mst1} (see also \cite[Proposition 3.2]{mt3}), Property \ref{pr:1} entails
	that for each $u \in \NN_{Q_s}^d$, we have
	\begin{align*}
		\big\|I(u, x)\big\|_{\ell^2(x)}
		&=
		\Big\|
		V_r\Big(
		\sum_{j = 2^{\kappa_s}+1}^n
		\calF^{-1}\Big( \big( \Upsilon_{N_j} - \Upsilon_{N_{j-1}}\big) J(u, \hat{\cdot}) \Big)
		: 2^{\kappa_s} < n \Big) 
        \Big\|_{\ell^2(x)} \\
		&
		\leq C \frac{r}{r-2}
		\big\|J(u, x) \big\|_{\ell^2(x)}.
	\end{align*}
	Observe that
	\[
        \sum_{u \in \NN_{Q_s}^d} \big\|J(u, x) \big\|_{\ell^2(x)}^2 =
        \sum_{u \in \NN_{Q_s}^d}
        \big\|J(x, x+u)\big\|_{\ell^2(x)}^2.
    \]
	Since by Theorem \ref{thm:2} and disjointness of supports of $\varrho_s(\cdot - a/q)$ while $a/q$ varies over
    $\mathscr{R}_s^\beta$, we get
	\begin{align*}
		\big\|J(x, x+u)\big\|_{\ell^2(x)}^2 
		&= \int_{\TT^d} 
		\Big|
		\sum_{a/q \in \mathscr{R}_s^\beta} G(a/q) e^{2\pi i \sprod{(a/q)}{u}} \varrho_s(\xi- a/q)
		\Big|^2
		|\hat{f}(\xi)|^2
		{\: \rm d} \xi \\
		&\lesssim
		(s+1)^{-2\delta\beta\rho} \|f\|_{\ell^2}^2,
	\end{align*}
	we obtain
	\[
		\sum_{u \in \NN_{Q_s}^d}
        \big\|I(x, x+u)\big\|_{\ell^2(x)}^2
		\lesssim
		\bigg(\frac{r}{r-2}\bigg)^2
		(s+1)^{-2\delta \beta\rho} Q_s^d \|f\|_{\ell^2}^2,
	\]
	which together with \eqref{eq:74} implies \eqref{eq:75} and the proof of theorem is completed.
\end{proof}

\begin{theorem}
	\label{thm:7}
	For each $\beta \in \NN$ and $p \in (1, \infty)$ there is $C > 0$, such that for all $s \in \NN_0$,
	$r \in (2, \infty)$, and any finitely supported function
	$f: \ZZ^d \rightarrow \CC$, 
	\[
		\Big\|
		V_r\Big(
		\sum_{j = 2^{\kappa_s}+1}^n 
		\calF^{-1}\big((\frkY_{N_j} - \frkY_{N_{j-1}}) \Xi_{j, s}^{\beta} \hat{f} \big)
		: 2^{\kappa_s} < n
		\Big)
		\Big\|_{\ell^p}
		\leq
		C
		\frac{r}{r-2}
		\log (s+2)
		\|f\|_{\ell^p}.
	\]
\end{theorem}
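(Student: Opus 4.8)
The plan is to run the argument of Theorem \ref{thm:4} with $\ell^p$ in place of $\ell^2$, using the Ionescu--Wainger estimate \eqref{eq:10} wherever Theorem \ref{thm:4} used Plancherel together with the Gaussian sum bound of Theorem \ref{thm:2}. In this way one loses a factor $\log(s+2)$ (from \eqref{eq:10}) instead of gaining the decay $(s+1)^{-\delta\beta\rho}$, which is exactly what the statement permits; the interpolation of this $\log(s+2)$ bound against Theorem \ref{thm:4} will then yield the summability in $s$ required for \eqref{eq:45}.

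First I would reduce to the approximating multipliers $\Omega_{n, s}^\beta$ introduced in the proof of Theorem \ref{thm:4}. By \eqref{eq:121} and the scalar Ionescu--Wainger bound (\eqref{eq:10} with the trivial base multiplier and bounded coefficients, see \cite{iw, mst1}; here $\mathscr{R}_s^\beta \subseteq \mathscr{U}_{\lfloor (s+1)^\rho \rfloor}^\beta$, and for $n > 2^{\kappa_s}$ the cutoffs $\eta_n$ and $\varrho_s$ are narrow enough to be admissible Ionescu--Wainger cutoffs for the parameter $\lfloor (s+1)^\rho \rfloor$), one obtains
\[
	\big\|\calF^{-1}\big((\frkY_{N_n} - \frkY_{N_{n-1}}) \Xi_{n, s}^\beta \hat f\big)\big\|_{\ell^p}
	+ \big\|\calF^{-1}\big(\Omega_{n, s}^\beta \hat f\big)\big\|_{\ell^p}
	\lesssim \log(s+2)\|f\|_{\ell^p},
\]
uniformly in $n$. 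Interpolating with the $\ell^2$ bound of Claim \ref{clm:2} and summing the resulting (summable) tail over $n > 2^{\kappa_s}$, it remains to estimate $\big\|V_r\big(\sum_{j = 2^{\kappa_s}+1}^n \calF^{-1}(\Omega_{j, s}^\beta \hat f) : 2^{\kappa_s} < n\big)\big\|_{\ell^p}$.

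For this I would reuse the periodic unfolding from the proof of Theorem \ref{thm:4} verbatim: with $I(x, y)$ and $J(x, y)$ as defined there, the denominators occurring in $\mathscr{R}_s^\beta$ all divide $Q_s$, so $x \mapsto I(x, y)$ and $x \mapsto J(x, y)$ are $Q_s \ZZ^d$-periodic and the quantity to be bounded equals $\|I(x, x)\|_{\ell^p(x)}$. The slow-variation step is the one place where the $\ell^2$ argument does not transcribe directly: for $u \in \NN_{Q_s}^d$ one estimates $\|I(x, x) - I(x, x+u)\|_{\ell^p(x)}$ by the triangle inequality for $V_r$, and bounds each shift increment $\calF^{-1}\big((\Upsilon_{N_j} - \Upsilon_{N_{j-1}}) \varrho_s\, \hat f(\cdot + a/q)\big)(\cdot) - \calF^{-1}(\cdots)(\cdot + u)$ in $\ell^p$ by interpolating the sharp $\ell^2$ bound $\lesssim N_j^{-1/d} \norm{u}\,\|f\|_{\ell^2}$ (which uses $\norm{\xi} |\Upsilon_{N_j}(\xi) - \Upsilon_{N_{j-1}}(\xi)| \lesssim N_j^{-1/d}$ on $\supp \varrho_s$, cf. \eqref{eq:170}) with the crude $\ell^p$-multiplier bound $O(1)$ for $\big(1 - e^{\mp 2\pi i \sprod{\xi}{u}}\big) \varrho_s (\Upsilon_{N_j} - \Upsilon_{N_{j-1}})$. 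Summing over $a/q \in \mathscr{R}_s^\beta$ and $j > 2^{\kappa_s}$, the super-exponential smallness of $N_j^{-1/d}$ dominates $\#\mathscr{R}_s^\beta$ and $Q_s^d$ exactly as in the proof of Theorem \ref{thm:4}, giving $\|I(x, x) - I(x, x+u)\|_{\ell^p(x)} \lesssim 2^{-(s+1)^\rho} \|f\|_{\ell^p}$ uniformly in $u$; averaging the $p$-th powers over $u$ and changing variables (using periodicity, $\sum_u \|I(x, x+u)\|_{\ell^p(x)}^p = \sum_u \|I(u, x)\|_{\ell^p(x)}^p$, and likewise for $J$) reduces the problem to bounding $\frac{1}{Q_s^d} \sum_{u \in \NN_{Q_s}^d} \|I(u, x)\|_{\ell^p(x)}^p$.

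The final step uses the two inputs that replace orthogonality. For fixed $u$ the function $J(u, \cdot)$ has Fourier transform supported in $\supp \varrho_s$, a small ball about the origin, and by telescoping and invariance of $V_r$ under adding a constant sequence, $I(u, \cdot) \leq V_r\big(\calF^{-1}\big(\Upsilon_N \widehat{J(u, \cdot)}\big) : N \in \NN\big)$; hence the transference of the \emph{a priori} $L^p$ estimate of Property \ref{pr:1} (\cite[Proposition 4.1]{mst1}, now on $\ell^p$) gives $\|I(u, x)\|_{\ell^p(x)} \leq C \frac{r}{r-2} \|J(u, x)\|_{\ell^p(x)}$. Changing variables once more, $\sum_u \|J(u, x)\|_{\ell^p(x)}^p = \sum_u \|J(x, x+u)\|_{\ell^p(x)}^p$, and $J(x, x+u)$ is, up to an $\ell^p$-isometric shift, $\calF^{-1}(m_u \hat f)$ with $m_u(\xi) = \sum_{a/q \in \mathscr{R}_s^\beta} G(a/q) e^{\mp 2\pi i \sprod{(a/q)}{u}} \varrho_s(\xi - a/q)$ an Ionescu--Wainger multiplier with coefficients bounded by $1$, so \eqref{eq:10} yields $\|J(x, x+u)\|_{\ell^p(x)} \lesssim \log(s+2) \|f\|_{\ell^p}$ uniformly in $u$. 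Combining these displays gives $\|I(x, x)\|_{\ell^p(x)} \lesssim \frac{r}{r-2} \log(s+2)\|f\|_{\ell^p}$, which together with the reduction of the second paragraph finishes the proof. I expect the $\ell^p$ slow-variation estimate of the third paragraph to be the main obstacle: it is the only point where Plancherel fails and one must play an interpolated multiplier bound against the precise sizes of $N_j$, $\#\mathscr{R}_s^\beta$ and $Q_s$. Checking admissibility of the narrow cutoffs $\eta_n$ and $\varrho_s$ for Ionescu--Wainger parameter $\lfloor (s+1)^\rho \rfloor$ is a secondary bookkeeping point.
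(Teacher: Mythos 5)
Your reduction to the multipliers $\Omega_{n,s}^\beta$ (interpolating Claim \ref{clm:2} with a crude $\ell^p$ bound) and your use of periodicity mod $Q_s$ together with the transference of Property \ref{pr:1} are sound and run parallel to the paper; the paper uses an exact residue-class decomposition $x = Q_s x' + m$ and \cite[Proposition 4.2]{mst1} instead of re-running the shift-averaging trick of Theorem \ref{thm:4} in $\ell^p$, which spares it the interpolated slow-variation estimate you flag, but that part of your plan looks repairable. The genuine gap is the very last step: you bound $\big\|\calF^{-1}(m_u\hat f)\big\|_{\ell^p}$ for $m_u(\xi)=\sum_{a/q\in\mathscr{R}_s^\beta}G(a/q)e^{\mp 2\pi i \sprod{(a/q)}{u}}\varrho_s(\xi-a/q)$ by invoking \eqref{eq:10}, calling $m_u$ ``an Ionescu--Wainger multiplier with coefficients bounded by $1$''. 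But \eqref{eq:10} is stated and proved only for periodizations of one fixed multiplier, $\sum_{a/q}\eta(\mathcal{E}_n^{-1}(\xi-a/q))\Theta_j(\xi-a/q)$, with nothing depending on $a/q$; it does not permit coefficients varying with the fraction. The phases $e^{\mp 2\pi i\sprod{(a/q)}{u}}$ could be absorbed into a modulated base bump, but the Gauss sums $G(a/q)$ cannot, and for $p\neq 2$ there is no orthogonality to fall back on. Handling precisely these coefficients on $\ell^p$ is the content of \eqref{eq:70} in the paper, which requires a separate device: on the support of $\varrho_s(\cdot-a/q)$ one replaces $G(a/q)$ by $\frkM_J(\xi)$ with $J=2^N$, $N=\lfloor e^{(s+1)^{\rho/10}}\rfloor+1$, via Proposition \ref{prop:1} and \eqref{eq:26}, so that the weighted projection becomes the $\ell^p$-contraction $M_J$ composed with a genuine coefficient-free Ionescu--Wainger multiplier (giving the $\log(s+2)$ loss), while the error is controlled by interpolating a trivial $\ell^p$ bound against an exponentially small $\ell^2$ bound.

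A telling consistency check: when the paper itself needs a crude $\ell^p$ bound for $\Omega_{n,s}^\beta$, which has exactly this coefficient structure, it does not use \eqref{eq:10} but the trivial count $e^{(d+1)(s+1)^{\rho/10}}$ of elements of $\mathscr{R}_s^\beta$ (see \eqref{eq:60}); if bounded varying coefficients were admissible in \eqref{eq:10}, that detour would be unnecessary. So without supplying \eqref{eq:70} (or an equivalent argument such as the $\frkM_J$ approximation), your uniform-in-$u$ estimate $\|J(x,x+u)\|_{\ell^p(x)}\lesssim \log(s+2)\|f\|_{\ell^p}$ is unjustified, and this — not the slow-variation bookkeeping you single out — is the main obstacle of the theorem.
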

\begin{proof}
	First, we are going to refine Claim \ref{clm:2}.
	\begin{claim}
		\label{clm:3}
		For each $\beta \in \NN$ and $p \in (1, \infty)$ there is $c_p > 0$ such that for all $s \in \NN_0$, and
		$n > 2^{\kappa_s}$,
		\begin{equation}
			\label{eq:63}
			\Big\|
			\calF^{-1}\Big(\big((\frkY_{N_n} - \frkY_{N_{n-1}}) \Xi_{n, s}^{\beta} 
			- \Omega_{n, s}^\beta \big) \hat{f} \Big)
			\Big\|_{\ell^p}
			\leq
			C
			\cdot 2^{-\chi c_p \sqrt{\log N_n}}
			\|f\|_{\ell^p}.
		\end{equation}
	\end{claim}
	We notice the following trivial bound 
	\[
		\big\|
        \calF^{-1}\big(\Omega_{n, s}^\beta \hat{f} \big)
        \big\|_{\ell^p}
		\lesssim
		e^{(d+1)(s+1)^{\rho/10}} \|f\|_{\ell^p} 
		\leq
		(\log N_n) \|f\|_{\ell^p},
	\]
	thus, by \eqref{eq:121} and \eqref{eq:10}, we also have
	\begin{align}
		\nonumber
		\Big\|
		\calF^{-1}\Big(\big((\frkY_{N_n} - \frkY_{N_{n-1}}) \Xi_{n, s}^\beta - \Omega_{n, s}^\beta \big) \hat{f} \Big)
        \Big\|_{\ell^p}
		&\leq
		\big\|
		\calF^{-1}\big((\frkY_{N_n} - \frkY_{N_{n-1}}) \Xi_{n, s}^\beta \hat{f} \big)
        \big\|_{\ell^p}
		+
		\big\|
		\calF^{-1}\big(\Omega_{n, s}^\beta \hat{f} \big)
        \big\|_{\ell^p} \\
		\label{eq:60}
		&\lesssim
		(\log N_n)
		\|f\|_{\ell^p}.
	\end{align}
	Now, interpolation between \eqref{eq:60} and \eqref{eq:49} leads to \eqref{eq:63}.

	Next, using Claim \ref{clm:3}, we obtain
	\begin{align*}
		&
		\Big\|
		V_r\Big(
		\sum_{j = 2^{\kappa_s}+1}^n
        \calF^{-1}\Big(\big((\frkY_{N_j} - \frkY_{N_{j-1}}) \Xi_{j, s}^{\beta} 
		- \Omega_{j, s}^\beta \big) \hat{f} \Big)
        : 2^{\kappa_s} < n 
		\Big)
        \Big\|_{\ell^p} \\
		&\qquad\qquad\lesssim
		\sum_{n = 2^{\kappa_s}+1}^\infty
		\Big\|
        \calF^{-1}\Big(\big((\frkY_{N_n} -\frkY_{N_{n-1}}) \Xi_{n, s}^{\beta} 
		- \Omega_{n, s}^\beta \big) \hat{f} \Big)
        \Big\|_{\ell^p} \\ 
		&\qquad\qquad\lesssim
		\Big(\sum_{n = 2^{\kappa_s}+1}^\infty
		2^{-\chi c_p \sqrt{\log N_n}} \Big)\|f\|_{\ell^p}.
	\end{align*}
	Hence,
	\begin{align*}
		&
		\Big\|
		V_r\Big(
		\sum_{j = 2^{\kappa_s}+1}^n
        \calF^{-1}\big((\frkY_{N_j} - \frkY_{N_{j-1}}) \Xi_{j, s}^{\beta} \hat{f} \big)
		: 2^{\kappa_s} < n
		\Big)
        \Big\|_{\ell^p} \\
		&\qquad\qquad\lesssim
		\|f\|_{\ell^p}
		+
		\Big\|
		V_r\Big(
		\sum_{j = 2^{\kappa_s}+1}^n
        \calF^{-1}\big(\Omega_{j, s}^\beta \hat{f} \big)
        : 2^{\kappa_s} < n
		\Big)
        \Big\|_{\ell^p},
	\end{align*}	
	and the proof is reduced to showing the following claim.
	\begin{claim}
		For each $\beta \in \NN$ and $p \in (1, \infty)$ there is $C > 0$ such that for all $r \in (2, \infty)$,
		and $s \in \NN_0$,
		\[
			\Big\|
			V_r\Big(
			\sum_{j = 2^{\kappa_s}+1}^n
			\calF^{-1}\big(\Omega_{j, s}^\beta \hat{f} \big)
			: 
			2^{\kappa_s} < n
			\Big)
			\Big\|_{\ell^p}
			\leq
			C \frac{r}{r-2} \log (s+2)
			\|f\|_{\ell^p}.
		\]
	\end{claim}
	For any $a/q \in \mathscr{R}_s^\beta$, $x \in \ZZ^k$ and $m \in \NN_{Q_s}^k$, we have
	\begin{align*}
		&\calF^{-1}\Big(\big(\Upsilon_{N_n}(\cdot -a/q) - \Upsilon_{N_{n-1}}(\cdot - a/q) \big) 
		\varrho_s(\cdot - a/q) \hat{f}\Big)(Q_s x + m) \\
		&\qquad\qquad=
		\calF^{-1}\Big(\big(\Upsilon_{N_n} - \Upsilon_{N_{n-1}}\big) 
		\varrho_s \hat{f}(\cdot + a/q)\Big)(Q_s x+m) e^{-2\pi i \sprod{(a/q)}{m}}. 
	\end{align*}
	Therefore,
	\begin{align*}
		&\Big\|
		V_r\Big(
		\sum_{j = 2^{\kappa_s}+1}^n
		\calF^{-1}\big(
		\Omega_{j, s}^\beta \hat{f}
		\big)
		: 2^{\kappa_s} < n
		\Big)
		\Big\|_{\ell^p}^p \\
		&\qquad\qquad=
		\sum_{m \in \NN_{Q_s}^k}
		\Big\|
		V_r\Big(
		\sum_{j = 2^{\kappa_s}+1}^n
        \calF^{-1}\Big(\big(\Upsilon_{N_j} - \Upsilon_{N_{j-1}}\big) \varrho_s F(\cdot; m)\Big) (Q_s x + m) 
		: 2^{\kappa_s} < n
		\Big)	
		\Big\|_{\ell^p(x)}^p,
	\end{align*}
	where
	\[
		F(\xi; m) = \sum_{a/q \in \mathscr{R}_s^\beta} G(a/q) \hat{f}(\xi + a/q) e^{-2\pi i \sprod{(a/q)}{m}}.
	\]
	By \cite[Proposition 4.2]{mst1} (see also \cite[Proposition 3.2]{mt3}), we can write
	\begin{align*}
		&\sum_{m \in \NN_{Q_s}^k}
        \Big\|
        V_r\Big(
		\sum_{j = 2^{\kappa_s}+1}^n
        \calF^{-1}\Big(\big(\Upsilon_{N_j} - \Upsilon_{N_{j-1}}\big) 
		\varrho_s F(\cdot; m)\Big) (Q_s x + m) : 2^{\kappa_s} < n \Big)
        \Big\|_{\ell^p(x)}^p\\
		&\qquad\qquad\leq
		\bigg(\frac{C r}{r-2}\bigg)^p
		\Big\|
		\sum_{a/q \in \mathscr{R}_s^\beta} G(a/q) \calF^{-1}\big(
		\varrho_s(\cdot - a/q) \hat{f}\big)
		\Big\|_{\ell^p}^p.
	\end{align*}
	Therefore, the problem is reduced to showing
	\begin{equation}
		\label{eq:70}
		\Big\|\sum_{a/q \in \mathscr{R}_s^\beta} G(a/q) \calF^{-1} \big(\varrho_s(\cdot - a/q) \hat{f} \big)
		\Big\|_{\ell^p}
		\leq
		C \log (s+2) \|f\|_{\ell^p}.
	\end{equation}
	For the proof, let $N = \lfloor e^{(s+1)^{\rho/10}} \rfloor+1$ and $J = 2^N$. We write
	\[
		\begin{aligned}
		\Big\|\sum_{a/q \in \mathscr{R}_s^\beta} G(a/q) \calF^{-1} \big(\varrho_s(\cdot - a/q) \hat{f} \big)
        \Big\|_{\ell^p}
        &\leq
		\Big\|\sum_{a/q \in \mathscr{R}_s^\beta} 
		\calF^{-1} \Big( \big( \frkM_J - G(a/q)\big) \varrho_s(\cdot - a/q) \hat{f} \Big)
        \Big\|_{\ell^p} \\
		&\phantom{\leq}+
		\Big\|\sum_{a/q \in \mathscr{R}_s^\beta} 
		\calF^{-1} \big( \frkM_J \varrho_s(\cdot - a/q) \hat{f} \big)
        \Big\|_{\ell^p}.
		\end{aligned}
	\]
	In view of \eqref{eq:10}, we have
	\begin{align}
		\nonumber
		\Big\|\sum_{a/q \in \mathscr{R}_s^\beta}
        \calF^{-1} \big( \frkM_J \varrho_s(\cdot - a/q) \hat{f} \big)
        \Big\|_{\ell^p}
		&\leq
		\Big\|\sum_{a/q \in \mathscr{R}_s^\beta}
        \calF^{-1} \big(\varrho_s(\cdot - a/q) \hat{f} \big)
        \Big\|_{\ell^p} \\
		\label{eq:69}
		&\lesssim \log (s+2) \|f\|_{\ell^p}.
	\end{align}
	Next, we have the following trivial bound
	\begin{equation}
		\label{eq:67}
		\begin{aligned}
		\Big\|\sum_{a/q \in \mathscr{R}_s^\beta}
        \calF^{-1} \Big( \big( \frkM_J - G(a/q)\big) \varrho_s(\cdot - a/q) \hat{f} \Big)
        \Big\|_{\ell^p}
		&\leq
		e^{(d+1)(s+1)^{\rho/10}} \|f\|_{\ell^p} \\
		&\lesssim
		(\log J)^{d+1} \|f\|_{\ell^p}.
		\end{aligned}
	\end{equation}
	We want to improve the above estimate for $p =2$. We have
	\begin{equation}
		\label{eq:93}
		\begin{aligned}
		&
		\Big\|\sum_{a/q \in \mathscr{R}_s^\beta}
        \calF^{-1} \Big( \big( \frkM_J - G(a/q)\big) \varrho_s(\cdot - a/q) \hat{f} \Big)
        \Big\|_{\ell^2}^2 \\
		&\qquad\qquad=
		\sum_{a/q \in \mathscr{R}_s^\beta}
		\int_{\TT^d}
		\Big|\frkM_J(\xi) - G(a/q) \Big|^2 \varrho_s(\xi - a/q)^2 |\hat{f}(\xi)|^2
		{\: \rm d}\xi.
		\end{aligned}
	\end{equation}
	Since each fraction $a/q$ belonging to $\mathscr{R}_s^\beta$ has its denominator
	$q \leq e^{(s+1)^{\rho/10}} \leq \log J$, by Proposition \ref{prop:1},
	\begin{equation}
		\label{eq:66}
		\begin{aligned}
		\big(\frkM_J(\xi) - G(a/q)\big) \varrho_s(\xi - a/q) 
		&=
		G(a/q) \big(\Phi_J(\xi - a/q) - 1\big) \varrho_s(\xi - a/q) \\
		&\phantom{=}+
		\calO\Big(\exp\big(-c\sqrt{\log J}\big)\Big).
		\end{aligned}
	\end{equation}
	If $\varrho_s(\xi - a/q) > 0$ then
	\[
		\bigg|\xi_\gamma - \frac{a_\gamma}{q} \bigg| \leq Q_{s+1}^{-3d \abs{\gamma}}
		\leq
		J^{-2 \abs{\gamma}},
		\qquad\text{for all }\gamma \in \Gamma, 
	\]
	thus, by \eqref{eq:26}, we get
	\[
		\big|\Phi_J(\xi - a/q) - 1\big| \lesssim \big|J^A(\xi - a/q) \big|_{\infty}
		\lesssim J^{-1}.
	\]
	Hence, \eqref{eq:66} takes the following form
	\[
		\big(\frkM_J(\xi) - G(a/q)\big) \varrho_s(\xi - a/q)
		=
		\calO\Big(\exp\big(-c\sqrt{\log J}\big)\Big).
	\]
	Therefore, by \eqref{eq:93}, we get 
	\begin{equation}
		\label{eq:68}
		\Big\|\sum_{a/q \in \mathscr{R}_s^\beta}
        \calF^{-1} \Big( \big( \frkM_J - G(a/q)\big) \varrho_s(\cdot - a/q) \hat{f} \Big)
        \Big\|_{\ell^2}
		\lesssim
		e^{-c \sqrt{\log J}} 
		\|f\|_{\ell^2}.
	\end{equation}
	Now, interpolating \eqref{eq:67} with \eqref{eq:68}, we obtain
	\[
		\Big\|\sum_{a/q \in \mathscr{R}_s^\beta}
        \calF^{-1} \Big( \big( \frkM_J - G(a/q)\big) \varrho_s(\cdot - a/q) \hat{f} \Big)
        \Big\|_{\ell^p}
		\lesssim
		\|f\|_{\ell^p},
	\]
	which together with \eqref{eq:69} implies \eqref{eq:70}, and the proof of the theorem is completed.
\end{proof}

\begin{theorem}
	\label{thm:5}
	For each $p \in (1, \infty)$ and $\rho \in (0, 1)$, there is $C > 0$ such that for all $r \in (2, \infty)$
	and any finitely supported function $f: \ZZ^d \rightarrow \CC$,
	\[
		\big\|
		V_r\big(Y_{N_n} f : n \in \NN_0 \big) \big\|_{\ell^p}
		\leq
		C \frac{r}{r-2} 
		\|f\|_{\ell^p},
    \]
	where $N_n = \big\lfloor 2^{n^\rho} \big\rfloor$.
\end{theorem}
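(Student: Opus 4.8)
The plan is to feed the decomposition \eqref{eq:54} into the estimates assembled above in this section. First I would dispose of the highly oscillating piece: interpolating \eqref{eq:72} against \eqref{eq:73}, exactly as in the lines following \eqref{eq:73}, and taking $\beta > \beta_{p, 2\rho^{-1}}$ gives \eqref{eq:56}, so the second term on the right-hand side of \eqref{eq:54} is already bounded by $C\|f\|_{\ell^p}$. It then remains to control the first term on the right-hand side of \eqref{eq:54}, and by \eqref{eq:45} this reduces to showing that there is $C > 0$, independent of $s$, $r$ and $f$, such that for every $s \in \NN_0$
\[
	\Big\|V_r\Big(\sum_{j = s+1}^n \calF^{-1}\big((\frkY_{N_j} - \frkY_{N_{j-1}}) \Xi_{j, s}^{\beta} \hat{f}\big) : s < n\Big)\Big\|_{\ell^p}
	\leq C (s+1)^{-2} \frac{r}{r-2} \|f\|_{\ell^p};
\]
summing the convergent series $\sum_{s \geq 0}(s+1)^{-2}$ would then complete the proof.

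To prove the displayed per-$s$ bound I would split the index set $\{n \in \NN : n > s\}$ at $2^{\kappa_s}$. Since $V_r$ is subadditive over adjacent index intervals and invariant under adding a constant sequence, the left-hand side is dominated by a \emph{small scale} contribution $V_r(\,\cdots : s < n \leq 2^{\kappa_s})$ plus a \emph{large scale} contribution, the latter being $V_r$ of the partial sums $\sum_{j = 2^{\kappa_s}+1}^n(\cdots)$ taken over $n > 2^{\kappa_s}$. For the small scales, Theorem \ref{thm:3} supplies the $\ell^2$ estimate $(s+1)^{-\delta\beta\rho + 2}\|f\|_{\ell^2}$ and Theorem \ref{thm:6} the $\ell^{p_1}$ estimate $(s+1)\log(s+2)\|f\|_{\ell^{p_1}}$ valid for \emph{every} $p_1 \in (1, \infty)$, both uniform in $r$; for the large scales, Theorem \ref{thm:4} gives the $\ell^2$ estimate $\frac{r}{r-2}(s+1)^{-\delta\beta\rho}\|f\|_{\ell^2}$ and Theorem \ref{thm:7} the $\ell^{p_1}$ estimate $\frac{r}{r-2}\log(s+2)\|f\|_{\ell^{p_1}}$ for every $p_1 \in (1, \infty)$. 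Since $V_r(\cdot)$ is a norm on an appropriate sequence space, the operator carrying $f$ to $\big(\calF^{-1}((\frkY_{N_j} - \frkY_{N_{j-1}}) \Xi_{j, s}^{\beta} \hat{f})\big)_j$ is linear with values in the associated vector-valued $\ell^p\big(\ZZ^d\big)$ space, so in each regime I would interpolate the $\ell^2$ bound against the $\ell^{p_1}$ bound, choosing $p_1$ so that $p$ lies strictly between $2$ and $p_1$. This yields an $\ell^p$ bound of the shape $C(s+1)^{-(1-\theta)\delta\beta\rho + \calO(1)}(\log(s+2))^{\theta}$, multiplied by $\frac{r}{r-2}$ in the large-scale regime, where the interpolation exponent $\theta = \theta(p) \in (0, 1)$ is fixed once $p_1$ is chosen; for $p = 2$ no interpolation is needed and the exponent $-\delta\beta\rho + 2$ is read off directly.

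The final step would be to fix $\beta \in \NN$ once and for all, large enough in terms of $p$, of $\delta$ from Theorem \ref{thm:2}, and of $\rho$, so that $\beta$ exceeds $\beta_{p, 2\rho^{-1}}$ and makes every exponent of $(s+1)$ occurring above at most $-2$, which simultaneously absorbs the logarithmic factors. This is permissible because $\beta$ was left as a free parameter of the partition of unity at the start of Section \ref{sec:10}. The bulk of the work — and hence what I expect to be the main obstacle — has already been carried out in Theorems \ref{thm:3}--\ref{thm:7}, where the weaker asymptotic \eqref{eq:33} coming from the Siegel--Walfisz theorem and the Gaussian-sum decay of Theorem \ref{thm:2} were turned into the scale-separated estimates invoked here, at the price in the small-scale regime of an extra factor $\kappa_s^2$ that the gain $(s+1)^{-\delta\beta\rho}$ swallows for $\beta$ large. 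At the level of the present theorem the only point to watch is that the interpolation of these $\ell^p$-valued variational bounds is legitimate and that the $\frac{r}{r-2}$ factor — uniform in the $\ell^2$ and $\ell^{p_1}$ large-scale estimates, absent in the small-scale ones — passes through interpolation, which it does since it is an $r$-dependent constant multiplying an $r$-independent linear estimate.
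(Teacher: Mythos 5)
Your proposal is correct and follows essentially the same route as the paper: reduce via \eqref{eq:54}, \eqref{eq:56} and \eqref{eq:45} to a per-$s$ bound, split at $2^{\kappa_s}$, interpolate Theorem \ref{thm:3} with Theorem \ref{thm:6} and Theorem \ref{thm:4} with Theorem \ref{thm:7}, take $\beta$ large (and $\beta > \beta_{p,2\rho^{-1}}$), and sum $(s+1)^{-2}$ over $s$. Your explicit choice of an auxiliary exponent $p_1$ with $p$ strictly between $2$ and $p_1$, and the remark on vector-valued interpolation of the $V_r$-valued linear operator, only make precise what the paper's "by interpolation" leaves implicit.
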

\begin{proof}
	In view of \eqref{eq:54}, \eqref{eq:56} and \eqref{eq:45}, we have
	\[
		\begin{aligned}
		&
		\Big\|
		V_r\Big(\sum_{j = 1}^n \calF^{-1}\Big(\big(\frkY_{N_j}-\frkY_{N_{j-1}}\big) \hat{f} \Big): n \in \NN\big)
		\Big\|_{\ell^p} \\
		&\qquad\qquad\leq
		C_{p, \beta} \|f\|_{\ell^p} +
		\sum_{s = 0}^\infty 
		\Big\|
		V_r\Big(\sum_{j = s+1}^n \calF^{-1}\Big(\big(\frkY_{N_j}-\frkY_{N_{j-1}}\big) \Xi^{\beta}_{j, s} 
		\hat{f} \Big)
		: s < n
		\Big)
        \Big\|_{\ell^p},
		\end{aligned}
	\]
	provided $\beta > \beta_{p, 2 \rho^{-1}}$. Next, we split the index set
	\begin{align*}
		&
		\Big\|V_r\Big( \sum_{j = s}^n \calF^{-1}\Big(\big(\frkY_{N_j} - \frkY_{N_{j-1}}\big)
		\Xi^{\beta}_{j, s} \hat{f} \big)
		: s < n
		\Big)
        \Big\|_{\ell^p} \\
		&\qquad\qquad\lesssim
		\Big\|V_r\Big(
		\sum_{j = s+1}^n
		\calF^{-1}\Big(\big(\frkY_{N_j} - \frkY_{N_{j-1}}\big) \Xi^{\beta}_{j, s} \hat{f} \Big)
		: s < n \leq 2^{\kappa_s}
		\Big)
        \Big\|_{\ell^p} \\
		&\qquad\qquad\phantom{\lesssim}+
		\Big\|
		V_r\Big(
		\sum_{j = 2^{\kappa_s}+1}^n
		\calF^{-1}\Big(
		\big(\frkY_{N_j} - \frkY_{N_{j-1}}\big) \Xi^{\beta}_{j, s} \hat{f} \Big)
		: 2^{\kappa_s} < n 
		\Big)
        \Big\|_{\ell^p}.
	\end{align*}
	By interpolation between Theorem \ref{thm:3} and Theorem \ref{thm:6}, and between Theorem \ref{thm:4} and
	Theorem \ref{thm:7}, for $\beta$ sufficiently larger we get
	\[
		\Big\|
		V_r\Big(
		\sum_{j = s+1}^n
		\calF^{-1}\Big(\big(\frkY_{N_j} - \frkY_{N_{j-1}}\big) \Xi^{\beta}_{j, s} \hat{f} \Big)
		:
		s < n \leq 2^{\kappa_s}
		\Big)
		\Big\|_{\ell^p}
		\leq
		C_p
		(s+1)^{-2} \|f\|_{\ell^p},
	\]
	and
	\[
		\Big\|
		V_r\Big(
		\sum_{j = 2^{\kappa_s}+1}^n
		\calF^{-1}\Big(\big(\frkY_{N_j} - \frkY_{N_{j-1}}\big) \Xi^{\beta}_{j, s} \hat{f} \Big)
		: 2^{\kappa_s} < n
		\Big)
		\Big\|_{\ell^p}
		\leq
		C_p \frac{r}{r-2}
		(s+1)^{-2} \|f\|_{\ell^p},
	\]
	and the theorem follows.
\end{proof}

\subsection{Short variations} 
\label{sec:5}

\begin{theorem}
	\label{thm:8}
	For each $p \in (1, \infty)$ there are $\rho \in (0, 1)$ and  $C > 0$ such that for all $r \in (2, \infty)$
	and any finitely supported function $f : \ZZ^d \rightarrow \CC$, we have
	\[
		\Big\|
		\Big(
		\sum_{n \geq 0}
		V_r\big( \big(Y_N - Y_{N_n}\big)f : N \in [N_n, N_{n+1})\big)^r
		\Big)^{1/r}
		\Big\|_{\ell^p}
		\leq
		C
		\|f\|_{\ell^p}.
	\]
\end{theorem}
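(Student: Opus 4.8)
The plan is first to eliminate $r$. Since $r\mapsto V_r$ is non-increasing and $r>2$, we have $V_r\le V_2$; and since $\bigl(\sum_n x_n^r\bigr)^{1/r}\le\bigl(\sum_n x_n^2\bigr)^{1/2}$ for $r\ge 2$ and $x_n\ge 0$, while $V_r\bigl((Y_N-Y_{N_n})f:N\in[N_n,N_{n+1})\bigr)=V_r\bigl(Y_Nf:N\in[N_n,N_{n+1})\bigr)$ because subtracting the value at the left endpoint does not change the variation, it will suffice to prove, uniformly in $r$,
\[
	\Bigl\|\Bigl(\sum_{n\ge 0}V_2\bigl(Y_Nf:N\in[N_n,N_{n+1})\bigr)^2\Bigr)^{1/2}\Bigr\|_{\ell^p}\le C\|f\|_{\ell^p}.
\]
The parameter $\rho$ enters because the block $[N_n,N_{n+1})$ has length $L_n:=N_{n+1}-N_n\simeq N_n\,n^{\rho-1}$, which for small $\rho$ is negligible compared with $N_n$; I will fix $\rho$ small (depending on $d$, $k''$ and on $p$) only at the very end.

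Next I would apply the Rademacher--Menshov inequality, Lemma \ref{lem:1}, inside each block. With $s_n:=\lceil\log_2 L_n\rceil$ and $d^{\,n,i}_j$ denoting the nodes of the dyadic grid of mesh $2^i$ in $[N_n,N_{n+1}]$, Lemma \ref{lem:1} gives
\[
	V_2\bigl(Y_Nf:N\in[N_n,N_{n+1})\bigr)\le\sqrt2\sum_{i=0}^{s_n}\Bigl(\sum_j\bigl|Y_{d^{\,n,i}_{j+1}}f-Y_{d^{\,n,i}_j}f\bigr|^2\Bigr)^{1/2},
\]
so after Minkowski's inequality in $\ell^2(n)$ it is enough to show $\sum_{i\ge 0}\|\mathcal R_i f\|_{\ell^p}\le C\|f\|_{\ell^p}$, where
\[
	\mathcal R_i f:=\Bigl(\sum_{n\,:\,s_n\ge i}\ \sum_j\bigl|Y_{d^{\,n,i}_{j+1}}f-Y_{d^{\,n,i}_j}f\bigr|^2\Bigr)^{1/2}.
\]

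To estimate $\mathcal R_i$, write $\varpi_N$ for the convolution kernel of $Y_N$ (so $\varpi_N=m_N$ or $\varpi_N=h_N$); then $\bigl|Y_{M'}f-Y_Mf\bigr|\le\bigl(\sum_{M\le N<M'}|\varpi_{N+1}-\varpi_N|\bigr)*|f|$, and the computations behind Lemma \ref{lem:2} and Lemma \ref{lem:5}, together with $\vartheta_B(N)\simeq|B|N^k$ from \eqref{eq:31} and a trivial bound on $\vartheta_B(M')-\vartheta_B(M)$, yield $\bigl\|\sum_{d^{\,n,i}_j\le N<d^{\,n,i}_{j+1}}|\varpi_{N+1}-\varpi_N|\bigr\|_{\ell^1}\lesssim 2^iN_n^{-1}(\log N_n)^{O(1)}$. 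For $p\ge 2$, Minkowski's inequality (passing from $\ell^p(\ell^2)$ to $\ell^2(\ell^p)$) followed by Young's convolution inequality gives
\[
	\|\mathcal R_i f\|_{\ell^p}\le\Bigl(\sum_{n\,:\,s_n\ge i}\ \sum_j\bigl\|\textstyle\sum_{d^{\,n,i}_j\le N<d^{\,n,i}_{j+1}}|\varpi_{N+1}-\varpi_N|\bigr\|_{\ell^1}^2\Bigr)^{1/2}\|f\|_{\ell^p}=:\varepsilon_i\,\|f\|_{\ell^p},
\]
and since block $n$ supplies $\simeq L_n/2^i$ summands, one computes $\varepsilon_i^2\simeq\sum_{n\,:\,s_n\ge i}2^iL_nN_n^{-2}(\log N_n)^{O(1)}\lesssim i^{\,O(1)-1/\rho}$ (the sum being dominated by $n\simeq i^{1/\rho}$), which is summable in $i$ as soon as $\rho$ is small enough. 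Replacing the $\ell^1$-norm by the smaller $\ell^2\to\ell^2$ operator norm of the same convolution gives $\|\mathcal R_i\|_{\ell^2\to\ell^2}\lesssim\varepsilon_i$, hence also the case $p=2$.

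The delicate point is $1<p<2$, since for such $p$ the above size estimates on the kernels no longer suffice: one cannot pass $\ell^p$ inside the square function, and the positive kernels $\sum_N|\varpi_{N+1}-\varpi_N|$, summed over all blocks, have divergent total $\ell^1$-mass, so dominating $\mathcal R_i$ by a maximal function destroys the decay in $i$. Here I would interpolate the bound $\|\mathcal R_i\|_{\ell^2\to\ell^2}\lesssim\varepsilon_i$ against an estimate for a fixed exponent below $2$ obtained by re-running, on each single block, the scheme of Section \ref{sec:10}: the partition of unity $\Xi^\beta_n=\sum_{a/q\in\mathscr{U}_{\lfloor n^\rho\rfloor}^\beta}\eta_n(\cdot-a/q)$ splits $\frkY_{N'}-\frkY_N$ into a piece supported near the rationals of $\mathscr{U}_{\lfloor n^\rho\rfloor}^\beta$—which after the Ionescu--Wainger inequality \eqref{eq:10} is transferred to the continuous square function of Proposition \ref{prop:4}—and a remainder small by Property \ref{pr:4}; as $\beta$ need not be taken large this costs only a factor polynomial in $i$, and interpolation against the decaying $\varepsilon_i$ (choosing $\rho$ small in terms of $p$) restores $\sum_i\|\mathcal R_i f\|_{\ell^p}\lesssim\|f\|_{\ell^p}$ and completes the proof. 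The main obstacle is precisely this: obtaining $\ell^p$ control for $p<2$ requires genuine cancellation in the multipliers $\frkY_{N'}-\frkY_N$, not merely size bounds on their kernels.
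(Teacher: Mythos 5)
There is a genuine gap, and it sits exactly where you flag "the delicate point": your treatment of $1<p<2$ is a sketch that does not close, and the obstacle you describe there is in fact self-inflicted. Your very first reduction replaces the outer $\ell^r(n)$-sum by an $\ell^2(n)$-sum and $V_r$ by $V_2$, i.e.\ you commit to proving the strictly stronger square-function bound $\|(\sum_n V_2(\cdot)^2)^{1/2}\|_{\ell^p}\lesssim\|f\|_{\ell^p}$. For $p\geq 2$ your argument (Rademacher--Menshov in each block, Minkowski into $\ell^2(\ell^p)$, Young with the kernel bounds behind Lemmas \ref{lem:2} and \ref{lem:5}) is essentially sound, though the Rademacher--Menshov layer is unnecessary. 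But for $p<2$ it is precisely the forced $\ell^2$-aggregation over blocks that makes size bounds appear insufficient, and your proposed repair --- re-running the long-variation scheme of Section \ref{sec:7} on each block and interpolating, "costing only a factor polynomial in $i$" --- is not substantiated: inside a block there are $\sim L_n2^{-i}$ differences $\frkY_{N+2^i(j+1)}-\frkY_{N+2^ij}$, and the per-difference errors of size $(\log N_n)^{-\alpha}$ coming from Property \ref{pr:4} (or Properties \ref{pr:5}--\ref{pr:6}) get multiplied by $(L_n2^{-i})^{1/2}$ in the square function, which destroys the estimate; nothing in your sketch addresses this, and your assertion that the $p<2$ case "requires genuine cancellation in the multipliers" is simply false.

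The paper's proof shows no cancellation is needed at all, and it is a few lines. Set $u=\min\{2,p\}$. Inside each block use monotonicity, $V_r\big(Y_Nf:N\in[N_n,N_{n+1})\big)\leq\sum_{N=N_n}^{N_{n+1}-1}|Y_{N+1}f-Y_Nf|$, and over the blocks use $\ell^r(n)\hookrightarrow$ majorization by $\ell^u(n)$ (since $r>2\geq u$) together with Minkowski's inequality, legitimate because $u\leq p$; this gives the bound $\big(\sum_n\big\|\sum_{N=N_n}^{N_{n+1}-1}|Y_{N+1}f-Y_Nf|\big\|_{\ell^p}^u\big)^{1/u}$. By Property \ref{pr:3} (i.e.\ \eqref{eq:28}, which is a pure $\ell^1$-kernel/Young estimate valid for all $p\in[1,\infty)$) each summand is $\lesssim n^{\rho-1}\|f\|_{\ell^p}$, and $\sum_n n^{u(\rho-1)}<\infty$ as soon as $\rho<(u-1)/u$. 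This is exactly why the theorem only claims the existence of some $\rho=\rho(p)$: for $1<p<2$ one takes $u=p$ and $\rho<(p-1)/p$, keeping $\ell^p$-aggregation over blocks instead of $\ell^2$, and the same size bounds you already use for $p\geq2$ finish the proof. To fix your write-up, either drop the initial passage to $\ell^2(n)$ and $V_2$ and follow this route, or supply a genuine proof of the per-block square-function estimate for $p<2$, which is a substantially harder task than the theorem requires.
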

\begin{proof}
	Let $u = \min\{2, p\}$. By monotonicity and Minkowski's inequality, we get
	\begin{align*}
		\Big\|
        \Big(\sum_{n \geq 0} V_r\big(Y_N f : N \in [N_n, N_{n+1}) \big)^r
        \Big)^{\frac{1}{r}}
        \Big\|_{\ell^p}
        \leq
        \Big\|
        \Big(
        \sum_{n \geq 0}
        \Big(
        \sum_{N = N_n}^{N_{n+1}-1} 
		\big|Y_{N+1} f - Y_N f \big|
		\Big)^u
		\Big)^{\frac{1}{u}}
		\Big\|_{\ell^p} \\
		\leq
        \Big(\sum_{n \geq 0}
		\Big\|
		\sum_{N = N_n}^{N_{n+1}-1} 
		\big|Y_{N+1}f - Y_N f \big|
		\Big\|_{\ell^p}^u
        \Big)^{\frac{1}{u}},
        \end{align*}
        which together with \eqref{eq:28} gives
        \[
        	\Big\|
        	\Big(\sum_{n \geq 0} V_r\Big(Y_N f : N \in [N_n, N_{n+1}) \Big)^r
        	\Big)^{\frac{1}{r}}
        	\Big\|_{\ell^p}
        	\lesssim
            \Big(\sum_{n \geq 1} n^{-u(1-\rho)} \Big)^{\frac{1}{u}} 
			\|f\|_{\ell^p}.
        \]
		which is bounded whenever $0 < \rho < \frac{u-1}{u}$. 
\end{proof}

\begin{bibliography}{discrete}
	\bibliographystyle{amsplain}
\end{bibliography}

\end{document}